\documentclass[11pt]{amsart}

\usepackage{amssymb}
\usepackage{bm}
\usepackage[centertags]{amsmath}
\usepackage{amsfonts}
\usepackage{amsthm}
\usepackage{mathrsfs}
\usepackage{mathabx}

\theoremstyle{definition}
\newtheorem{thm}{Theorem}[section]
\newtheorem{dfn}[thm]{Definition}
\newtheorem{lem}[thm]{Lemma}
\newtheorem{prp}[thm]{Proposition}
\newtheorem{cor}[thm]{Corollary}
\newtheorem{rmk}[thm]{Remark}

\newtheorem*{thm*}{Theorem}

\newtheorem*{cor*}{Corollary}

\newtheorem*{prp*}{Proposition}

\newtheorem*{rmk*}{Remark}

\newtheorem*{ntt}{Notation}

\newtheorem*{qst}{Question}

\newcommand{\inn}{\in\mathbb{N}}

\newcommand{\e}{\varepsilon}
\newcommand{\al}{\alpha}

\newcommand{\la}{\lambda}

\newcommand{\be}{\beta}

\newcommand{\si}{\sigma}
\newcommand{\cantor}{\{0,1\}^{\mathbb{N}}}
\newcommand{\ceq}{\sqsubseteq}
\newcommand{\cneq}{\varsqsubsetneq}

\newcommand{\varmin}{\widetilde{\min}}
\newcommand{\varmax}{\widetilde{\max}}

\newcommand{\cont}{2^{\aleph_0}}

\newcommand{\X}{\mathfrak{X}_{2^{\aleph_0}}}

\long\def\symbolfootnote[#1]#2{\begingroup%
\def\thefootnote{\fnsymbol{footnote}}\footnote[#1]{#2}\endgroup}

\begin{document}

\title[$\al$-Large Families]{$\al$-Large Families and Applications to Banach Space Theory}

\author[S. A. Argyros, P. Motakis]{Spiros A. Argyros and Pavlos
Motakis}
\address{National Technical University of Athens, Faculty of Applied Sciences,
Department of Mathematics, Zografou Campus, 157 80, Athens,
Greece} \email{sargyros@math.ntua.gr, pmotakis@central.ntua.gr}

\maketitle

\symbolfootnote[0]{\textit{2010 Mathematics Subject
Classification:} Primary 46B03, 46B06, 46B26, 03E05}

\symbolfootnote[0]{\textit{Key words:} Spreading models, Reflexive spaces, Non separable Banach spaces, Large families}

\symbolfootnote[0]{Research supported by API$\Sigma$TEIA program/1082.}

\begin{abstract}
The notion of $\al$-large families of finite subsets of an infinite set is defined for every countable ordinal number $\al$, extending the known notion of large families. The definition of the $\al$-large families is based on the transfinite hierarchy of the Schreier families $\mathcal{S}_\al, \al<\omega_1$. We prove the existence of such families on the cardinal number $2^{\aleph_0}$ and we study their properties. As an application, based on those families we construct a reflexive space $\X^\al$, $\al<\omega_1$ with density the continuum, such that every bounded non norm convergent sequence $\{x_k\}_k$ has a subsequence generating $\ell_1^\al$ as a spreading model.
\end{abstract}

\section*{Introduction}

One of the most significant examples of Banach spaces is Tsirelson space (see \cite{FJ}, \cite{T}), presented in the seventies. The main property of this space, is that it fails to contain a copy of $c_0$ or $\ell_p$, answering in the negative a problem posed by Banach. It is still an open problem whether there exist Tsirelson type spaces in the non-separable setting. A version of this problem has recently been solved in the negative direction in \cite{LT}, namely it is shown that spaces spanned by an uncountable basic sequence such that their norm satisfies an implicit formula, similar to the one of Tsirelson space (see \cite{FJ}), always contains a copy of $c_0$ or $\ell_p$. To be more precise, if $\kappa$ is an uncountable ordinal number, $\mathcal{B}$ is a hereditary and compact family of finite subsets of $\kappa$, $0<\theta<1$ is a real number, and $\|\cdot\|_{\theta,\mathcal{B}}$ is the unique norm defined on $c_{00}(\kappa)$ satisfying the following implicit formula

\begin{equation*}
\|x\|_{\theta,\mathcal{B}} = \max\big\{\|x\|_\infty,\;\sup\{\theta\sum_{i=1}^n\|E_ix\|_{\theta,\mathcal{B}}:\;\{E_i\}_{i=1}^d\;\text{is}\;\mathcal{B}-\text{admissible}\}\big\}
\end{equation*}
then the completion of $(c_{00}(\kappa),\|\cdot\|_{\theta,\mathcal{B}})$ contains a copy of $c_0$ or $\ell_p$.

As it seems not possible to have a non separable space, that strongly resembles Tsirelson space, a natural question is which properties of this space can be transferred to the non separable setting. Besides being reflexive, one of the main properties of Tsirelson space, is that it admits only $\ell_1$ as a spreading model, i.e. every bounded sequence without a norm convergent subsequence has a subsequence that generates a spreading model equivalent to the usual basis of $\ell_1$. The main goal of this paper is the construction of a non separable reflexive Banach space $\X$, with the aforementioned property.

\begin{thm*}
There exists a reflexive Banach space $\X$ generated by an unconditional basic sequence $\{e_\xi\}_{\xi<\cont}$, admitting only $\ell_1$ as a spreading model.
\end{thm*}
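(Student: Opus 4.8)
The plan is to construct the space $\X$ as a direct analogue of Tsirelson space, but over the index set $2^{\aleph_0}$ rather than $\mathbb{N}$, replacing the Schreier family by the $\al$-large compact families of finite subsets of the continuum whose existence is promised in the abstract. Concretely, I would fix an appropriate norming set $K \subseteq c_{00}(\cont)$ built from these large families together with a coefficient $\theta \in (0,1)$ (or a sequence of such, in the mixed-Tsirelson fashion), defining $\|x\| = \sup\{f(x): f \in K\}$, and take $\X$ to be the completion of $(c_{00}(\cont),\|\cdot\|)$. The unit vectors $\{e_\xi\}_{\xi<\cont}$ form the claimed basic sequence, and unconditionality is immediate from the fact that $K$ is closed under restriction to subsets of the support (so that sign changes and coordinate projections are norm-nonincreasing).

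The argument then splits into two independent assertions. \emph{Reflexivity} I would establish via James' criterion for spaces with an unconditional basis: it suffices to show that $\{e_\xi\}_{\xi<\cont}$ is both boundedly complete and shrinking, equivalently that $\X$ contains no isomorphic copy of $c_0$ or $\ell_1$ spanned by a block sequence. The compactness of the families controls the $c_0$ behaviour from above, while the $\al$-largeness forces a lower $\ell_1^\al$-type estimate on any normalized block sequence; the standard Tsirelson-style combinatorial lemma showing that these two estimates are incompatible with embedding $c_0$ or $\ell_1$ is what yields reflexivity. For the \emph{spreading model} statement, let $\{x_k\}_k$ be a bounded sequence with no norm-convergent subsequence. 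Passing to a subsequence, I may assume $\{x_k\}_k$ is seminormalized and, by a standard sliding-hump and gliding argument using the long basis, that it is (up to small perturbation) a normalized block sequence with respect to $\{e_\xi\}_\xi$. The key is then the lower estimate: using that the supports of the blocks can be spread out and captured by the large families, one produces for each finite spread subset a functional in $K$ that is simultaneously large on the corresponding blocks, giving $\|\sum_{i} a_i x_{k_i}\| \geq c\sum_i |a_i|$ on a suitable subsequence. Combined with the trivial upper bound $\|\sum a_i x_{k_i}\| \leq C \sum|a_i|$, this shows every such subsequence generates $\ell_1$ as its spreading model.

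The main obstacle, and the genuinely new ingredient relative to the classical separable theory, lies in handling the \emph{uncountable} index set in the spreading-model argument. In the separable case one freely passes to subsequences supported on successive intervals of $\mathbb{N}$ and invokes the Schreier family's spreading property under the natural order; over $2^{\aleph_0}$ there is no such order compatible with the combinatorics, and the supports of the $x_k$ may accumulate in complicated ways. The crux is therefore to extract a subsequence whose supports are sufficiently ``independent'' with respect to the large families---this is precisely where the careful study of the properties of $\al$-large compact families (announced in the abstract) must be invoked, to guarantee that arbitrarily many spread blocks can be jointly dominated by a single norming functional. Once this combinatorial extraction is in place, the lower $\ell_1$ estimate and hence the whole theorem follows by the familiar computation; I expect the bulk of the technical work to be in proving and then applying that extraction lemma, rather than in the functional-analytic verifications of reflexivity and the spreading-model upper bound.
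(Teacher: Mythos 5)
Your plan has a fatal flaw at the very first step: the norm you propose, a Tsirelson-type implicit norm on $c_{00}(\cont)$ whose norming set is generated from a compact hereditary family on the continuum together with a constant $\theta\in(0,1)$, is precisely the construction that L\'opez-Abad and Todorcevic proved \emph{cannot} work -- the paper's introduction cites \cite{LT} for the fact that any such space contains a copy of $c_0$ or $\ell_p$, and each of these outcomes destroys either reflexivity or the uniqueness of $\ell_1$ as a spreading model. Your parenthetical mixed-Tsirelson fallback is not developed, and you give no reason it escapes this obstruction. A related error infects your reflexivity argument: you assert that $\al$-largeness ``forces a lower $\ell_1^\al$-type estimate on any normalized block sequence,'' but largeness only guarantees that inside every infinite subset of $\cantor$ there exists \emph{some} injection $\phi$ carrying $\mathcal{S}_\al$ into the family (Proposition \ref{alargechar}); an arbitrary block sequence may meet the family only in very small sets, and indeed in the paper the single-family spaces $X_n$ with norm $\|x\|_n=\sup\{|F(x)|:F\in\mathcal{G}_n\}$ are $c_0$-\emph{saturated} (Proposition \ref{prpc0sat}), the opposite of an $\ell_1$ lower estimate. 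So the ``standard incompatibility lemma'' you invoke does not exist in this setting.

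The paper's actual route is structurally different and exploits the $c_0$-saturation rather than fighting it: it takes the sup-norms $\|\cdot\|_n$ over the families $\mathcal{G}_n$ and interpolates them through the \emph{separable} Tsirelson space $T$ in the DFJP fashion \cite{DFJP}, setting $\|x\| = \|\sum_n 2^{-n}\|x\|_n e_n\|_T$. Reflexivity then follows because the maps $P_n:\X\rightarrow X_n$ are strictly singular, so every subspace contains a sequence with $\lim_k\|P_nx_k\|_n=0$ for all $n$, which by a sliding hump is equivalent to a block sequence of $T$ (Propositions \ref{pnzero}, \ref{prpTsat}) -- James' theorem then applies. For the spreading models there is a trichotomy on $\limsup_k\|x_k\|_\infty$ and $\limsup_k\|P_nx_k\|_n$, and the genuinely new combinatorial work -- what you correctly identify as the crux but leave entirely unproved -- occurs when some $\|P_{n_0}x_k\|_{n_0}$ stays bounded below: norming sets come as pairs $(F_k,\si_k)\in\mathscr{G}_{n_0}$, one passes to $\si_k\rightarrow\si$ and, splitting each $F_k$ according to whether $\si_k\wedge\tau=\si\wedge\tau$, extracts either a skipped or an attached branching configuration, which amalgamates the $F_k$ along any $G\in\mathcal{S}_m$ into a single member of $\mathcal{G}_{n_0+m}$ (Lemmas \ref{lemelloneskipped}, \ref{lemelloneattached}, \ref{lemFkattached} and Proposition \ref{prppnnotzero}). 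This extraction depends on the two-level bookkeeping of the pairs $(F,\si)$ with $\varmin$ and $\varmax$, which has no analogue in your setup; and under your Tsirelson-type norm the corresponding lower estimate would in any case fail, by \cite{LT}. In short: the definition of the norm must change, and the two lemmas carrying all the weight in your outline are exactly the ones that are absent.
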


The construction of this space is based on the notion of $\al$-large families, which is defined as follows. If $A$ is an infinite set, $\mathcal{B}$ is a hereditary and compact family of finite subsets of $A$ and $\al$ is a countable ordinal number, we say that $\mathcal{B}$ is $\al$-large, if its restriction on every infinite subset of $A$, in a certain sense, contains a copy of $\mathcal{S}_\al$, the Schreier family of order $\al$. Equivalently, if its restriction on every infinite subset of $A$, has Cantor-Bendixson index, greater than or equal to $\omega^\al + 1$. We prove the existence of such families on the cardinal number $\cont$, by constructing for $\al<\omega_1$, $\mathcal{G}_\al$ an $\al$-large, hereditary and compact family of finite subsets of $\cantor$. We believe that these families  are of independent interest, as they retain some of the most important properties of the families $\mathcal{S}_\al, \al<\omega_1$. They are therefore a generalization of the Schreier families, defined on the continuum and a study of them is included in the paper.

In the first section of the paper, we define the notion of $\al$-large families of finite subsets of an infinite set and a brief study of them is given.

The second section is devoted to the construction of the families $\{\mathcal{G}_\al\}_{\al<\omega_1}$. Initially, using the Schreier family $S_1$ and diagonalization,  we recursively define some auxiliary families $\mathscr{G}_\al, \al<\omega_1$, which are subsets of $[\cantor]^{<\omega}\times\cantor$. The construction method used, imposes strong Schreier like properties on the families $\mathcal{G}_\al$, which are in fact the projection of $\mathscr{G}_\al$, on the component $[\cantor]^{<\omega}$. Next, properties of these families, which are crucial for the proof of the main result are included, among others, the fact that for $\al<\omega_1$, $\mathcal{G}_\al$ is an $\al$-large, compact and hereditary family of finite subsets of $\cantor$. Some additional results concerning the similarity of the $\mathcal{G}_\al$ to the $\mathcal{S}_\al$, $\al<\omega_1$ are proven.

The third section is concentrated on the construction of the space $\X$. The first step is the definition of a sequence of spaces $\{(X_n,\|\cdot\|_n)\}_n$, each one based on the family $\mathcal{G}_n$. In particular, the norm of these spaces is defined on $c_{00}(\cont)$ in a similar manner as the norm of Schreier space is defined on $c_{00}(\mathbb{N})$ (see \cite{Sch})  and they all have the unit vector basis $\{e_\xi\}_{\xi<\cont}$ as an unconditional Schauder basis. For $n\inn$, the main two properties of the space $X_n$ are the following. Firstly, every subsequence of the basis admits only $\ell_1$ as a spreading model and secondly the space $X_n$ is $c_0$ saturated. Next, using the spaces $X_n, n\inn$ and Tsirelson space $T$, a norm is defined on $c_{00}(\cont)$, in the following manner. For $x\in c_{00}(\cont)$, set
\begin{equation*}
\|x\| = \|\sum_{n=1}^\infty\frac{1}{2^n}\|x\|_ne_n\|_T.
\end{equation*}
The completion of $c_{00}(\cont)$ with this norm is the desired space $\X$, which has the unit vector basis $\{e_\xi\}_{\xi<\cont}$ as an unconditional Schauder basis. The proof of the fact that this space admits only $\ell_1$ as a spreading model, relies on the study of the behavior of the $\|\cdot\|_n$ norms on a normalized weakly null sequence $\{x_k\}_k$ in $\X$. Moreover, using the fact that the spaces $X_n$ are $c_0$ saturated, we prove that every subspace of $\X$ contains a copy of a subspace of $T$, which yields that the space is reflexive.

The fourth and final section concerns the construction, for $\al<\omega_1$, of reflexive spaces $\X^\al$ having an unconditional Schauder basis with size $\cont$, admitting $\ell_1^\al$ as a unique spreading model. The construction method used is a variation of the one used for the space $\X$.

\section{$\alpha$-large families}

We introduce the notion of $\al$-large families which concerns the complexity of a family $\mathcal{B}$ of finite subsets of a given infinite set $A$. This notion extends the well known concept of large families and it is defined using the transfinite hierarchy of the Schreier families $\{\mathcal{S}_\al\}_{\al<\omega_1}$, first introduced in \cite{AA}. After providing the definition of $\al$-large families we also give a useful characterization linking this notion with the Cantor-Bendixson index of a compact and hereditary family of finite subsets of a given infinite set.

\begin{ntt}
Let $A$ be a set, $\mathcal{B}$ be a family of subsets of $A$, $B$ be a subset of $A$ and $k$ be a natural number. We define
\begin{equation*}
[B]^k = \{F\subset B: \#F = k\}
\end{equation*}
and
\begin{equation*}
\mathcal{B}\upharpoonright B = \{F\in\mathcal{B}:\; F\subset B\}.
\end{equation*}

If $\mathcal{F}$ is a family of subsets of the natural numbers, $L$ is an infinite subset of $\mathbb{N}$ and $\phi:\mathbb{N}\rightarrow L$ is the uniquely defined order preserving bijection, we define
\begin{equation*}
\mathcal{F}[L] = \{\phi(F):\; F\in\mathcal{F}\}.
\end{equation*}
\end{ntt}

\begin{dfn}
Let $A$ be an infinite set and $\mathcal{B}$ a family of finite subsets of $A$.
\begin{itemize}

\item[(i)] We say that $\mathcal{B}$ is large, if for every $k\inn$, and $B$ infinite subset of $A$, we have that $[B]^k\cap\mathcal{B}\neq\varnothing$.

\item[(ii)] Given a countable ordinal number $\alpha$, we say that $\mathcal{B}$ is $\alpha$-large, if for every $B$ infinite subset of $A$, there exists a one to one map $\phi:\mathbb{N}\rightarrow B$, such that $\phi(F)\in\mathcal{B}$, for every $F\in\mathcal{S}_\alpha$.

\end{itemize}
\end{dfn}

\begin{rmk}
Using Ramsey theorem and a simple diagonalization argument, it is easy to see that $\mathcal{B}$ is large, if and only if it is 1-large.
\end{rmk}

The following lemma is an easy consequence of Theorem 1 from \cite{G}.

\begin{lem}
Let $\mathcal{F}, \mathcal{G}$ be hereditary and compact families of finite subsets of the natural numbers, such that for every $L$ infinite subset of the natural numbers, the Cantor-Bendixson index of $\mathcal{F}\upharpoonright L$, is strictly smaller than the Cantor-Bendixson index of $\mathcal{G}\upharpoonright L$. Then for every $M$ infinite subset of the natural numbers, there exists $L$ a further infinite subset of $M$, such that  $\mathcal{F}\upharpoonright L\subset \mathcal{G}\upharpoonright L$.\label{lemgas}
\end{lem}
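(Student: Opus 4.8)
The plan is to deduce this directly from the Gasparis dichotomy (Theorem 1 of \cite{G}), which asserts that for any two hereditary families $\mathcal{F},\mathcal{G}$ of finite subsets of $\mathbb{N}$ and any infinite $M\subseteq\mathbb{N}$, there is a further infinite $L\subseteq M$ for which either $\mathcal{F}\upharpoonright L\subseteq\mathcal{G}$ or $\mathcal{G}\upharpoonright L\subseteq\mathcal{F}$. The entire content of the lemma is then to exclude the second alternative by means of the index hypothesis.

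First I would fix $M$ and apply the dichotomy to obtain $L\subseteq M$ realizing one of these two inclusions. I would then record the elementary but decisive observation that, since every member of $\mathcal{F}\upharpoonright L$ is by definition a subset of $L$, the inclusion $\mathcal{F}\upharpoonright L\subseteq\mathcal{G}$ is in fact equivalent to $\mathcal{F}\upharpoonright L\subseteq\mathcal{G}\upharpoonright L$; symmetrically, $\mathcal{G}\upharpoonright L\subseteq\mathcal{F}$ is equivalent to $\mathcal{G}\upharpoonright L\subseteq\mathcal{F}\upharpoonright L$. Thus the dichotomy really compares the two \emph{restricted} families, and its first alternative is precisely the conclusion we are after.

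To rule out the second alternative, suppose toward a contradiction that $\mathcal{G}\upharpoonright L\subseteq\mathcal{F}\upharpoonright L$. Both restricted families are again hereditary and compact, hence closed subsets of the Cantor space, so their transfinite Cantor--Bendixson derivatives are defined. Since the derivative operation is monotone under inclusion of closed sets, an inclusion $K_1\subseteq K_2$ passing to every derivative $K_1^{(\beta)}\subseteq K_2^{(\beta)}$, the index of $\mathcal{G}\upharpoonright L$ is at most that of $\mathcal{F}\upharpoonright L$. This directly contradicts the hypothesis that the index of $\mathcal{F}\upharpoonright L$ is strictly smaller than that of $\mathcal{G}\upharpoonright L$. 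Consequently the second alternative cannot occur, the first must hold, and we obtain $\mathcal{F}\upharpoonright L\subseteq\mathcal{G}\upharpoonright L$, as required.

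The argument is short, and the only thing needing a little care is the interface between the two formulations. The main (very mild) obstacle is to state the Gasparis dichotomy in exactly the form above and to confirm that the restricted families $\mathcal{F}\upharpoonright L,\mathcal{G}\upharpoonright L$ are genuinely compact, so that the Cantor--Bendixson index and its monotonicity are legitimately applicable; once these points are in place, the contradiction is immediate.
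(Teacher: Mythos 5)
Your proposal is correct and follows exactly the route the paper intends: the paper offers no argument beyond calling the lemma ``an easy consequence of Theorem 1 from \cite{G}'', and your application of the Gasparis dichotomy to obtain $L\subseteq M$, together with the observation that $\mathcal{G}\upharpoonright L\subseteq\mathcal{F}\upharpoonright L$ would force the Cantor--Bendixson index of $\mathcal{G}\upharpoonright L$ to be at most that of $\mathcal{F}\upharpoonright L$ (by monotonicity of derivatives under inclusion of compact sets), contradicting the hypothesis, is precisely the intended easy deduction. The care you take at the interface (restrictions being compact, and $\mathcal{F}\upharpoonright L\subseteq\mathcal{G}$ being equivalent to $\mathcal{F}\upharpoonright L\subseteq\mathcal{G}\upharpoonright L$) is sound and fills in exactly what the paper leaves implicit.
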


\begin{prp}
Let $A$ be an infinite set, $\mathcal{B}$ be a hereditary and compact family of finite subsets of $A$ and $\al$ be a countable ordinal number. Then, the following assertions are equivalent.

\begin{itemize}

\item[(i)] $\mathcal{B}$ is $\al$-large.

\item[(ii)] For every $B$ infinite subset of $A$, the Cantor-Bendixson index of $\mathcal{B}\upharpoonright B$ is greater than or equal to $\omega^\al + 1$.

\end{itemize}
\label{alargechar}
\end{prp}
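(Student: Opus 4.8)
The plan is to run both implications off two standard features of the Cantor--Bendixson index $i(\cdot)$ on hereditary compact families, regarded as closed subsets of $\{0,1\}^A$: it is monotone under inclusion of closed sets, and it is invariant under the homeomorphism of $\{0,1\}^A$ induced by relabelling coordinates along an injection. Together with the facts that $i(\mathcal{S}_\al)=\omega^\al+1$ and that, by the sandwich $\mathcal{S}_\al[L]\subseteq\mathcal{S}_\al\upharpoonright L\subseteq\mathcal{S}_\al$ plus relabelling invariance, $i(\mathcal{S}_\al\upharpoonright L)=\omega^\al+1$ for every infinite $L$, these are the only ingredients beyond Lemma \ref{lemgas}.

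For (i)$\Rightarrow$(ii) I would fix an infinite $B\subseteq A$ and let $\phi:\mathbb{N}\to B$ witness $\al$-largeness, so that $\phi(\mathcal{S}_\al)\subseteq\mathcal{B}\upharpoonright B$. Since $\phi$ is one to one, relabelling identifies $\phi(\mathcal{S}_\al)$ with $\mathcal{S}_\al$, so $\phi(\mathcal{S}_\al)$ is a closed subset of $\mathcal{B}\upharpoonright B$ of index $\omega^\al+1$, and monotonicity gives $i(\mathcal{B}\upharpoonright B)\geq\omega^\al+1$.

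The substance is (ii)$\Rightarrow$(i), which I would first reduce to $\mathbb{N}$: given infinite $B$, fix an injection $\psi:\mathbb{N}\to B$ and set $\mathcal{C}=\{\psi^{-1}(F):F\in\mathcal{B},\ F\subseteq\psi(\mathbb{N})\}$, a hereditary compact family on $\mathbb{N}$ with $\mathcal{C}\upharpoonright M\cong\mathcal{B}\upharpoonright\psi(M)$, so that (ii) yields $i(\mathcal{C}\upharpoonright M)\geq\omega^\al+1$ for every infinite $M$. It then suffices to produce an infinite $L$ with $\mathcal{S}_\al[L]\subseteq\mathcal{C}$, since $\phi=\psi\circ\theta_L$, with $\theta_L:\mathbb{N}\to L$ the increasing bijection, witnesses $\al$-largeness for $B$. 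The naive attempt of applying Lemma \ref{lemgas} to $\mathcal{S}_\al$ and $\mathcal{C}$ fails, because $i(\mathcal{S}_\al\upharpoonright L)=\omega^\al+1$ need not be strictly below $i(\mathcal{C}\upharpoonright L)$; this is the main obstacle. I would get around it by feeding Lemma \ref{lemgas} the building blocks of $\mathcal{S}_\al$ instead. Namely, for $\al\geq1$ the usual recursive definition supplies hereditary, compact, spreading families $\mathcal{F}_1\subseteq\mathcal{F}_2\subseteq\cdots$, each of index strictly below $\omega^\al+1$, such that every $F\in\mathcal{S}_\al$ lies in $\mathcal{F}_{\min F}$: for $\al=\gamma+1$ let $\mathcal{F}_n$ consist of unions of at most $n$ successive $\mathcal{S}_\gamma$-sets (index $\omega^\gamma\cdot n+1$), and for limit $\al$ with $\al_n\uparrow\al$ let $\mathcal{F}_n=\bigcup_{j\leq n}\mathcal{S}_{\al_j}$ (index $\omega^{\al_n}+1$). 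For each $n$ the strict inequality $i(\mathcal{F}_n\upharpoonright L)\leq i(\mathcal{F}_n)<\omega^\al+1\leq i(\mathcal{C}\upharpoonright L)$ now holds for every infinite $L$, so Lemma \ref{lemgas} applies; iterating it produces $\mathbb{N}\supseteq M_1\supseteq M_2\supseteq\cdots$ with $\mathcal{F}_n\upharpoonright M_n\subseteq\mathcal{C}$. Taking a diagonal $L=\{m_1<m_2<\cdots\}$ with $m_k\in M_k$, any $F\in\mathcal{S}_\al$ spreads to $\theta_L(F)\in\mathcal{F}_{\min F}$ whose elements all lie in $M_{\min F}$, whence $\theta_L(F)\in\mathcal{F}_{\min F}\upharpoonright M_{\min F}\subseteq\mathcal{C}$, i.e. $\mathcal{S}_\al[L]\subseteq\mathcal{C}$.

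I expect the delicate points to be, first, the index bookkeeping that makes the hypothesis of Lemma \ref{lemgas} strict, which is precisely why one descends to the $\mathcal{F}_n$ rather than using $\mathcal{S}_\al$ directly, and second, the matching of the threshold $n=\min F$ with the diagonal stage $M_n$, which hinges on the spreading property of the $\mathcal{F}_n$ and on $\min\theta_L(F)\geq\min F$. The degenerate case $\al=0$, having no smaller building blocks, should be handled separately: $i(\mathcal{C}\upharpoonright\mathbb{N})\geq2$ forces infinitely many singletons into $\mathcal{C}$, and the points they support furnish the required $L$.
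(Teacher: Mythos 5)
Your proposal is correct and takes essentially the same route as the paper: the forward direction via the index $\omega^\al+1$ of $\mathcal{S}_\al$, and the converse by reducing to $\mathbb{N}$ and feeding Lemma \ref{lemgas} auxiliary hereditary compact families of index strictly below $\omega^\al$ (your $\mathcal{F}_n$ are precisely the paper's $\mathcal{S}_\be\ast\mathcal{A}_n$ in the successor case and the families $\mathcal{S}_{\be_k}$ in the limit case), followed by the same nested-sequence diagonalization and the spreading property to land $\mathcal{S}_\al[L]$ inside $\mathcal{B}$. The only cosmetic difference is that you fold the paper's separate base case $\al=1$ (handled there via largeness and Ramsey) into the uniform successor scheme, and additionally treat $\al=0$.
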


\begin{proof}
Given that (i) holds, (ii) is an immediate consequence of the fact that the Candor-Bendixson index of $\mathcal{S}_\al$ is equal to $\omega^\al + 1$ for every countable ordinal number $\al$ (see Proposition 4.10 from \cite{AA}).

 For the converse, we may clearly assume that $\mathcal{B}$ is a hereditary and compact family of finite subsets of the natural numbers. For a given countable ordinal $\al$, if (ii) holds, we shall prove the following statement.

  For every  infinite subset of the natural numbers $M$, there exists $L$ an infinite subset of $M$, such that $\mathcal{S}_\al[L] \subset \mathcal{B}$.

 The desired result evidently follows from the above. To prove this statement, we distinguish three cases.

  \noindent {\em Case 1:} $\al = 1$. Assume that for every  infinite subset of the natural numbers $M$, the Cantor-Bendixson index of $\mathcal{B}\upharpoonright M$ is infinite. This means that every such $M$ contains as subsets elements of $\mathcal{B}$, of unbounded cardinality. Since $\mathcal{B}$ is hereditary, we conclude that it is large and therefore it also is 1-large.

 \noindent {\em Case 2:} $\al$ is a limit ordinal number. Then there is $\{\be_k\}_k$ a strictly increasing sequence of ordinal numbers with $\sup_k\be_k = \al$, such that $\mathcal{S}_\al = \cup_k\{F\in\mathcal{S}_{\be_k}: \min F\geqslant k\}$.

 Using Lemma \ref{lemgas}, choose $L_1\supset\cdots\supset L_k\supset\cdots$ infinite subsets of $M$, such that $\mathcal{S}_{\be_k}\upharpoonright L_k\subset \mathcal{B}$, for all $k$.

 Choose $L = \{\ell_1<\cdots<\ell_k<\cdots\}$ an infinite subsets of $M$, with $\ell_m\in L_k$, for every $m\geqslant k$. It is not hard to check that $\mathcal{S}_\al[L]\subset \mathcal{B}$.

 \noindent {\em Case 3:} $\al$ is a successor ordinal number. If $\al = \be + 1$, then the following holds.

 For every $M$ infinite subset of the naturals and $n\inn$, there exists $L$ a further infinite subset of $M$, such that $(\mathcal{S}_\be\ast\mathcal{A}_n)\upharpoonright L\subset\mathcal{B}$, where
 \begin{equation*}
 \mathcal{S}_\be\ast\mathcal{A}_n = \{\cup_{i=1}^nF_i:\; F_i\in\mathcal{S}_\be,\; i=1,\ldots,n\}
 \end{equation*}

  The above statement follows form Lemma \ref{lemgas} and the fact that the Cantor-Bendixson index of $\mathcal{S}_\be\ast\mathcal{A}_n$ is equal to $\omega^\be n+1 < \omega^\al$.

 Therefore, given $M$ an infinite subset of the natural numbers, we may choose $L_1\supset\cdots\supset L_n\supset\ldots$ infinite subsets of $M$ such that $(\mathcal{S}_\be\ast\mathcal{A}_n)\upharpoonright L_n\subset\mathcal{B}$.

 Choose $L = \{\ell_1<\cdots<\ell_n<\cdots\}$ an infinite subsets of $M$, with $\ell_m\in L_n$, for every $m\geqslant n$. Once more, it is not hard to check that $\mathcal{S}_\al[L]\subset \mathcal{B}$.
\end{proof}

\section{A transfinite sequence of compact and hereditary families of finite subsets of $\cantor$}

In this section we define a transfinite sequence $\mathcal{G}_\al$, $\al<\omega_1$ of compact and hereditary families of finite subsets of $\cantor$ with each $\mathcal{G}_\al$ being $\al$-large for $\al<\omega_1$.
We shall first recursively define an auxiliary transfinite sequence $\{\mathscr{G}_\al\}_{\al<\omega_1}$ of subsets of $[\cantor]^{<\omega}\times\cantor$, which will then be used to define the $\mathcal{G}_\al$ for $\al<\omega_1$. We then prove the main properties of these families and we conclude this section by showing the $\mathcal{G}_\al$ have some similar properties to the Schreier families $\mathcal{S}_\al$.

\begin{ntt}
For $\si = \{\si(i)\}_{i=1}^\infty$ and $\tau = \{\tau(i)\}_{i=1}^\infty$ in $\cantor$, we define $\si\wedge\tau$ and $|\si\wedge\tau|$ as follows.
\begin{itemize}

\item[(i)] $\si\wedge\tau = \si$ and $|\si\wedge\tau| = \infty$, if $\si = \tau$.

\item[(ii)] $\si\wedge\tau = \varnothing$ and $|\si\wedge\tau| = 0$, if $\si(1) \neq \tau(1)$.

\item[(iii)] $\si\wedge\tau = \{\si(i)\}_{i=1}^\ell$ and $|\si\wedge\tau| = \ell$, if $\si \neq \tau, \si(1) = \tau(1)$ and $\ell = \min\{i\inn: \si(i+1) \neq \tau(i+1)\}$

\end{itemize}

For $s = \{s(i)\}_{i=1}^k$ and $t = \{t(i)\}_{i=1}^\ell$ finite sequences of 0's and 1's, we say that $s$ is an initial segment of $t$ and write $s\ceq t$, if $k\leqslant \ell$ and $s(i) = t(i)$ for $i=1,\ldots,k$. We say that $s$ is a proper initial segment of $t$ and write $s\cneq t$, if $s\ceq t$ and $s\neq t$.
\end{ntt}

\begin{dfn}
We define $\mathscr{G}_1$ to be all pairs $(F,\si)$, where $F = \{\tau_i\}_{i=1}^d\in[\cantor]^{<\omega}$, $d\inn$ and $\si\in\cantor$, such that the following are satisfied.

\begin{itemize}

\item[(i)] $\si \neq \tau_i$ for $i=1,\ldots,d$

\item[(ii)] $\si\wedge\tau_1\neq\varnothing$ and if $d>1$, then $\si\wedge\tau_1 \cneq \si\wedge\tau_2 \cneq \cdots \cneq \si\wedge\tau_d$

\item[(iii)] $d \leqslant |\si\wedge\tau_1|$

\end{itemize}
Define $\varmin(F,\si) = |\si\wedge\tau_1|$ and $\varmax(F,\si) = |\si\wedge\tau_d|$.\label{defg1}
\end{dfn}

Assume that $\al$ is a countable ordinal number, $\mathscr{G}_{\be}$ have been defined for $\be<\al$ and that for $(F,\si)\in\mathscr{G}_\be$, $\varmin(F,\si)$ and $\varmax(F,\si)$ have also been defined.

\begin{dfn}
Let $\be<\al$, $(F_i,\si_i)_{i=1}^d$, $d\inn$ be a finite sequence of elements of $\mathscr{G}_{\be}$ and $\si\in\cantor$.

We say that $(F_i,\si_i)_{i=1}^d$ {\em is a skipped branching of $\si$ in $\mathscr{G}_{\be}$}, if the following are satisfied.

\begin{itemize}

\item[(i)] The $F_i, i=1,\ldots,d$ are pairwise disjoint

\item[(ii)] $\si\neq\si_i$ for $i=1,\ldots,d$

\item[(iii)] $\si\wedge\si_1\neq\varnothing$ and if $d>1$, then $\si\wedge\si_1 \cneq \si\wedge\si_2 \cneq \cdots \cneq \si\wedge\si_d$

\item[(iv)] $|\si\wedge\si_i| < \varmin(F_i,\si_i)$ for $i=1,\ldots,d$

\item[(v)] $d\leqslant |\si\wedge\si_1|$

\end{itemize}\label{defskipped}
\end{dfn}

\begin{dfn}
Let $\be<\al$, $\si\in\cantor$ and $(F_i,\si)_{i=1}^d$, $d\inn$ be a finite sequence of elements of $\mathscr{G}_{\be}$.

We say that $(F_i,\si)_{i=1}^d$ {\em is an attached branching of $\si$ in $\mathscr{G}_{\be}$} if the following are satisfied.

\begin{itemize}

\item[(i)] The $F_i, i=1,\ldots,d$ are pairwise disjoint

\item[(ii)] If $d>1$, then $\varmax(F_i,\si) < \varmin(F_{i+1},\si)$, for $i=1,\ldots,d-1$

\item[(iii)] $d\leqslant \varmin(F_1,\si)$

\end{itemize}\label{defattached}

\end{dfn}

We are now ready to define $\mathscr{G}_\al$, distinguishing two cases.

\begin{dfn}
If $\al$ is a successor ordinal number with $\al = \be + 1$, we define $\mathscr{G}_\al$ to be all pairs $(F,\si)$, where $F\in[\cantor]^{<\omega}$ and $\si\in\cantor$, such that one of the following is satisfied.

\begin{itemize}

\item[(i)] $(F,\si)\in\mathscr{G}_{\be}$.

\item[(ii)] There is $(F_i,\si_i)_{i=1}^d$ a skipped branching of $\si$ in $\mathscr{G}_{\be}$ such that $F = \cup_{i=1}^d F_i$.

In this case we say that $(F,\si)$ {\em is skipped}. Moreover set $\varmin(F,\si) = |\si\wedge\si_1|$ and $\varmax(F,\si) = |\si\wedge\si_d|$.

\item[(iii)] There is $(F_i,\si)_{i=1}^d$ an attached branching of $\si$ in $\mathscr{G}_{\be}$ such that $F = \cup_{i=1}^d F_i$.

In this case we say that $(F,\si)$ {\em is attached}. Moreover set $\varmin(F,\si) = \varmin(F_1,\si)$ and $\varmax(F,\si) = \varmax(F_d,\si)$.

\end{itemize}

If $\al$ is a limit ordinal number, fix $\{\be_n\}_n$ a strictly increasing sequence of ordinal numbers with $\sup_n\be_n = \al$. We define
\begin{equation*}
\mathscr{G}_\al = \bigcup_{n=1}^\infty\big\{(F,\si)\in\mathscr{G}_{\be_n}:\; \varmin(F,\si)\geqslant n\big\}
\end{equation*}

\label{defgn}
\end{dfn}

\begin{rmk}
If $\al$ is a limit ordinal number, the sequence $\{\be_n\}_n$ may chosen in such a manner that both
\begin{equation*}
\mathscr{G}_\al = \bigcup_{n=1}^\infty\big\{(F,\si)\in\mathscr{G}_{\be_n}:\; \varmin(F,\si)\geqslant n\big\}
\end{equation*}
and
\begin{equation*}
\mathcal{S}_\al = \bigcup_{n=1}^\infty\big\{F\in\mathcal{S}_{\be_n}:\; \min F\geqslant n\big\}
\end{equation*}
From now on, we shall assume that this is the case.\label{remgs}
\end{rmk}

\begin{rmk}
Translating Definitions \ref{defg1}, \ref{defskipped}, \ref{defattached} and \ref{defgn} one obtains the following.
\begin{itemize}

\item[(i)] If $(F,\si)\in\mathscr{G}_1$, then $\#F\leqslant\varmin(F,\si)$.

\item[(ii)] If $(F,\si)\in\mathscr{G}_{\be+1}$ and $(F_i,\si_i)_{i=1}^d$ is a skipped branching of $\si$ in $\mathscr{G}_{\be}$ such that $F = \cup_{i=1}^dF_i$, then we have that $d \leqslant \varmin(F,\si)$.

\item[(iii)] If $(F,\si)\in\mathscr{G}_{\be+1}$ and $(F_i,\si)_{i=1}^d$ is an attached branching of $\si$ in $\mathscr{G}_{\be}$ such that $F = \cup_{i=1}^dF_i$, then we have that $d \leqslant \varmin(F,\si)$.

\end{itemize}\label{rmkdmin}
\end{rmk}

We now proceed to prove some key properties of the families $\mathscr{G}_\al$.

\begin{lem}
Let $\si, \si^\prime, \tau \in \cantor$, not all equal. The following are equivalent.
\begin{itemize}

\item[(i)]$\si\wedge\tau \cneq \si\wedge\si^\prime$

\item[(ii)]$\si\wedge\tau = \si^\prime\wedge\tau$.

\end{itemize}\label{littlelemma}
\end{lem}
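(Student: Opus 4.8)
The plan is to read the operation $\si\wedge\tau$ as the longest common initial segment (the meet in the binary tree of finite $0$--$1$ sequences) of the branches $\si$ and $\tau$, and to exploit the fact that the alphabet has only two letters. Throughout I would write $p=|\si\wedge\tau|$ and $q=|\si\wedge\si^\prime|$. Since both $\si\wedge\tau$ and $\si\wedge\si^\prime$ are by definition initial segments of $\si$, they are nested, and condition (i) amounts exactly to the length comparison $p<q$ (this remains correct when $q=\infty$, i.e. when $\si=\si^\prime$); so I will freely translate (i) as $|\si\wedge\tau|<|\si\wedge\si^\prime|$. Before the main argument I would dispose of the degeneracy forced by the hypothesis that $\si,\si^\prime,\tau$ are not all equal: each of (i) and (ii) already forces $\si\neq\tau$, because if $\si=\tau$ then $\si\wedge\tau=\si$ has infinite length, which cannot be a proper initial segment of anything and which, in (ii), would force $\si^\prime=\tau=\si$. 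After this reduction $p$ is a finite number, possibly $0$, and I will keep the case $p=0$ in mind, where $\si\wedge\tau=\varnothing$ and $\varnothing\cneq s$ just means $s\neq\varnothing$.

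For (i)$\Rightarrow$(ii) I would argue directly from coordinates. From $p<q$ we get $q\geqslant p+1$, so $\si$ and $\si^\prime$ agree on the first $p+1$ coordinates. On the other hand $p=|\si\wedge\tau|$ means $\si(i)=\tau(i)$ for $i\leqslant p$ while $\si(p+1)\neq\tau(p+1)$. Transporting these relations through the agreement of $\si$ with $\si^\prime$ gives $\si^\prime(i)=\tau(i)$ for $i\leqslant p$ and $\si^\prime(p+1)=\si(p+1)\neq\tau(p+1)$, whence $\si^\prime\wedge\tau$ has length exactly $p$ and coincides coordinatewise with $\si\wedge\tau$. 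This is precisely (ii).

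The implication (ii)$\Rightarrow$(i) carries the only real content, and this is where the two-letter alphabet is essential. Writing $w=\si\wedge\tau=\si^\prime\wedge\tau$ and $p=|w|$, both $\si$ and $\si^\prime$ agree with $\tau$ through coordinate $p$ and differ from $\tau$ at coordinate $p+1$. Since coordinate $p+1$ takes values in $\{0,1\}$, both $\si(p+1)$ and $\si^\prime(p+1)$ must equal the single bit opposite to $\tau(p+1)$, and hence $\si(p+1)=\si^\prime(p+1)$. Therefore $\si$ and $\si^\prime$ agree on the first $p+1$ coordinates, so $|\si\wedge\si^\prime|\geqslant p+1>p$ while $w\ceq\si\wedge\si^\prime$ (as $\si,\si^\prime$ agree through coordinate $p$); together these give $w\cneq\si\wedge\si^\prime$, which is (i).

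I do not expect any of these short computations to be the difficulty; the main obstacle is the bookkeeping of boundary cases, namely verifying that the reduction to $\si\neq\tau$ is valid under the weak hypothesis ``not all equal'' and checking that both implications survive at $p=0$, where $\si\wedge\tau=\varnothing$ and the relation $\cneq$ must be read against the empty sequence. Once these are handled, the essential point is the elementary observation used in (ii)$\Rightarrow$(i): two branches that both leave $\tau$ at the same level of a \emph{binary} tree are forced to leave it in the same direction, and so must stay together at least one coordinate longer.
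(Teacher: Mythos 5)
Your proof is correct and follows essentially the same coordinatewise argument as the paper: the (i)$\Rightarrow$(ii) direction is identical, and your (ii)$\Rightarrow$(i) direction is what the paper compresses into ``the inverse is proved similarly.'' Your explicit treatment of the degenerate cases ($\si=\tau$, $\si=\si^\prime$, $p=0$) and of the role of the two-letter alphabet --- two branches leaving $\tau$ at the same level of a binary tree must leave in the same direction --- makes visible exactly the content the paper's one-line remark relies on, but it is the same proof.
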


\begin{proof}
Assume that (i) holds. We have that $\tau(j) = \si(j) = \si^\prime(j)$, for $j=1,\ldots,|\si\wedge\tau|$. Whereas, for $j = |\si\wedge\tau| + 1$, we have that $\tau(j) \neq \si(j) = \si^\prime(j)$. Therefore, $|\si^\prime\wedge\tau| = |\si\wedge\tau|$, which means that $\si\wedge\tau = \si^\prime\wedge\tau$.

The inverse is proved similarly.
\end{proof}

\begin{lem}
Let $\al$ be a countable ordinal number and $(F,\si)\in\mathscr{G}_\al$. Then there exist $\tau_m, \tau_M$ in $F$ such that the following are satisfied.

\begin{itemize}

\item[(i)] $\varmin(F,\si) = |\si\wedge\tau_m|$ and $\varmax(F,\si) = |\si\wedge\tau_M|$

\item[(ii)] For $\tau\in F$ we have that $\si\wedge\tau_m \ceq \si\wedge\tau \ceq \si\wedge\tau_M$

\end{itemize}
Moreover, if $\al$ is a successor ordinal number with $\al = \be+1$ the following hold.
\begin{itemize}

\item[(iii)] If $(F,\si)$ is skipped and $(F_i,\si_i)_{i=1}^d$ is a skipped branching of $\si$ in $\mathscr{G}_\be$ such that $F = \cup_{i=1}^dF_i$, then for $i=1,\ldots,d$ and $\tau\in F_i$, we have that $\si\wedge\si_i = \si\wedge\tau$.

\item[(iv)] If $(F,\si)$ is attached and $(F_i,\si)_{i=1}^d$ is an attached branching of $\si$ in $\mathscr{G}_\be$ such that $F = \cup_{i=1}^dF_i$, then for $1\leqslant i < j \leqslant d$ and $\tau_1\in F_i, \tau_2\in F_j$, we have that $\si\wedge\tau_1 \cneq \si\wedge\tau_2$.

\end{itemize}\label{lemminmaxbetween}
\end{lem}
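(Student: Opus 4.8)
The plan is to prove all four assertions by transfinite induction on $\al$, proceeding in the natural order dictated by the recursive definition of $\mathscr{G}_\al$. The base case $\al=1$ is essentially a direct reading of Definition \ref{defg1}: conditions (ii) and (iii) there force $\si\wedge\tau_1 \ceq \si\wedge\tau_i \ceq \si\wedge\tau_d$ for every $\tau_i\in F$, so taking $\tau_m=\tau_1$ and $\tau_M=\tau_d$ immediately gives (i) and (ii), and assertions (iii), (iv) are vacuous since $\al=1$ is not a successor of the form $\be+1$ with $\be$ in the hierarchy.

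For the inductive step I would treat the limit and successor cases separately. The limit case is the easy one: if $\al=\sup_n\be_n$ and $(F,\si)\in\mathscr{G}_\al$, then by Definition \ref{defgn} we have $(F,\si)\in\mathscr{G}_{\be_n}$ for some $n$ with $\varmin(F,\si)\geqslant n$, and since $\varmin$ and $\varmax$ are inherited unchanged from $\mathscr{G}_{\be_n}$, assertions (i) and (ii) transfer verbatim from the induction hypothesis applied at $\be_n$; assertions (iii), (iv) are again vacuous at a limit ordinal. The heart of the argument is the successor case $\al=\be+1$, which subdivides according to the three clauses of Definition \ref{defgn}. If $(F,\si)\in\mathscr{G}_\be$ the statement is just the induction hypothesis at $\be$.

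The two genuinely new cases are the skipped and attached branchings, and here the key technical tool is Lemma \ref{littlelemma}. Suppose $(F,\si)$ is skipped via $(F_i,\si_i)_{i=1}^d$. I first establish (iii): fix $i$ and $\tau\in F_i$. Applying the induction hypothesis (ii) at $\be$ to $(F_i,\si_i)$ gives $\si_i\wedge\tau_m^{(i)} \ceq \si_i\wedge\tau \ceq \si_i\wedge\tau_M^{(i)}$, whence $|\si_i\wedge\tau|\geqslant\varmin(F_i,\si_i)$. Combined with clause (iv) of Definition \ref{defskipped}, namely $|\si\wedge\si_i| < \varmin(F_i,\si_i)\leqslant|\si_i\wedge\tau|$, the comparison $\si\wedge\si_i \cneq \si_i\wedge\tau$ holds, and Lemma \ref{littlelemma} (reading $\si',\tau$ as $\tau,\si_i$) converts this into the desired equality $\si\wedge\si_i=\si\wedge\tau$. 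Once (iii) is in hand, (i) and (ii) for skipped $(F,\si)$ follow by combining it with clause (iii) of Definition \ref{defskipped}, which nests the $\si\wedge\si_i$: one takes $\tau_m$ in $F_1$ and $\tau_M$ in $F_d$.

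For the attached case, suppose $(F,\si)$ is attached via $(F_i,\si)_{i=1}^d$; note all branches share the same $\si$. I would first prove (iv): for $i<j$, $\tau_1\in F_i$, $\tau_2\in F_j$, the induction hypothesis (ii) at $\be$ gives $|\si\wedge\tau_1|\leqslant\varmax(F_i,\si)$ and $|\si\wedge\tau_2|\geqslant\varmin(F_j,\si)$, and clause (ii) of Definition \ref{defattached} supplies $\varmax(F_i,\si)<\varmin(F_j,\si)$; since all these initial segments lie along the single branch $\si$ they are linearly ordered by $\ceq$, so the strict length inequality upgrades to $\si\wedge\tau_1 \cneq \si\wedge\tau_2$. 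Then (i) and (ii) for the attached case drop out by applying the induction hypothesis within $F_1$ and $F_d$ to locate $\tau_m,\tau_M$ and using (iv) to confirm these are the global $\ceq$-minimum and $\ceq$-maximum over all of $F$. The main obstacle I anticipate is bookkeeping the interaction between the two distinct $\wedge$-operations $\si\wedge(\cdot)$ and $\si_i\wedge(\cdot)$ in the skipped case; Lemma \ref{littlelemma} is precisely designed to bridge them, so the real work is verifying its hypotheses (the strict nesting and non-equality) cleanly at each step rather than any deep new idea.
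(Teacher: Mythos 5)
Your proposal is correct and follows essentially the same route as the paper's proof: transfinite induction with a trivial limit case, proving (iii) first in the skipped case by combining the inductive bound $|\si\wedge\si_i|<\varmin(F_i,\si_i)\leqslant|\si_i\wedge\tau|$ with Lemma \ref{littlelemma} to get $\si\wedge\si_i=\si\wedge\tau$, and proving (iv) first in the attached case from the gap condition $\varmax(F_i,\si)<\varmin(F_{j},\si)$ together with the comparability of initial segments of $\si$, then reading off (i) and (ii) with $\tau_m\in F_1$, $\tau_M\in F_d$. The only blemish is your parenthetical substitution into Lemma \ref{littlelemma} (the correct reading takes the lemma's $\si,\tau,\si'$ to be your $\si_i,\si,\tau$), but the step you actually perform is exactly the paper's.
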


\begin{proof}
We prove this lemma by transfinite induction. For $\al=1$ the desired result follows immediately from the definition of $\mathscr{G}_1$. Assume now that $\al$ is a countable ordinal number and that the statement holds for every $(F,\si)\in\mathscr{G}_\be$, for every $\be<\al$. If $\al$ is a limit ordinal number, then the result follows trivially from the inductive assumption and the definition of $\mathscr{G}_\al$. Assume therefore that $\al = \be + 1$ and let $(F,\si)\in\mathscr{G}_{\al}$.

We treat first the case when $(F,\si)$ is skipped. Let $(F_i,\si_i)_{i=1}^d$ be a skipped branching of $\si$ in $\mathscr{G}_\be$, such that $F = \cup_{i=1}^dF_i$.

We first prove part (iii), i.e. for $\tau\in F_i$, we have that $\si\wedge\si_i = \si\wedge\tau$, $i=1,\ldots,d$.

By the inductive assumption, there exist $\tau_m^i\in F_i$ such that $\varmin(F_i,\si_i) = |\si_1\wedge\tau_m^i|$ and for every $\tau\in F_i$ we have that $\si_i\wedge\tau_m^i\ceq\si_i\wedge\tau$.

Since, by definition, $|\si\wedge\si_i| < \varmin(F_i,\si_i) = |\si_i\wedge\tau_m^i|\leqslant |\si_i\wedge\tau|$, it follows that $\si\wedge\si_i \cneq \si_i\wedge\tau$ and by Lemma \ref{littlelemma} $\si\wedge\si_i = \si\wedge\tau$.

Choosing any $\tau_m\in F_1$ and $\tau_M\in F_d$, it is easy to see that (i) and (ii) are satisfied.

Assume now that $(F,\si)$ is attached. Let $(F_i,\si)_{i=1}^d$ be an attached branching of $\si$ in $\mathscr{G}_\be$, such that $F = \cup_{i=1}^dF_i$.

By the inductive assumption, there exist $\tau_m^i,\tau_M^i\in F_i$ such that $\varmin(F_i,\si) = |\si\wedge\tau_m^i|, \varmax(F_i,\si) = |\si\wedge\tau_M^i|$ and for every $\tau\in F_i$ we have that $\si\wedge\tau_m^i \ceq \si\wedge\tau \ceq \si\wedge\tau_M^i$.

We will show that for $1\leqslant i < j\leqslant d$, we have that $\si\wedge\tau_M^i \cneq \si\wedge\tau_m^j$. This proves both (iv) and that $\tau_m = \tau_m^1, \tau_M = \tau_M^d$ have the desired properties.

However, this follows immediately from the fact that $|\si\wedge\tau_M^i| = \varmax(F_i,\si) < \varmin(F_j,\si) = |\si\wedge\tau_m^j|$.

\end{proof}

The following result is an immediate consequence of Lemma \ref{lemminmaxbetween}.

\begin{cor}
Let $\al$ be a countable ordinal number and $(F,\si)\in\mathscr{G}_\al$. Then the following hold.
\begin{itemize}

\item[(i)] $\varmin(F,\si) = \min\{|\si\wedge\tau|: \tau\in F\}$

\item[(ii)] $\varmax(F,\si) = \max\{|\si\wedge\tau|: \tau\in F\}$

\end{itemize}\label{corminmax}
\end{cor}

\begin{cor}
Let $\al$ be a countable ordinal number and $(F,\si)\in\mathscr{G}_\al$, such that $\#F\geqslant 2$. Then
\begin{equation*}
\varmin(F,\si) \leqslant \min\{|\tau_1\wedge\tau_2|: \tau_1,\tau_2\in F\; \text{with}\; \tau_1\neq\tau_2\}
\end{equation*}
\label{corminbound}
\end{cor}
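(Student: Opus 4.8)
The plan is to reduce everything to the structural information already supplied by Lemma \ref{lemminmaxbetween}, which isolates a single element of $F$ realizing $\varmin(F,\si)$ and simultaneously controls the meets $\si\wedge\tau$ for all $\tau\in F$. Once that lemma is in hand no transfinite induction is needed, so the whole argument is short.

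First I would invoke Lemma \ref{lemminmaxbetween} to obtain $\tau_m\in F$ with $\varmin(F,\si) = |\si\wedge\tau_m|$ and, crucially, $\si\wedge\tau_m \ceq \si\wedge\tau$ for every $\tau\in F$. Since $\si\wedge\tau$ is by definition an initial segment of $\tau$, transitivity of $\ceq$ then yields $\si\wedge\tau_m \ceq \tau$ for every $\tau\in F$; in other words, the finite sequence $\si\wedge\tau_m$ is a common initial segment of all the branches listed in $F$.

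Next I would fix distinct $\tau_1,\tau_2\in F$, which exist precisely because $\#F\geqslant 2$. By the previous step $\si\wedge\tau_m \ceq \tau_1$ and $\si\wedge\tau_m \ceq \tau_2$, so $\si\wedge\tau_m$ is a common initial segment of both $\tau_1$ and $\tau_2$. Because $\tau_1\wedge\tau_2$ is the longest common initial segment of the two, every common initial segment is itself an initial segment of it, and hence $\si\wedge\tau_m \ceq \tau_1\wedge\tau_2$. Comparing lengths gives $\varmin(F,\si) = |\si\wedge\tau_m| \leqslant |\tau_1\wedge\tau_2|$, and taking the minimum over all distinct pairs $\tau_1,\tau_2\in F$ yields the stated inequality.

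I do not expect a genuine obstacle here, as the argument amounts to bookkeeping with the meet operation. The only point that warrants care is the elementary fact that a common initial segment of two sequences is an initial segment of their meet; this is immediate from the definition of $\wedge$, and if one wishes to be fully explicit it can be extracted from Lemma \ref{littlelemma}.
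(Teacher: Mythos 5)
Your proof is correct and takes essentially the same route as the paper: both invoke Lemma \ref{lemminmaxbetween} to produce $\tau_m\in F$ with $\varmin(F,\si)=|\si\wedge\tau_m|$ and $\si\wedge\tau_m\ceq\si\wedge\tau$ for all $\tau\in F$, then deduce $\si\wedge\tau_m\ceq\tau_1\wedge\tau_2$ and compare lengths. The only difference is cosmetic: you make explicit the elementary step that $\si\wedge\tau\ceq\tau$, so that $\si\wedge\tau_m$ is a common initial segment of $\tau_1$ and $\tau_2$, whereas the paper leaves this implicit.
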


\begin{proof}
Let $\tau_1\neq\tau_2$ be in $F$. By Lemma \ref{lemminmaxbetween}, there exists $\tau_m\in F$, such that $\varmin(F,\si) = |\si\wedge\tau_m|$ and $\si\wedge\tau_m \ceq \si\wedge\tau_1$ as well as $\si\wedge\tau_m \ceq \si\wedge\tau_2$. It follows that $\si\wedge\tau_m \ceq \tau_1\wedge\tau_2$. We conclude that $\varmin(F,\si) \leqslant |\tau_1\wedge\tau_2|$.
\end{proof}

\begin{lem}

Let $\al$ be a countable ordinal number and $(F,\si)\in\mathscr{G}_\al$, such that $\#F\geqslant 2$. Then there exists $\si^\prime\in\cantor$, such that $(F,\si^\prime)\in\mathscr{G}_\al$ and
\begin{equation*}
\varmin(F,\si^\prime) = \min\{|\tau_1\wedge\tau_2|: \tau_1,\tau_2\in F\; \text{with}\; \tau_1\neq\tau_2\}
\end{equation*}\label{lemmineq}

\end{lem}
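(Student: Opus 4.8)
The plan is to argue by transfinite induction on $\al$, using the intrinsic description from Corollary \ref{corminmax} that $\varmin(F,\si')=\min\{|\si'\wedge\tau|:\tau\in F\}$ depends only on the pair. Thus the target $m:=\min\{|\tau_1\wedge\tau_2|:\tau_1\neq\tau_2\in F\}$ depends only on $F$, and (by Corollary \ref{corminbound}) always satisfies $m\geqslant\varmin(F,\si)$; the lemma asks us to realize equality by a suitable choice of witness inside $\mathscr{G}_\al$. The engine of every computation is Lemma \ref{littlelemma}: whenever $\tau,\tau'\in F$ satisfy $\si\wedge\tau\cneq\si\wedge\tau'$ one has $\tau\wedge\tau'=\si\wedge\tau$, so nested meets with the witness translate directly into meets between pairs of branches. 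The recurring phenomenon I will exploit is that a branching of length at least $2$ forces the overall minimal meet down to the branching depth $\varmin(F,\si)$, so that $\si$ itself is already optimal, whereas a branching of length $1$ is the only situation demanding a genuinely new witness, which I recover from the inductive hypothesis on a lower family.

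For the base case $\al=1$, write $F=\{\tau_i\}_{i=1}^d$ in the order furnished by Definition \ref{defg1}(ii), so that $\si\wedge\tau_1\cneq\cdots\cneq\si\wedge\tau_d$. Lemma \ref{littlelemma} gives $|\tau_i\wedge\tau_j|=|\si\wedge\tau_i|$ for $i<j$, whence $m=|\si\wedge\tau_1|=\varmin(F,\si)$ and $\si'=\si$ works. In the limit case, if $(F,\si)\in\mathscr{G}_{\be_n}$ with $\varmin(F,\si)\geqslant n$, the inductive hypothesis yields $\si'$ with $(F,\si')\in\mathscr{G}_{\be_n}$ and $\varmin(F,\si')=m\geqslant\varmin(F,\si)\geqslant n$; hence $(F,\si')$ lies in the defining union for $\mathscr{G}_\al$.

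The successor step $\al=\be+1$ splits according to which clause of Definition \ref{defgn} witnesses $(F,\si)\in\mathscr{G}_\al$. If $(F,\si)\in\mathscr{G}_\be$, the inductive hypothesis gives $\si'$ with $(F,\si')\in\mathscr{G}_\be\subset\mathscr{G}_\al$ and $\varmin(F,\si')=m$; the attached case with branching length $d=1$ reduces to this. If $(F,\si)$ is skipped or attached with an underlying branching $(F_i,\cdot)_{i=1}^d$ of length $d\geqslant 2$, I claim $m=\varmin(F,\si)$, so $\si'=\si$ suffices. Indeed, in the skipped case Lemma \ref{lemminmaxbetween}(iii) gives $\si\wedge\tau=\si\wedge\si_i$ for $\tau\in F_i$, so consecutive cross-piece pairs have meet length $|\si\wedge\si_i|$, the least being $|\si\wedge\si_1|=\varmin(F,\si)$; in the attached case Lemma \ref{lemminmaxbetween}(iv) gives $\si\wedge\tau\cneq\si\wedge\tau'$ for $\tau\in F_1,\tau'\in F_2$, so by Lemma \ref{littlelemma} these meets have lengths $|\si\wedge\tau|$ with least value $\varmin(F_1,\si)=\varmin(F,\si)$. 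In both cases the within-piece pairs are strictly deeper by Corollary \ref{corminbound} (together with condition (iv) of Definition \ref{defskipped}, resp. the nesting $\varmax(F_i,\si)<\varmin(F_{i+1},\si)$), so the minimum is attained across pieces. The only remaining case is a skipped branching of length $d=1$: here $F=F_1$ with $(F_1,\si_1)\in\mathscr{G}_\be$ and $\#F_1\geqslant 2$, and applying the inductive hypothesis to $(F_1,\si_1)$ produces $\si'$ with $(F_1,\si')\in\mathscr{G}_\be\subset\mathscr{G}_\al$ and $\varmin(F,\si')=\varmin(F_1,\si')=m$.

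I expect the main obstacle to be precisely the length-one skipped branching, the single configuration in which $\varmin(F,\si)=|\si\wedge\si_1|$ is strictly below $m$ and $\si$ must actually be replaced; all the structural difficulty there is offloaded onto the inductive hypothesis for $\mathscr{G}_\be$. The secondary delicate point is the bookkeeping in the $d\geqslant 2$ cases: one must be sure the minimal meet is realized by an across-piece pair rather than hiding deeper inside a single piece, and this is exactly what Lemma \ref{lemminmaxbetween} (parts (iii) and (iv)) together with Corollary \ref{corminbound} deliver.
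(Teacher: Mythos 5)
Your proposal is correct and follows essentially the same route as the paper: transfinite induction in which the base case and the length-$d\geqslant 2$ skipped/attached cases use Lemma \ref{littlelemma} together with Lemma \ref{lemminmaxbetween}(iii)/(iv) and Corollary \ref{corminbound} to show $\si$ itself is already optimal, the limit case combines Corollary \ref{corminbound} with the inductive hypothesis to stay in the defining union, and the length-one skipped branching is reduced to the inductive hypothesis applied to $(F_1,\si_1)\in\mathscr{G}_\be$. If anything, your explicit four-way case split at the successor step is slightly cleaner than the paper's write-up (which conflates the skipped and attached sub-cases in its wording), and your only inaccuracy --- calling within-$F_1$ pairs \emph{strictly} deeper in the attached case, where equality with $\varmin(F_1,\si)$ can occur --- is immaterial since the argument only needs the cross-piece pair attaining $\varmin(F,\si)$ together with the lower bound from Corollary \ref{corminbound}.
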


\begin{proof}
We prove this lemma by transfinite induction on $\al$. Assume that $\al = 1$, $(F,\si)\in\mathscr{G}_1$, such that $\#F\geqslant 2$ and $F = \{\tau_i\}_{i=1}^d, d\geqslant 2$ such that the assumptions of Definition \ref{defg1} are satisfied. Then $\si\wedge\tau_1\cneq\si\wedge\tau_2$ and by Lemma \ref{littlelemma} we have that $\si\wedge\tau_1 = \tau_1\wedge\tau_2$. We conclude that $\varmin(F,\si) = |\si\wedge\tau_1| = |\tau_1\wedge\tau_2|$. Corollary \ref{corminbound} yields that $\varmin(F,\si) = \min\{|\tau_1\wedge\tau_2|: \tau_1,\tau_2\in F\; \text{with}\; \tau_1\neq\tau_2\}$ and hence, the desired $\si^\prime$ is $\si$ itself.

Assume now that $\al$ is a countable ordinal number and that the conclusion holds for every $\be<\al$.

If $\al$ is a limit ordinal number, choose $\{\be_n\}_n$ a strictly increasing sequence of ordinal numbers with $\sup_n\be_n = \al$, such that the assumptions of Definition \ref{defgn} are satisfied. Let $(F,\si)\in\mathscr{G}_\al$ with $\#F\geqslant 2$. Then there is $n\inn$ such that $(F,\si)\in\mathscr{G}_{\be_n}$ and $\varmin(F,\si)\geqslant n$. Corollary \ref{corminbound} yields the following.
\begin{equation}
\min\{|\tau_1\wedge\tau_2|: \tau_1,\tau_2\in F\; \text{with}\; \tau_1\neq\tau_2\} \geqslant n \label{lemmineqeq1}
\end{equation}
By the inductive assumption, there exists $\si^\prime\in\cantor$ with $(F,\si^\prime)\in\mathscr{G}_{\be_n}$ and $\varmin(F,\si^\prime) = \min\{|\tau_1\wedge\tau_2|: \tau_1,\tau_2\in F\; \text{with}\; \tau_1\neq\tau_2\}$. By \eqref{lemmineqeq1} we have that $\varmin(F,\si^\prime)\in\mathscr{G}_\al$.

Assume now that $\al$ is a successor ordinal number with $\al = \be+1$ and let $(F,\si)\in\mathscr{G}_\al$ with $\#F\geqslant 2$. If $(F,\si)\in\mathscr{G}_\be$, then the inductive assumption yields the desired result. If this is not the case, then $(F,\si)$ is either skipped, or attached. If it is attached, then there is $(F_i,\si_i)_{i=1}^d$ an attached branching of $\si$, such that $F = \cup_{i=1}^dF_i$. If $d=1$, then $(F,\si_1)\in\mathscr{G}_\be$ and by the inductive assumption we are done. Otherwise, choose $\tau_1\in F_1, \tau_2\in F_2$. Lemma \ref{lemminmaxbetween} (iii) yields that $\si\wedge\tau_1 = \si\wedge\si_1 \cneq \si\wedge\si_2 = \si\wedge\tau_2$ and by Lemma \ref{littlelemma} we have that $\si\wedge\tau_1 = \tau_1\wedge\tau_2$. We conclude that $\varmin(F,\si) = |\si\wedge\si_1| = |\si\wedge\tau_1| = |\tau_1\wedge\tau_2|$ and therefore, applying Corollary \ref{corminbound} we have that $\si$ is the desired $\si^\prime$.

If on the other hand $(F,\si)$ is attached, using similar reasoning, Lemma \ref{lemminmaxbetween} (iv) and Corollary \ref{corminmax}, we conclude the desired result.

\end{proof}

\begin{cor}
Let $\{(F_k,\si_k)\}_k$ be a sequence in $\bigcup_{\be<\omega_1}\mathscr{G}_\be$ such that $\lim_k\varmin(F_k,\si_k) = \infty$. Then, if $F$ is an accumulation point of $\{F_k\}_k$, we have that $\#F \leqslant 1$.\label{corcomal}
\end{cor}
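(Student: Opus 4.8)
My plan is to argue by contradiction, the whole weight of the argument resting on Corollary \ref{corminbound}, which says that any two distinct branches of a set $F$ with $(F,\si)\in\mathscr{G}_\al$ must agree on at least the first $\varmin(F,\si)$ coordinates. I will view each finite subset of $\cantor$ through its characteristic function, so that $\bigcup_{\be<\omega_1}\mathscr{G}_\be$ sits inside the compact product space $\{0,1\}^{\cantor}$ and a limit point $F$ of $\{F_k\}_k$ is a cluster point of the sequence there. The single topological fact I intend to use is that for a fixed $\rho\in\cantor$ the coordinate set $\{G:\rho\in G\}$ is clopen; consequently every point of a cluster point $F$ already belongs to $F_k$ for infinitely many $k$.

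Granting this, I would suppose for contradiction that $\#F\geqslant 2$ and fix distinct $\rho,\rho^\prime\in F$, noting that $\ell:=|\rho\wedge\rho^\prime|<\infty$ since $\rho\neq\rho^\prime$. Applying the clopen neighbourhood $U=\{G:\rho\in G\text{ and }\rho^\prime\in G\}$ of $F$, I obtain infinitely many $k$ with $\{\rho,\rho^\prime\}\subset F_k$, and for each such $k$ the set $F_k$ has at least two elements, so Corollary \ref{corminbound} yields
\begin{equation*}
\varmin(F_k,\si_k)\leqslant\min\{|\tau_1\wedge\tau_2|:\tau_1,\tau_2\in F_k,\ \tau_1\neq\tau_2\}\leqslant|\rho\wedge\rho^\prime|=\ell.
\end{equation*}
Since this bounds $\varmin(F_k,\si_k)$ by the fixed number $\ell$ for infinitely many $k$, it contradicts $\lim_k\varmin(F_k,\si_k)=\infty$, forcing $\#F\leqslant 1$.

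The only point that needs care, and the place where I would be most cautious, is the precise meaning of ``limit point'' together with the accompanying claim that each $\rho\in F$ already lies in infinitely many $F_k$; once the topology on the finite subsets is fixed in this way, the corollary follows in a single line. Should one instead prefer the Hausdorff metric attached to $d(\si,\tau)=2^{-|\si\wedge\tau|}$ on $\cantor$, the same scheme still works after one extra approximation step: pass to a subsequence with $F_k\to F$, choose $\tau_k,\tau_k^\prime\in F_k$ with $|\tau_k\wedge\rho|,|\tau_k^\prime\wedge\rho^\prime|\geqslant\ell+1$ for large $k$, observe that then $\tau_k\neq\tau_k^\prime$ and $|\tau_k\wedge\tau_k^\prime|=\ell$, and invoke Corollary \ref{corminbound} exactly as above. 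I do not anticipate any further difficulty, the substantive content having already been secured in Corollary \ref{corminbound}.
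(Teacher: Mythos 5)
Your proof is correct and is essentially the paper's own argument: the paper likewise assumes $\tau_1\neq\tau_2$ in $F$, extracts an infinite set $L$ of indices with $\tau_1,\tau_2\in F_k$ for all $k\in L$ (exactly your clopen-neighbourhood step in $\{0,1\}^{\cantor}$), and applies Corollary \ref{corminbound} to get $\varmin(F_k,\si_k)\leqslant|\tau_1\wedge\tau_2|<\infty$, contradicting $\lim_k\varmin(F_k,\si_k)=\infty$. Your extra care in fixing the topology, and the Hausdorff-metric variant, are sound but add nothing beyond the paper's one-line version.
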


\begin{proof}

Let $F$ be an accumulation point of $\{F_k\}_k$, and assume that there are $\tau_1\neq\tau_2$ in $F$. Then there exists $L$ an infinite subset of the natural numbers, such that $\tau_1,\tau_2\in F_k$, for every $k\in L$. Corollary \ref{corminbound} yields that $|\tau_1\wedge\tau_2| \geqslant \varmin(F_k,\si_k)$, for all $k\in L$. We conclude that $|\tau_1\wedge\tau_2| = \infty$, i.e. $\tau_1 = \tau_2$, a contradiction.

\end{proof}

The following two lemmas will both be useful in the sequel.

\begin{lem}
Let $\al$ be a countable ordinal number and $(F,\si)\in\mathscr{G}_\al$. Let also $\si^\prime\in\cantor$, such that $\si^\prime\wedge\tau = \si\wedge\tau$ for all $\tau\in F$. Then the following hold.
\begin{itemize}

\item[(i)] $(F,\si^\prime)\in\mathscr{G}_\al$

\item[(ii)] $\varmin(F,\si^\prime) = \varmin(F,\si)$ and $\varmax(F,\si^\prime) = \varmax(F,\si)$

\end{itemize}\label{lemsiprime}
\end{lem}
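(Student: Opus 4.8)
The plan is to prove both parts simultaneously by transfinite induction on $\al$, since for each pair the values $\varmin$ and $\varmax$ are defined alongside its membership in $\mathscr{G}_\al$. The guiding principle is that every clause entering the recursive definition of $\mathscr{G}_\al$ refers to $\si$ only through quantities of the form $\si\wedge\tau$ with $\tau$ ranging in the relevant set $F$ (or through auxiliary data that the induction will transfer), so that replacing $\si$ by a $\si^\prime$ having the same meets with every member of $F$ should leave all the defining conditions, and hence $\varmin$ and $\varmax$, intact. For the base case $\al=1$ I would read off Definition \ref{defg1} directly: writing $F=\{\tau_i\}_{i=1}^d$, the hypothesis $\si^\prime\wedge\tau_i=\si\wedge\tau_i$ makes each of conditions (i)--(iii) literally the same for $\si$ and $\si^\prime$ (in particular $\si^\prime\neq\tau_i$, as $\si^\prime\wedge\tau_i$ is finite), and $\varmin,\varmax$ are unchanged since they are $|\si\wedge\tau_1|$ and $|\si\wedge\tau_d|$. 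For $\al$ limit I would invoke Definition \ref{defgn}: choose $n$ with $(F,\si)\in\mathscr{G}_{\be_n}$ and $\varmin(F,\si)\geqslant n$, apply the inductive hypothesis to get $(F,\si^\prime)\in\mathscr{G}_{\be_n}$ with $\varmin(F,\si^\prime)=\varmin(F,\si)\geqslant n$, and conclude $(F,\si^\prime)\in\mathscr{G}_\al$. In the successor case $\al=\be+1$ the subcase $(F,\si)\in\mathscr{G}_\be$ is again immediate from the inductive hypothesis, so the content lies in the skipped and attached subcases.

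For the attached subcase, let $(F_i,\si)_{i=1}^d$ be an attached branching of $\si$ in $\mathscr{G}_\be$ with $F=\cup_{i=1}^dF_i$. Since each $F_i\subset F$, the hypothesis $\si^\prime\wedge\tau=\si\wedge\tau$ holds for every $\tau\in F_i$, so the inductive hypothesis applied to each $(F_i,\si)\in\mathscr{G}_\be$ yields $(F_i,\si^\prime)\in\mathscr{G}_\be$ with $\varmin(F_i,\si^\prime)=\varmin(F_i,\si)$ and $\varmax(F_i,\si^\prime)=\varmax(F_i,\si)$. I would then verify that $(F_i,\si^\prime)_{i=1}^d$ is an attached branching of $\si^\prime$: disjointness of the $F_i$ is unchanged, while conditions (ii) and (iii) of Definition \ref{defattached} survive verbatim because the only $\si$-dependent quantities occurring in them, namely $\varmin(F_i,\cdot)$ and $\varmax(F_i,\cdot)$, are preserved. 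This gives $(F,\si^\prime)\in\mathscr{G}_\al$ and, from Definition \ref{defgn}(iii), $\varmin(F,\si^\prime)=\varmin(F_1,\si^\prime)=\varmin(F,\si)$, and likewise for $\varmax$.

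The skipped subcase is the one genuine obstacle, because a skipped branching $(F_i,\si_i)_{i=1}^d$ of $\si$ involves auxiliary points $\si_i$ that need not lie in $F$, so the hypothesis says nothing about $\si^\prime\wedge\si_i$ a priori. My plan is to reuse the very same branching and to prove the key identity $\si^\prime\wedge\si_i=\si\wedge\si_i$ for each $i$. To obtain it, fix $\tau\in F_i$ and set $w_i=\si\wedge\si_i$; by Lemma \ref{lemminmaxbetween}(iii) we have $\si\wedge\tau=w_i$, and by hypothesis $\si^\prime\wedge\tau=\si\wedge\tau=w_i$. Comparing coordinates at position $|w_i|+1$ and using that the sequences take values in $\{0,1\}$, one sees that $\si$ and $\si^\prime$, both differing from $\tau$ at that coordinate, agree with each other there, whereas $\si_i$ differs from $\si$ there; since moreover $\si^\prime$ and $\si_i$ agree with $\tau$ on the first $|w_i|$ coordinates, they agree with each other up to $|w_i|$ and disagree at $|w_i|+1$, whence $\si^\prime\wedge\si_i=w_i$ (this step can be packaged through Lemma \ref{littlelemma} if one prefers). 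With this identity in hand, every clause of Definition \ref{defskipped} for $\si^\prime$ reduces to the corresponding clause already known for $\si$, since each such clause involves $\si$ only via $\si\wedge\si_i$ together with the quantities $\varmin(F_i,\si_i)$, which do not depend on $\si$. Thus $(F_i,\si_i)_{i=1}^d$ is a skipped branching of $\si^\prime$ and $(F,\si^\prime)\in\mathscr{G}_\al$; finally $\varmin(F,\si^\prime)=|\si^\prime\wedge\si_1|=|\si\wedge\si_1|=\varmin(F,\si)$ and similarly $\varmax(F,\si^\prime)=\varmax(F,\si)$, closing the induction.
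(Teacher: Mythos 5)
Your proposal is correct and follows essentially the same route as the paper: transfinite induction in which the attached subcase passes through the inductive hypothesis and the skipped subcase hinges on the identity $\si^\prime\wedge\si_i=\si\wedge\si_i$, which the paper also extracts from Lemma \ref{lemminmaxbetween}(iii) together with Lemma \ref{littlelemma} (your explicit binary-coordinate computation is just that lemma unpacked). The only cosmetic difference is that the paper obtains part (ii) at once from part (i) and Corollary \ref{corminmax} instead of carrying $\varmin$ and $\varmax$ through the induction as you do; both are fine.
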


\begin{proof}
We prove this lemma by transfinite induction. The case $\al = 1$ follows easily from the definition of $\mathscr{G}_1$. Assume now that the result holds for every $\be<\al$. The case where $\al$ is a limit ordinal number is trivial, assume therefore that $\al = \be + 1$ and let $(F,\si)\in\mathscr{G}_{\al}$, $\si^\prime\in\cantor$ such that the assumptions of the lemma are satisfied. Notice that it is enough to show that (i) is true, since part (ii) of the conclusion follows immediately from (i) and Corollary \ref{corminmax}.

We treat first the case when $(F,\si)$ is skipped, i.e. there exists $(F_i, \si_i)_{i=1}^d$ a skipped branching of $\si$ in $\mathscr{G}_\be$, with $F = \cup_{i=1}^dF_i$. To show that $(F,\si^\prime)\in\mathscr{G}_{\al}$, it suffices to show that $(F_i, \si_i)_{i=1}^d$ is a skipped branching of $\si^\prime$.

Notice that it is enough to show that $\si\wedge\si_i = \si^\prime\wedge\si_i$ for $i=1,\ldots,d$, which, by Lemma \ref{littlelemma}, is equivalent to $\si\wedge\si_i\cneq \si\wedge\si^\prime$ for $i=1,\ldots,d$.

Fix $1\leqslant i \leqslant d$ and chose $\tau\in F_i$. Lemma \ref{lemminmaxbetween} (iii) yields that $\si\wedge\si_i = \si\wedge\tau = \si^\prime\wedge\tau$. Once more, Lemma \ref{littlelemma} yields that $\si\wedge\si_i = \si\wedge\tau\cneq\si\wedge\si^\prime$.

Assume now that $(F,\si)$ is attached, i.e. there exists $(F_i, \si)_{i=1}^d$ an attached branching of $\si$ in $\mathscr{G}_\be$, with $F = \cup_{i=1}^dF_i$. Since, by the inductive assumption, the conclusion holds for the $(F_i,\si), i=1,\ldots,d,\; \si^\prime$, it is straightforward to check that $(F_i, \si^\prime)_{i=1}^d$ an attached branching of $\si^\prime$ in $\mathscr{G}_\be$ and therefore $(F,\si^\prime)\in\mathscr{G}_{\al}$.
\end{proof}

\begin{lem}
Let $(F,\si)\in\bigcup_{\be<\omega_1}\mathscr{G}_\be$ and $\si^\prime\in\cantor$ such that $\si\wedge\tau \cneq \si^\prime\wedge\tau$ for all $\tau\in F$. Then, if $\al = \min\{\be:\;(F,\si)\in\mathscr{G}_\be\}$, $\al$ is not a limit ordinal number and the following hold.
\begin{itemize}

\item[(i)] If $\al=1$, then $\#F = 1$.

\item[(ii)] If $\al = \be + 1$, then there exists $\si^{\prime\prime}\in\cantor$ with $(F,\si^{\prime\prime})\in\mathscr{G}_{\be}$.

\end{itemize}\label{lemnminusone}
\end{lem}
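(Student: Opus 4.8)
The plan is to reduce all three assertions to a single structural fact: the hypothesis forces $\varmin(F,\si)=\varmax(F,\si)$, after which everything follows by reading off the relevant definitions. First I would dispose of the claim that $\al$ is not a limit ordinal, since this needs neither the hypothesis nor the structural fact: if $\al$ were a limit ordinal, then by Definition \ref{defgn} every element of $\mathscr{G}_\al$ already lies in some $\mathscr{G}_{\be_n}$ with $\be_n<\al$, so $(F,\si)\in\mathscr{G}_{\be_n}$ would contradict the minimality of $\al$. Hence $\al$ is either $1$ or a successor.

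The heart of the argument, and the step I expect to be the main obstacle, is to show that $|\si\wedge\tau|$ is the same for every $\tau\in F$. First note $\si\neq\si^\prime$, for otherwise $\si\wedge\tau=\si^\prime\wedge\tau$ for all $\tau$ in the nonempty set $F$, contradicting the strict relation $\si\wedge\tau\cneq\si^\prime\wedge\tau$; thus $q:=|\si\wedge\si^\prime|$ is finite. Now fix $\tau\in F$. Since $\si\wedge\tau$ and $\si\wedge\si^\prime$ are both initial segments of $\si$, they are comparable, and I would rule out both $|\si\wedge\tau|<q$ and $|\si\wedge\tau|>q$. If $\si\wedge\tau\cneq\si\wedge\si^\prime$, then Lemma \ref{littlelemma}, applied to the triple $\si,\si^\prime,\tau$, gives $\si\wedge\tau=\si^\prime\wedge\tau$, contradicting $\si\wedge\tau\cneq\si^\prime\wedge\tau$. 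If instead $\si\wedge\si^\prime\cneq\si\wedge\tau$, then Lemma \ref{littlelemma} applied to the triple $\si,\tau,\si^\prime$ gives $\si\wedge\si^\prime=\si^\prime\wedge\tau$, so $|\si^\prime\wedge\tau|=q<|\si\wedge\tau|$, again contradicting the hypothesis. Therefore $|\si\wedge\tau|=q$ for every $\tau\in F$, and Corollary \ref{corminmax} yields $\varmin(F,\si)=\varmax(F,\si)=q$.

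It then remains to combine this equality with the minimality of $\al$. If $\al=1$, write $F=\{\tau_i\}_{i=1}^d$ as in Definition \ref{defg1}; were $d>1$, condition (ii) there would give $\si\wedge\tau_1\cneq\cdots\cneq\si\wedge\tau_d$, whence $\varmin(F,\si)=|\si\wedge\tau_1|<|\si\wedge\tau_d|=\varmax(F,\si)$, a contradiction, so $\#F=1$. If $\al=\be+1$, then $(F,\si)\notin\mathscr{G}_\be$ by minimality, so by Definition \ref{defgn} $(F,\si)$ is skipped or attached. In the attached case, a branching $(F_i,\si)_{i=1}^d$ with $d\geqslant 2$ would force $\varmin(F,\si)=\varmin(F_1,\si)\leqslant\varmax(F_1,\si)<\varmin(F_2,\si)\leqslant\varmax(F_d,\si)=\varmax(F,\si)$ via Definition \ref{defattached}(ii), contradicting the equality, while $d=1$ would give $(F,\si)=(F_1,\si)\in\mathscr{G}_\be$, contradicting minimality; so the attached case cannot occur. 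Thus $(F,\si)$ is skipped, say through $(F_i,\si_i)_{i=1}^d$; here $d\geqslant 2$ would give $\si\wedge\si_1\cneq\cdots\cneq\si\wedge\si_d$ by Definition \ref{defskipped}(iii), again forcing $\varmin(F,\si)<\varmax(F,\si)$, so $d=1$. Then $F=F_1$ and $(F,\si_1)=(F_1,\si_1)\in\mathscr{G}_\be$, so $\si^{\prime\prime}=\si_1$ is the point sought in (ii).
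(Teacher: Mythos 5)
Your proof is correct, and it takes a noticeably different route from the paper's, although both ultimately run on the same engine, Lemma \ref{littlelemma}: the hypothesis $\si\wedge\tau \cneq \si^\prime\wedge\tau$ pins $\si\wedge\tau$ to $\si\wedge\si^\prime$ for every $\tau\in F$. The paper exploits this locally inside the case analysis: in the skipped case it picks $\tau_1\in F_1$, $\tau_2\in F_2$, uses Lemma \ref{lemminmaxbetween} (iii) to get $\si\wedge\tau_1 \cneq \si\wedge\tau_2$, pins both to $\si\wedge\si^\prime$, and lands in the absurdity $\si\wedge\si^\prime \cneq \si\wedge\si^\prime$; the attached case is waved off ``using similar arguments and \ref{lemminmaxbetween} (iv)'', and the case $\al=1$ is declared easy. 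You instead extract a single global fact up front --- $|\si\wedge\tau| = |\si\wedge\si^\prime|$ for all $\tau\in F$, i.e.\ $\varmin(F,\si)=\varmax(F,\si)$ via Corollary \ref{corminmax} --- and then obtain all three contradictions uniformly from the strict increase of wedge lengths that Definitions \ref{defg1} (ii), \ref{defskipped} (iii) and \ref{defattached} (ii) build into any branching with $d\geqslant 2$, never invoking parts (iii)/(iv) of Lemma \ref{lemminmaxbetween} at all. This buys you a fully written-out $\al=1$ case, a uniform treatment, and the sharper observation that the attached alternative is outright impossible (its $d=1$ instance already contradicts the minimality of $\al$), whereas the paper's version is shorter where written but defers two cases to the reader. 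One small streamlining remark: your two-sided comparability argument for $\si\wedge\tau=\si\wedge\si^\prime$ can be compressed into a single application of Lemma \ref{littlelemma} with the roles $(\tau,\si^\prime,\si)$, since the hypothesis reads $\tau\wedge\si \cneq \tau\wedge\si^\prime$, which is exactly clause (i) there and yields $\tau\wedge\si = \si^\prime\wedge\si$ directly --- presumably the one-step application the paper has in mind.
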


\begin{proof}
The fact that $\al$ is not a limit ordinal number follows trivially from Definition \ref{defgn}.
The case $\al=1$ is easy, we shall therefore only prove the case $\al = \be + 1$. Since $(F,\si)\notin\mathscr{G}_{\be}$, it is either skipped or attached.

Assume first that there is $(F_i,\si_i)_{i=1}^d$ a skipped branching of $\si$ in $\mathscr{G}_{\be}$ with $F = \cup_{i=1}^dF_i$. If $d = 1$, then $\si^{\prime\prime} = \si_1$ is evidently the desired element of $\cantor$. We will therefore prove that $d=1$. Towards a contradiction, assume that $d\geqslant 2$ and choose $\tau_1\in F_1, \tau_2\in F_2$.

Lemma \ref{lemminmaxbetween} (iii) yields that $\si\wedge\tau_1 = \si\wedge\si_1 \cneq \si\wedge\si_2 = \si\wedge\tau_2$. By the assumption, $\si\wedge\tau_1 \cneq \si^\prime\wedge\tau_1$ and using Lemma \ref{littlelemma} we conclude that $\si\wedge\tau_1 = \si\wedge\si^\prime$. Similarly, we conclude that $\si\wedge\tau_2 = \si\wedge\si^\prime$. We have shown that $\si\wedge\si^\prime \cneq \si\wedge\si^\prime$, which is absurd.

If $(F,\si)$ is attached, then using similar arguments and \ref{lemminmaxbetween} (iv), one can prove the desired result.
\end{proof}

\begin{prp}
Let $\al$ be a countable ordinal number, $(F,\si)\in\mathscr{G}_\al$ and $G$ be a non empty subset of $F$. Then $(G,\si)\in\mathscr{G}_\al$.\label{prphereditary}
\end{prp}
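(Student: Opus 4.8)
The plan is to prove the statement by transfinite induction on $\al$. The recurring engine of the argument will be a monotonicity observation: by Corollary \ref{corminmax}, whenever $(H,\rho)$ and a restriction $(H',\rho)$ to a nonempty $H'\subseteq H$ both belong to the families, one has $\varmin(H',\rho)\geqslant\varmin(H,\rho)$ and $\varmax(H',\rho)\leqslant\varmax(H,\rho)$, since passing from $\min\{|\rho\wedge\tau|:\tau\in H\}$ (respectively $\max$) to a subset can only increase (respectively decrease) the value. This is precisely what lets every numerical constraint in the definitions survive restriction, so the induction will reduce in each case to exhibiting the obvious restricted data and checking the defining inequalities.

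For the base case $\al=1$, writing $F=\{\tau_i\}_{i=1}^d$ as in Definition \ref{defg1} and $G=\{\tau_{i_1},\ldots,\tau_{i_e}\}$ with $i_1<\cdots<i_e$, conditions (i) and (ii) pass verbatim to the subchain, while for (iii) I would use $e\leqslant d\leqslant|\si\wedge\tau_1|\leqslant|\si\wedge\tau_{i_1}|$, the final inequality holding because $\si\wedge\tau_1\ceq\si\wedge\tau_{i_1}$. For $\al$ a limit ordinal, if $(F,\si)\in\mathscr{G}_{\be_n}$ with $\varmin(F,\si)\geqslant n$, the inductive hypothesis gives $(G,\si)\in\mathscr{G}_{\be_n}$ and the monotonicity gives $\varmin(G,\si)\geqslant\varmin(F,\si)\geqslant n$, so $(G,\si)\in\mathscr{G}_\al$ by Definition \ref{defgn}.

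The core of the proof is the successor case $\al=\be+1$. If $(F,\si)\in\mathscr{G}_\be$, the inductive hypothesis together with Definition \ref{defgn}(i) settles it. Otherwise $(F,\si)$ is skipped or attached, arising from a branching $(F_i,\cdot)_{i=1}^d$ of $\si$ in $\mathscr{G}_\be$ with $F=\cup_{i=1}^dF_i$. I would put $G_i=G\cap F_i$, let $i_1<\cdots<i_e$ enumerate the indices with $G_{i_j}\neq\varnothing$, so that $G=\cup_{j=1}^eG_{i_j}$, and invoke the inductive hypothesis to get $(G_{i_j},\cdot)\in\mathscr{G}_\be$. The goal is then to show that the reindexed sequence $(G_{i_j},\cdot)_{j=1}^e$ is again a skipped (respectively attached) branching of $\si$, which yields $(G,\si)\in\mathscr{G}_\al$.

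The main obstacle, and the place where the monotonicity is indispensable, is verifying that the numerical clauses of Definitions \ref{defskipped} and \ref{defattached} persist after restriction and reindexing. In the skipped case, disjointness, the inequalities $\si\neq\si_{i_j}$, and the nesting $\si\wedge\si_{i_1}\cneq\cdots\cneq\si\wedge\si_{i_e}$ are inherited as a subchain; the delicate points are clause (iv), handled by $|\si\wedge\si_{i_j}|<\varmin(F_{i_j},\si_{i_j})\leqslant\varmin(G_{i_j},\si_{i_j})$, and clause (v), handled by $e\leqslant d\leqslant|\si\wedge\si_1|\leqslant|\si\wedge\si_{i_1}|$. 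In the attached case I would first note that the separation clause (ii) propagates from consecutive to arbitrary pairs, giving $\varmax(F_i,\si)<\varmin(F_{i'},\si)$ whenever $i<i'$ (chaining through $\varmin(F_k,\si)\leqslant\varmax(F_k,\si)$); combined with monotonicity this gives $\varmax(G_{i_j},\si)\leqslant\varmax(F_{i_j},\si)<\varmin(F_{i_{j+1}},\si)\leqslant\varmin(G_{i_{j+1}},\si)$, and $e\leqslant d\leqslant\varmin(F_1,\si)\leqslant\varmin(G_{i_1},\si)$ disposes of clause (iii). Once these inequalities are in hand, the reindexed sequence satisfies every requirement of the relevant definition, closing the induction.
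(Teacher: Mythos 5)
Your proposal is correct and follows essentially the same route as the paper: transfinite induction, intersecting $G$ with the branching sets $F_i$, reindexing the nonempty pieces, and using Corollary \ref{corminmax} to see that $\varmin$ can only increase (and $\varmax$ decrease) under restriction, which preserves clauses (iv)--(v) of Definition \ref{defskipped} and (ii)--(iii) of Definition \ref{defattached}. The paper leaves the attached case and the limit case as ``similar reasoning,'' which you carry out explicitly; the inequalities you supply are exactly the ones the paper uses in the skipped case.
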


\begin{proof}
We proceed by transfinite induction. For $\al=1$ the result easily follows from the definition of $\mathscr{G}_1$. Assume that the statement is true for every $\be<\al$. The case when $\al$ is a limit ordinal number is an easy consequence of the inductive assumption and Corollary \ref{corminmax}. Assume therefore that $\al = \be + 1$ and let $(F,\si)$ be in $\mathscr{G}_{\al}$ and $G\subset F$.

Consider first the case, when $(F,\si)$ is skipped and $(F_i)_{i=1}^d$ be a skipped branching of $\si$ in $\mathscr{G}_\be$, such that $F = \cup_{i=1}^dF_i$.

Set $\{i_1<\cdots<i_p\} = \{i\in\{1,\ldots,d\}: G\cap F_i\neq\varnothing\}$ and $G_j = G\cap F_{i_j}$ for $j=1,\ldots,p$. By the inductive assumption, $(G_j,\si_{i_j})$ is in $\mathscr{G}_\be$ for $j=1,\ldots,p$ and, evidently, it is enough to show that $(G_j,\si_{i_j})_{j=1}^p$ is a skipped branching of $\si$.

Obviously, assumptions (i), (ii) and (iii) from Definition \ref{defskipped} are satisfied.

Corollary \ref{corminmax} yields that $\varmin(F_{i_j},\si_{i_j}) \leqslant \varmin(G_j,\si_{i_j})$ and hence (iv) is satisfied. Moreover $p\leqslant d \leqslant |\si\wedge\si_1| \leqslant |\si\wedge\si_{i_1}|$, which means that (v) is also satisfied.

If on the other hand $(F,\si)$ is attached, using similar reasoning and Corollary \ref{corminmax}, the desired result can be easily proven.

\end{proof}

We are now ready to define the families $\mathcal{G}_\al$, for $\al<\omega_1$ and prove their main properties.

\begin{dfn}
For a countable ordinal number $\al$ we define
\begin{equation*}
\mathcal{G}_\al = \{F\subset\cantor:\;\text{there exists}\;\si\in\cantor\;\text{with}\;(F,\si)\in\mathscr{G}_\al\}\cup\{\varnothing\}
\end{equation*}\label{defggn}
\end{dfn}

\begin{rmk}
It is clear that $\{\mathcal{G}_n\}_{n<\omega}$ is an increasing family of finite subsets of $\cantor$. Proposition \ref{prphereditary} also yields that $\mathcal{G}_\al$ is hereditary for all $\al<\omega_1$.\label{rmkincr}
\end{rmk}

\begin{prp}
Let $\al$ be a countable ordinal number. Then $\mathcal{G}_\al$ is $\al$-large. In particular, for every $B$ infinite subset of $\cantor$ there exists a one to one map $\phi:\mathbb{N}\rightarrow B$ with $\phi(F)\in\mathcal{G}_\al$ for every $F\in\mathcal{S}_\al$ and $\al<\omega_1$.\label{prpnlarge}
\end{prp}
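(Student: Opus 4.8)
The plan is to prove, by transfinite induction on $\al$, a statement slightly stronger than the bare definition of $\al$-largeness, tailored to the recursive structure of the families $\mathscr{G}_\al$. First I would fix a single point $\si\in\cantor$ together with a sequence $\{\tau_n\}_n$ in $\cantor\setminus\{\si\}$ whose agreements $|\si\wedge\tau_n|$ are strictly increasing and satisfy $|\si\wedge\tau_n|\geqslant n$ for every $n$, and set $\phi(n)=\tau_n$. Note that $\phi$ is automatically one to one, since distinct values of $|\si\wedge\tau_n|$ force the $\tau_n$ to be distinct. The inductive claim is then: for every countable ordinal $\al$ and every nonempty $F\in\mathcal{S}_\al$ one has $(\phi(F),\si)\in\mathscr{G}_\al$. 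The crucial simplification afforded by a fixed $\si$ is that the whole construction can be carried out using only \emph{attached} branchings (Definition \ref{defattached}), all sharing this common $\si$; skipped branchings play no role here.

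For the base case $\al=1$, if $F=\{n_1<\cdots<n_d\}\in\mathcal{S}_1$ then $d\leqslant n_1$, and the conditions of Definition \ref{defg1} for $(\phi(F),\si)$ follow at once: the $\si\wedge\tau_{n_i}$ are initial segments of $\si$ of strictly increasing length, hence $\si\wedge\tau_{n_1}\cneq\cdots\cneq\si\wedge\tau_{n_d}$, and $d\leqslant n_1\leqslant|\si\wedge\tau_{n_1}|$. For the successor step $\al=\be+1$, write $F=\cup_{i=1}^dF_i$ with $F_1<\cdots<F_d$, $F_i\in\mathcal{S}_\be$ nonempty and $d\leqslant\min F_1$. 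By the inductive hypothesis $(\phi(F_i),\si)\in\mathscr{G}_\be$ for each $i$, and I would check that $(\phi(F_i),\si)_{i=1}^d$ is an attached branching of $\si$: disjointness follows from injectivity of $\phi$; Corollary \ref{corminmax} gives $\varmax(\phi(F_i),\si)=|\si\wedge\tau_{\max F_i}|$ and $\varmin(\phi(F_{i+1}),\si)=|\si\wedge\tau_{\min F_{i+1}}|$, and since $\max F_i<\min F_{i+1}$ the strict monotonicity of the agreements yields $\varmax(\phi(F_i),\si)<\varmin(\phi(F_{i+1}),\si)$; finally $d\leqslant\min F_1\leqslant|\si\wedge\tau_{\min F_1}|=\varmin(\phi(F_1),\si)$. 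Thus $(\phi(F),\si)\in\mathscr{G}_{\be+1}$ by Definition \ref{defgn}(iii). The limit case uses Remark \ref{remgs}: choosing $\{\be_n\}_n$ so that both limit decompositions hold simultaneously, any nonempty $F\in\mathcal{S}_\al$ lies in some $\mathcal{S}_{\be_n}$ with $\min F\geqslant n$; the inductive hypothesis gives $(\phi(F),\si)\in\mathscr{G}_{\be_n}$, while $\varmin(\phi(F),\si)=|\si\wedge\tau_{\min F}|\geqslant\min F\geqslant n$, so $(\phi(F),\si)\in\mathscr{G}_\al$.

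It then remains to produce, from an arbitrary infinite $B\subseteq\cantor$, a point $\si$ and a sequence as above lying in $B$. Since $\cantor$ is compact and metrizable, $B$ has a limit point $\si\in\cantor$; as $\si$ is still a limit point of $B\setminus\{\si\}$, I may pick distinct $\tau_n\in B\setminus\{\si\}$ with $\tau_n\to\si$. Then $|\si\wedge\tau_n|\to\infty$, and after passing to a subsequence and discarding finitely many terms the agreements are strictly increasing positive integers, whence $|\si\wedge\tau_n|\geqslant n$. The resulting map $\phi$, $\phi(n)=\tau_n$, sends $\mathbb{N}$ into $B$, is one to one, and by the inductive claim satisfies $\phi(F)\in\mathcal{G}_\al$ for every $F\in\mathcal{S}_\al$, which is exactly $\al$-largeness. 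The only genuinely delicate point is the bookkeeping in framing the inductive statement: recognizing that a single common $\si$ together with the two numerical conditions on $|\si\wedge\tau_n|$ (strict monotonicity, and domination of $n$) is precisely what transfers the Schreier admissibility conditions on $\min F$ into the $\varmin$ conditions governing attached branchings. Once this is in place, each inductive step reduces to a direct application of Corollary \ref{corminmax}.
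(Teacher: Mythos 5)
Your proof is correct and takes essentially the same route as the paper's: a limit point $\si$ of $B$ together with a sequence $\{\tau_n\}_n$ in $B$ whose agreements $|\si\wedge\tau_n|$ are strictly increasing, the map $\phi(n)=\tau_n$, and a transfinite induction using only attached branchings of the fixed $\si$, with Remark \ref{remgs} handling limit ordinals exactly as in the paper. The only (cosmetic) difference is that you extract the identities $\varmin(\phi(F),\si)=|\si\wedge\tau_{\min F}|$ and $\varmax(\phi(F),\si)=|\si\wedge\tau_{\max F}|$ from Corollary \ref{corminmax} and monotonicity, whereas the paper carries them along as a second clause of its inductive statement.
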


\begin{proof}
Let $B$ be an infinite subset of $\cantor$. Choose $\{\tau_k\}_k$ pairwise disjoint elements of $B$ and $\si\in\cantor$, with $\lim_k\tau_k=\si$, such that $\si\wedge\tau_k \cneq \si\wedge\tau_{k+1}$ for all $k\inn$. Define $\phi:\mathbb{N}\rightarrow B$, with $\phi(k) = \tau_k$.

We shall inductively prove that for every $\al<\omega_1$ and $F\in\mathcal{S}_\al$, the following holds.

\begin{itemize}

\item[(i)] $(\phi(F),\si)\in\mathscr{G}_\al$

\item[(ii)] $\varmin(\phi(F),\si) = |\si\wedge\tau_{\min F}|$ and $\varmax(\phi(F),\si) = |\si\wedge\tau_{\max F}|$

\end{itemize}

The case $\al=1$ can be easily derived from the definition of $\mathscr{G}_1$. Assume now that $\al$ is a countable ordinal number and that the statement is true for every $F\in\mathcal{S}_\be$ and $\be<\al$.

We treat first the case when $\al$ is a limit ordinal number. Choose $\{\be_n\}_n$ a strictly increasing
sequence of ordinal numbers with $\sup_n\be_n = \al$, such that
\begin{equation*}
\mathscr{G}_\al = \bigcup_{n=1}^\infty\big\{(G,\si^\prime)\in\mathscr{G}_{\be_n}:\; \varmin(G,\si^\prime)\geqslant n\big\}
\end{equation*}
as well as
\begin{equation*}
\mathcal{S}_\al = \bigcup_{n=1}^\infty\big\{F\in\mathcal{S}_{\be_n}:\; \min F\geqslant n\big\}.
\end{equation*}
Then, if $F\in\mathcal{S}_\al$, there exists $n\inn$ with $F\in \mathcal{S}_{\be_n}$ and $\min F \geqslant n$. The inductive assumption yields that $(\phi(F),\si)\in\mathscr{G}_{\be_n}$ and $\varmin(\phi(F),\si) = |\si\wedge\tau_{\min F}|\geqslant \min F \geqslant n$. We conclude that $(\phi(F),\si)\in\mathscr{G}_{\al}$ and, of course, $\varmin(\phi(F),\si) = |\si\wedge\tau_{\min F}|$.

Assume now that $\al = \be+1$ and let $F\in\mathcal{S}_{\al}$. Then there exist $\min F \leqslant F_1 <\cdots < F_d$ in $\mathcal{S}_\be$ with $F = \cup_{i=1}^dF_i$.

The inductive assumption yields that $(\phi(F_i),\si)_{i=1}^d$ is an attached branching of $\si$ in $\mathscr{G}_\be$ and hence $(\phi(F),\si)\in\mathscr{G}_{\al}$.

Moreover, $\varmin(\phi(F),\si) = \varmin(\phi(F_1), \si) = |\si\wedge\tau_{\min F_1}| = |\si\wedge\tau_{\min F}|$. Similarly, we conclude that $\varmax(\phi(F),\si) = |\si\wedge\tau_{\max F}|$.

\end{proof}

\begin{rmk}
With a little more effort, it can be proven that for $\al<\omega_1$, $\mathcal{G}_\al$ is not $\al+1$-large. In particular, there does not exists a one to one map $\phi:\mathbb{N}\rightarrow\cantor$, such that $\phi(F)\in\mathcal{G}_\al$, for every $F\in\mathcal{S}_{\al+1}$. To be even more precise, for every $A$ infinite subset of $\cantor$, there exists $B$ a countable subset of $A$, such that the Cantor-Bendixson index of $\mathcal{G}_\al\upharpoonright B$ is equal to $\omega^\al + 1$ for all $\al<\omega_1$. Since we do not make use of this fact, we omit the proof.
\end{rmk}

The main result concerning the families $\mathcal{G}_\al$, $\al<\omega_1$ is the following.

\begin{thm}
Let $\al$ be a countable ordinal number. Then $\mathcal{G}_\al$ is an $\al$-large, hereditary and compact family of finite subsets of $\cantor$.\label{thmcompact}
\end{thm}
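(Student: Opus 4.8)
The plan is to treat the three asserted properties separately. That $\mathcal{G}_\al$ is $\al$-large is exactly Proposition \ref{prpnlarge}; that it is hereditary is Remark \ref{rmkincr} (via Proposition \ref{prphereditary}); and that it consists of finite subsets of $\cantor$ is immediate from Definition \ref{defggn} together with $\mathscr{G}_\al\subseteq[\cantor]^{<\omega}\times\cantor$. So the only genuine task is compactness, i.e. that $\mathcal{G}_\al$ is closed in $\{0,1\}^{\cantor}$ with the product topology. Here I would first record a standard reduction: since $\mathcal{G}_\al$ is hereditary and all its members are finite, a \emph{finite} set in the closure already lies in $\mathcal{G}_\al$ (a basic neighbourhood produces some $H\in\mathcal{G}_\al$ with $F\subseteq H$, whence $F\in\mathcal{G}_\al$). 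Consequently $\mathcal{G}_\al$ is non-compact exactly when some \emph{infinite} $F\subseteq\cantor$ satisfies $[F]^{<\omega}\subseteq\mathcal{G}_\al$, so it suffices to show that no infinite subset of $\cantor$ has all of its finite subsets in $\mathcal{G}_\al$.

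Assume for contradiction that such an infinite $F$ exists. Using that $\cantor$ is compact metrizable, I would extract from $F$ a sequence $\{\rho_k\}_k$ of distinct points with $\rho_k\to\rho$ and, after passing to a subsequence, with $\rho\wedge\rho_1\cneq\rho\wedge\rho_2\cneq\cdots$; still $[\{\rho_k\}_k]^{<\omega}\subseteq\mathcal{G}_\al$. Writing $a_k=|\rho\wedge\rho_k|$ (strictly increasing), Lemma \ref{littlelemma} applied to $\rho,\rho_k,\rho_\ell$ gives $|\rho_k\wedge\rho_\ell|=a_k$ for $k<\ell$, so $\rho_k\mapsto a_k$ is an order isomorphism onto $M:=\{a_k:k\inn\}$, and the minimal pairwise split of any $G\subseteq\{\rho_k\}_k$ equals the split $a_k$ of its lowest-index point. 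The crux is then the statement, proved by transfinite induction on $\al$, that for every $(G,\si)\in\mathscr{G}_\al$ with $G\subseteq\{\rho_k\}_k$ one has $\Phi(G):=\{|\rho\wedge\tau|:\tau\in G\}\in\mathcal{S}_\al$. Granting it, the contradiction is immediate: since $\mathcal{S}_\al$ is compact (its Cantor--Bendixson index is $\omega^\al+1$), not every finite subset of $M$ lies in $\mathcal{S}_\al$, so picking finite $T\subseteq M$ with $T\notin\mathcal{S}_\al$, the set $\Phi^{-1}(T)$ is a finite subset of $\{\rho_k\}_k$ that cannot belong to $\mathcal{G}_\al$, contradicting $[\{\rho_k\}_k]^{<\omega}\subseteq\mathcal{G}_\al$.

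For the induction, the base case $\al=1$ is read off Definition \ref{defg1}: the constraint $d\leq\varmin(G,\si)$ together with Corollary \ref{corminbound} yields $\#\Phi(G)=\#G\leq\min\Phi(G)$, i.e. $\Phi(G)\in\mathcal{S}_1$ (note $\Phi$ is injective on $\{\rho_k\}_k$ since the $a_k$ are distinct). The limit case follows at once from Definition \ref{defgn} and the inductive hypothesis. In the successor case $\al=\be+1$ I would apply the inductive hypothesis to each branch $(G_i,\cdot)\in\mathscr{G}_\be$ to get $\Phi(G_i)\in\mathcal{S}_\be$, bound the number $d$ of branches by $\min\Phi(G)$ using Remark \ref{rmkdmin} and Corollaries \ref{corminmax}, \ref{corminbound}, and use Lemma \ref{lemminmaxbetween} to position the branches, thereby assembling $\Phi(G)$ as a witness for membership in $\mathcal{S}_{\be+1}$.

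I expect the successor case to be the main obstacle, and for two distinct reasons. In the \emph{skipped} case (Lemma \ref{lemminmaxbetween}(iii)) all points of a branch $G_i$ share the single value $|\si\wedge\si_i|$, so the witness $\si$ collapses each branch to one level; this is harmless for $\Phi$, because the points of $G_i$ still carry distinct splits, the inductive hypothesis is applied with the branch's own witness $\si_i$, and one checks (via $\varmin(G_i,\si_i)>|\si\wedge\si_i|$) that the low branches are split-singletons strictly below the single high branch, so the pieces of $\Phi(G)$ are split-successive with $d\leq\min\Phi(G)$. The delicate point is the \emph{attached} case, where the orderings induced by $\tau\mapsto|\si\wedge\tau|$ and by $\tau\mapsto|\rho\wedge\tau|$ need not coincide, so a priori the sets $\Phi(G_i)$ need not be successive in the split ordering. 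The saving feature is rigid: among $\{\rho_k\}_k$ at most one point can have split smaller than $|\si\wedge\rho|$ yet $\si$-level larger than $|\si\wedge\rho|$, so $\Phi(G)$ can differ from the admissible level set $\{|\si\wedge\tau|:\tau\in G\}$ by at most one misplaced element. I would dispose of this single discrepancy using the spreading and hereditary properties of $\mathcal{S}_\al$ together with the inequality $d\leq\varmin(G,\si)\leq\min\Phi(G)$ from Corollary \ref{corminbound}; carrying this bookkeeping cleanly through the transfinite recursion is the part that requires the most care.
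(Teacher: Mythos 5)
Your reduction of compactness to ``no infinite subset of $\cantor$ has all of its finite subsets in $\mathcal{G}_\al$'', the extraction of $\{\rho_k\}_k$ with strictly increasing splits, the base case, the limit case, and the skipped half of the successor case are all sound (in the limit case one needs $\min\Phi(G)\geqslant\varmin(G,\si)$ when $\#G\geqslant 2$, which does hold: at most one point of $G$ can have $\si$-level above $c:=|\si\wedge\rho|$, since by Lemma \ref{littlelemma} any such point has $\rho$-split exactly $c$ and the $\rho$-splits in $\{\rho_k\}_k$ are distinct). The genuine gap sits exactly where you locate the difficulty, the \emph{attached} successor case, and your patch is incorrect as stated. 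The claim that $\Phi(G)$ differs from the level set $\{|\si\wedge\tau|:\tau\in G\}$ by at most one misplaced element fails once $\al\geqslant 3$: a branch $(G_i,\si)\in\mathscr{G}_\be$ with $\be\geqslant 2$ may contain a skipped sub-configuration, hence a whole cluster $D$ of points with $\si\wedge\tau=\si\wedge\rho$ for all $\tau\in D$ (Lemma \ref{lemminmaxbetween}(iii)); the level set records $D$ as the single value $c$, while $\Phi$ spreads $D$ over $\#D$ distinct values above $c$ (binary alphabet forces $|\rho\wedge\tau|>c$ for $\tau\in D$), interleaved with the at most one point $\tau^*$ of $\si$-level $s^*>c$ whose $\Phi$-value is exactly $c$. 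Consequently the sets $\Phi(G_i)$ need not be successive, and after cutting the offending branch at $c$ you obtain up to $d+2$ successive pieces where the definition of $\mathcal{S}_{\be+1}$ allows only $d\leqslant\min\Phi(G)$; moreover spreading cannot absorb this, since spreading only moves elements \emph{upward}, whereas the value of $\tau^*$ moves \emph{down} (from $s^*$ to $c$). A correct treatment of clusters must re-enter the recursion with a changed witness (via Lemma \ref{lemnminusone}, $(D,\si'')\in\mathscr{G}_\gamma$ for some $\gamma<\be$) and track piece counts through the transfinite induction; none of that is in your sketch. One saving observation: your final contradiction uses only that $\mathcal{S}_\al$ is compact and hereditary, so it would suffice to prove $\Phi(G)\in\mathcal{K}_\al$ for some fixed compact hereditary family with slack (say, permitting $\min F_1+1$ pieces at each successor stage); but that repaired claim still has to be proved, and it is the heart of the matter.

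For comparison, the paper's proof avoids any transfer onto $\mathcal{S}_\al$ and is far shorter: with $F=\{\tau_i\}_i$ infinite and $F_k=\{\tau_1,\ldots,\tau_k\}\in\mathcal{G}_\al$ with witnesses $\si_k$, Corollary \ref{corminbound} freezes $\varmin(F_k,\si_k)\leqslant|\tau_1\wedge\tau_2|=:d$ uniformly in $k$. Remark \ref{rmkdmin} then kills the case $\al=1$ ($\#F_k\leqslant d$); at a limit $\al$ the bound pigeonholes all $(F_k,\si_k)$ into a single $\mathscr{G}_{\be_{n_0}}$, contradicting inductive compactness; and at $\al=\be+1$ each $F_k$ splits into at most $d$ pairwise disjoint members of $\mathcal{G}_\be$, so after fixing the number of pieces, inductive compactness of $\mathcal{G}_\be$ (everything lies inside the countable set $F$, so the relevant topology is metrizable and limits of each piece exist along a subsequence and are finite sets in $\mathcal{G}_\be$) exhibits $F$ as a finite union of finite sets --- a contradiction. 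Your $\Phi$-claim, if completed, would prove strictly more than compactness (essentially the unproved remark following Proposition \ref{prpnlarge}, that the Cantor--Bendixson index of $\mathcal{G}_\al$ restricted to suitable countable sets is exactly $\omega^\al+1$), which is why it is so much harder than the theorem requires.
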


\begin{proof}

All we need to prove, is that $\mathcal{G}_\al$ is compact and we do so by transfinite induction. Let us first treat the case $\al = 1$ and assume $F$ is in the closure of $\mathcal{G}_1$.

If $F$ is finite, since $\mathcal{G}_1$ is hereditary, then $F\in\mathcal{G}_1$. It is therefore sufficient to show that $F$ cannot be infinite. Since $\mathcal{G}_1$ is hereditary, we may assume that $F$ is countable and let $\{\tau_i: i\inn\}$ be an enumeration of $F$.

We conclude, that setting $F_k = \{\tau_i: i=1,\ldots,k\}$, then $F_k\in\mathcal{G}_1$ and $\#F_k = k$. Choose $\{\si_k\}_k$ a sequence in $\cantor$ such that $(F_k,\si_k)\in\mathscr{G}_1$ for all $k$.

Remark \ref{rmkdmin} yields that $k\leqslant\varmin(F_k,\si_k)$ for all $k$. On the other hand, by Corollary \ref{corminbound} we have that $\varmin(F_k,\si_k) \leqslant |\tau_1\wedge\tau_2|$. We conclude that $k\leqslant |\tau_1\wedge\tau_2|$ for all $k\inn$, which is obviously not possible.

Assuming now that $\al$ is a countable ordinal number such that $\mathcal{G}_\be$ is compact for every $\be<\al$, we will show that the same is true for $\mathcal{G}_{\al}$.

We treat first the case in which $\al$ is a limit ordinal number. Fix $\{\be_n\}_n$ a strictly increasing sequence of ordinal numbers with $\sup_n\be_n = \al$ such that
\begin{equation*}
\mathscr{G}_\al = \bigcup_{n=1}^\infty\big\{(F,\si)\in\mathscr{G}_{\be_n}:\;\varmin(F,\si)\geqslant n\big\}
\end{equation*}

Let $F$ be in the closure of $\mathcal{G}_{\al}$. As previously, if $F$ is finite then it is in $\mathcal{G}_{\al}$ and it is therefore enough to show that $F$ cannot be infinite. Once more, we may assume that $F = \{\tau_i: i\inn\}$. Setting $F_k = \{\tau_1,\ldots,\tau_k\}$, we have that $F_k\in\mathcal{G}_\al$, therefore there exist $\{\si_k\}_k$, with $(F_k,\si_k)\in\mathscr{G}_\al$.

Using Corollary \ref{corminbound} we have that $\varmin(F_k,\si_k) \leqslant |\tau_1\wedge\tau_2| = d$. In other words, $(F_k,\si_k)\in\mathscr{G}_{\be_{n_k}}$, with $n_k\leqslant d$ for all $k$. Passing, if necessary, to a subsequence, we have that $(F_k,\si_k)\in\mathscr{G}_{\be_{n_0}}$, for all $k$. We conclude that $F\in\mathcal{G}_{\be_{n_0}}$, in other words $\mathcal{G}_{\be_{n_0}}$ is not compact, which is absurd.

Assume now that $\al = \be + 1$. Let $F$ be in the closure of $\mathcal{G}_{\al}$. As previously, it is enough to show that $F$ cannot be infinite. Once more, we may assume that $F = \{\tau_i: i\inn\}$.

Set $F_k = \{\tau_i: i=1,\ldots,k\}$, for all $k$. Then $F_k\in\mathcal{G}_{\al}$, i.e. there exists $\si_k$ such that $(F_k,\si_k)\in\mathscr{G}_{\al}$. Setting $d = |\tau_1\wedge\tau_2|$, Corollary \ref{corminbound}, yields the following.
\begin{equation}
\varmin(F_k,\si_k)\leqslant d\quad\text{for all}\;k\label{eq1prpcompact}
\end{equation}
By Definition \ref{defgn}, Remark \ref{rmkdmin} and \eqref{eq1prpcompact}, for every $k\inn$, there exist $\{F_j^k\}_{j=1}^{m_k}$ pairwise disjoint sets in $\mathcal{G}_\be$, with $F_k = \cup_{j=1}^{m_k}F_j^k$ and $m_k\leqslant d$. Passing to a subsequence, we may assume that $m_k = m$, for all $k$.

By the compactness of $\mathcal{G}_\be$, we may pass to a further subsequence and find $G_1, G_2,\ldots,G_m\in\mathcal{G}_\be$, such that $\lim_kF_j^k = G_j$, for $j=1,\ldots,m$.

We conclude that $F = \lim_kF_k = \lim_k(\cup_{j=1}^mF_j^k) = \cup_{j=1}^mG_j$. Since $\cup_{j=1}^mG_j$ is a finite set, this cannot be the case.

\end{proof}

Although the initial motivation behind the definition of the $\mathcal{G}_\al$ families was the construction of a non-separable reflexive space with $\ell_1$ as a unique spreading model, we believe that they are of independent interest, as they retain many of the properties of the families $\mathcal{S}_\al$. They are therefore a version of these families, defined on the Cantor set $\cantor$. We present a few more properties the $\mathcal{G}_\al$ have in common with the $\mathcal{S}_\al$.

\begin{lem}
Let $\al<\be$ be countable ordinal numbers. Then there exists $n\inn$ such that $\{(F,\si)\in\mathscr{G}_\al:\;\varmin(F,\si)\geqslant n\} \subset \mathscr{G}_\be$.\label{lemgeqn}
\end{lem}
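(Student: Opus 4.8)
The plan is to prove the statement by transfinite induction on $\be$, with $\al$ ranging freely below it, exploiting the analogous nesting property of the Schreier families $\mathcal{S}_\al$ and the parallel recursive structure of the $\mathscr{G}_\al$. The target inclusion $\{(F,\si)\in\mathscr{G}_\al:\;\varmin(F,\si)\geqslant n\}\subset\mathscr{G}_\be$ is the $\mathscr{G}$-analogue of the standard fact that for $\al<\be$ one has $\{F\in\mathcal{S}_\al:\;\min F\geqslant n\}\subset\mathcal{S}_\be$ for some $n$; the threshold $\varmin(F,\si)$ plays exactly the role that $\min F$ plays for Schreier families, and Remark \ref{remgs} was set up precisely so that these two hierarchies are indexed compatibly. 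So the whole proof is a matter of tracking how $\varmin$ behaves under the three generating operations (skipped branchings, attached branchings, and the limit-stage union).

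First I would dispose of the base and limit reductions. The nontrivial content is when $\be$ is a successor, say $\be=\gamma+1$, since if $\be$ is a limit I can use the definition $\mathscr{G}_\be=\bigcup_n\{(F,\si)\in\mathscr{G}_{\be_n}:\;\varmin(F,\si)\geqslant n\}$: given $\al<\be$ pick $\be_n>\al$, apply the inductive hypothesis to get $m$ with $\{(F,\si)\in\mathscr{G}_\al:\;\varmin(F,\si)\geqslant m\}\subset\mathscr{G}_{\be_n}$, and then take $n'=\max\{m,n\}$ so that the $\varmin\geqslant n'$ restriction simultaneously lands in $\mathscr{G}_{\be_n}$ and satisfies the threshold $n$ required by the limit-stage definition of $\mathscr{G}_\be$. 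For the successor case it suffices by induction to prove the single step $\mathscr{G}_\gamma\rightsquigarrow\mathscr{G}_{\gamma+1}$, i.e. that $\{(F,\si)\in\mathscr{G}_\gamma:\;\varmin(F,\si)\geqslant n\}\subset\mathscr{G}_{\gamma+1}$ for a suitable $n$ (in fact, inspecting Definition \ref{defgn}(i), every element of $\mathscr{G}_\gamma$ already lies in $\mathscr{G}_{\gamma+1}$, so here $n=1$ works); composing finitely many such one-step inclusions and absorbing the thresholds handles any $\al<\be$ with $\be$ a successor. I would then reduce the general $\al<\be$ to a finite chain of successor steps interleaved with the limit-stage argument above, taking the maximum of the finitely many thresholds produced.

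The one genuinely substantive point — and what I expect to be the main obstacle — is making the threshold bookkeeping uniform across the limit stages, because at a limit $\be$ the comparison passes through a single $\be_n$ from the fixed fundamental sequence, and one must check that the $\varmin\geqslant n$ cutoff demanded by the definition of $\mathscr{G}_\be$ is compatible with, and not larger than, whatever cutoff the inductive hypothesis supplies for the inclusion into $\mathscr{G}_{\be_n}$. The clean way to organize this is to strengthen the induction hypothesis so that it simultaneously asserts the inclusion for all $\al<\be$ with an explicitly monotone dependence of $n$ on $\al$; then at each limit stage the required threshold is simply $\max$ of the finitely (or, after choosing $\be_n$, boundedly) many relevant values. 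Since the inclusion $\mathscr{G}_\gamma\subset\mathscr{G}_{\gamma+1}$ at successor steps is an outright containment requiring no threshold, the only thresholds ever introduced come from limit stages, and there are at most finitely many limit ordinals in any descent from $\be$ to $\al$ that need be traversed in a single inductive layer; taking the maximum of these yields the desired $n$.
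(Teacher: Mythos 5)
Your proposal is correct and takes essentially the same route as the paper's proof: transfinite induction on $\be$ (the paper fixes $\al$ and inducts on $\be$), with the successor case disposed of by the outright containment $\mathscr{G}_\gamma\subset\mathscr{G}_{\gamma+1}$ from Definition \ref{defgn}(i), and the limit case handled exactly as you describe, by choosing $\be_{k_0}>\al$ in the fixed fundamental sequence, obtaining $m$ from the inductive hypothesis, and setting $n=\max\{k_0,m\}$. The only difference is cosmetic: your proposed strengthening to a monotone threshold function (and the ``finite chain of steps'' picture) is unnecessary, since each pair $(\al,\be)$ requires only its own $n$ --- note that $\varmin(F,\si)$ is level-independent by Corollary \ref{corminmax}, so the threshold bookkeeping closes without any uniformity.
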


\begin{proof}
Fix $\al$ a countable ordinal number. We prove this proposition by means of transfinite induction, starting with $\be = \al + 1$. In this case the result follows from the definition of $\mathscr{G}_\be$, for $n = 1$.

Assume now that $\be$ is a countable ordinal number with $\al<\be$, such that the statement holds for every $\al<\gamma<\be$. If $\be = \gamma + 1$, by the inductive assumption, there exists $n\inn$, such that $\{(F,\si)\in\mathscr{G}_\al:\;\varmin(F,\si)\geqslant n\} \subset \mathscr{G}_\gamma$. Evidently, we also have that $\{(F,\si)\in\mathscr{G}_\al:\;\varmin(F,\si)\geqslant n\} \subset \mathscr{G}_\be$.

If $\be$ is a limit ordinal number, fix $\{\be_k\}_k$ a strictly increasing sequence of ordinal numbers, such that $\be = \lim_k\be_k$ and
\begin{equation*}
\mathscr{G}_\be = \bigcup_k\big\{(F,\si)\in\mathscr{G}_{\be_k}:\;\varmin(F,\si)\geqslant k\big\}
\end{equation*}
Choose $k_0\inn$ with $\al < \be_{k_0}$. By the inductive assumption, there exists $m\inn$, such that $\{(F,\si)\in\mathscr{G}_\al:\;\varmin(F,\si)\geqslant m\} \subset \mathscr{G}_{\be_{k_0}}$. Setting $n = \max\{k_0,m\}$, we have the desired result.

\end{proof}

\begin{lem}
Let $\al<\be$ be countable ordinal numbers. Then there exists $n\inn\cup\{0\}$ such that $\mathscr{G}_\al \subset \mathscr{G}_{\be + n}$.\label{lemplusn}
\end{lem}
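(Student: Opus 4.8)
The plan is to prove $Q(\al)$: \emph{for every $\be>\al$ there is $n\geq 0$ with $\mathscr{G}_\al\subseteq\mathscr{G}_{\be+n}$}, by transfinite induction on $\al$. Two observations organize the whole argument. First, Definition \ref{defgn}(i) gives $\mathscr{G}_\gamma\subseteq\mathscr{G}_{\gamma+1}$ for every $\gamma$, so the tower $\mathscr{G}_\be\subseteq\mathscr{G}_{\be+1}\subseteq\mathscr{G}_{\be+2}\subseteq\cdots$ is increasing; it therefore suffices to produce any single adequate $n$, and finite maxima of admissible exponents remain admissible. Second, by Corollary \ref{corminmax} the quantities $\varmin(F,\si)$ and $\varmax(F,\si)$ are intrinsic to the pair $(F,\si)$ and independent of which family we regard it as lying in; hence conditions (i)--(v) of Definition \ref{defskipped} and (i)--(iii) of Definition \ref{defattached} are purely geometric. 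Consequently a skipped (resp. attached) branching of $\si$ in $\mathscr{G}_\gamma$ remains one in any larger $\mathscr{G}_{\gamma'}$, provided only that each constituent pair already lies in $\mathscr{G}_{\gamma'}$. This \emph{transplantation principle} is the engine of the successor step.

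For the successor case $\al=\al'+1$, I would fix, by $Q(\al')$ (legitimate as $\al'<\al<\be$), an $n'$ with $\mathscr{G}_{\al'}\subseteq\mathscr{G}_{\be+n'}$. Given $(F,\si)\in\mathscr{G}_\al$: if $(F,\si)\in\mathscr{G}_{\al'}$ it already sits in $\mathscr{G}_{\be+n'}$; otherwise it is skipped or attached, so there is a skipped branching $(F_i,\si_i)_{i=1}^d$ (resp. attached branching $(F_i,\si)_{i=1}^d$) of $\si$ in $\mathscr{G}_{\al'}$ with $F=\cup_iF_i$. By $Q(\al')$ each constituent lies in $\mathscr{G}_{\be+n'}$, and by the transplantation principle the branching conditions persist, so the same sequence is a branching of $\si$ in $\mathscr{G}_{\be+n'}$; then Definition \ref{defgn}(ii)--(iii), applied with parameter $\be+n'$, puts $(F,\si)$ in $\mathscr{G}_{(\be+n')+1}=\mathscr{G}_{\be+n'+1}$. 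Thus $n=n'+1$ works uniformly.

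For the limit case, Lemma \ref{lemgeqn} is essential. Writing $\mathscr{G}_\al=\bigcup_k\{(F,\si)\in\mathscr{G}_{\be_k}:\varmin(F,\si)\geq k\}$, I would fix $n_0$ with $\{(F,\si)\in\mathscr{G}_\al:\varmin(F,\si)\geq n_0\}\subseteq\mathscr{G}_\be$. Pairs with $\varmin\geq n_0$ are then already in $\mathscr{G}_\be$. Any remaining pair has $\varmin(F,\si)<n_0$ and hence belongs to $\mathscr{G}_{\be_k}$ for some $k\leq\varmin(F,\si)<n_0$; that is, the exceptional pairs all lie in the \emph{finite} union $\bigcup_{k<n_0}\mathscr{G}_{\be_k}$. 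Since each $\be_k<\al<\be$, the hypothesis $Q(\be_k)$ gives $m_k$ with $\mathscr{G}_{\be_k}\subseteq\mathscr{G}_{\be+m_k}$, and $M=\max_{k<n_0}m_k$ absorbs every exceptional pair into $\mathscr{G}_{\be+M}$, whence $\mathscr{G}_\al\subseteq\mathscr{G}_{\be+M}$. The base case $\al=1$ I would settle directly in the same spirit: for $(F,\si)\in\mathscr{G}_1$ with $\varmin(F,\si)<n_0$ (so $\#F\leq\varmin(F,\si)$ by Remark \ref{rmkdmin}(i)), replace each $\tau\in F$ by a pair $(\{\tau\},\si_\tau)$ with $\si_\tau$ agreeing with $\tau$ to a depth exceeding both $n_0$ and $|\si\wedge\tau|$; then $\varmin(\{\tau\},\si_\tau)\geq n_0$ and $\si\wedge\si_\tau=\si\wedge\tau$, so each singleton lies in $\mathscr{G}_\be$ by Lemma \ref{lemgeqn}, and these singletons reassemble into a skipped branching of $\si$ (the conditions are checked directly, condition (v) being exactly $\#F\leq\varmin(F,\si)$), placing $(F,\si)$ in $\mathscr{G}_{\be+1}$.

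The step most in need of care is the limit case. The danger is that $\mathscr{G}_\al$ is a union of slices of the auxiliary families $\mathscr{G}_{\be_k}$ under a \emph{growing} $\varmin$-threshold, so a naive appeal to the inductive hypothesis yields exponents $m_k$ that could be unbounded in $k$; the uniform $n$ would then fail to exist. Lemma \ref{lemgeqn} is precisely the device that confines the genuinely troublesome pairs to finitely many of the $\be_k$, after which a single finite maximum closes the induction. The only other point demanding vigilance is the transplantation principle underlying the successor step, which rests entirely on the intrinsic description of $\varmin$ and $\varmax$ furnished by Corollary \ref{corminmax}.
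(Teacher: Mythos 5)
Your proposal is correct and takes essentially the same route as the paper: transfinite induction on $\al$ (with $\be$ fixed), the successor step done by re-reading the skipped or attached branching inside the larger family (which the paper leaves implicit and you rightly justify via the intrinsic description of $\varmin$ and $\varmax$ in Corollary \ref{corminmax}), and the limit step done by Lemma \ref{lemgeqn} plus a finite maximum over the exceptional indices $k<n_0$, exactly the paper's $n=\max\{m,n_1,\ldots,n_{m-1}\}$. The only divergence is cosmetic: at $\al=1$ the paper checks $\mathscr{G}_1\subseteq\mathscr{G}_\be$ directly, whereas your singleton-branching argument lands in $\mathscr{G}_{\be+1}$, which is equally sufficient.
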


\begin{proof}
Fix $\be$ a countable ordinal number. We proceed by transfinite induction on $\al$. In the case $\al = 1$, it is easily checked that $\mathscr{G}_1\subset\mathscr{G}_\be$. Assume now that $\al$ is a countable ordinal with $\al<\be$, such that the statement holds for every $\gamma<\al$. If $\al = \gamma + 1$, then by the inductive assumption there exists $n\inn\cup\{0\}$ with $\mathscr{G}_\gamma \subset \mathscr{G}_{\be+n}$. We conclude that $\mathscr{G}_\al \subset \mathscr{G}_{\be + (n+1)}$. If $\al$ is a limit ordinal, fix $\{\al_k\}_k$ a strictly increasing sequence of ordinal numbers, such that $\al = \lim_k\al_k$ and
\begin{equation*}
\mathscr{G}_\al = \bigcup_k\big\{(F,\si)\in\mathscr{G}_{\al_k}:\;\varmin(F,\si)\geqslant k\big\}
\end{equation*}
Lemma \ref{lemgeqn} yields that there exists $m\inn$ with $\{(F,\si)\in\mathscr{G}_\al:\;\varmin(F,\si)\geqslant m\} \subset \mathscr{G}_\be$. The inductive assumption, yields that for $k=1,\ldots,m-1$, there exists $n_k\inn\cup\{0\}$ with $\mathscr{G}_{\al_k}\subset\mathscr{G}_{\be + n_k}$. Setting $n = \max\{m,n_1,\ldots,n_{m-1}\}$, it can be easily checked that $\mathscr{G}_\al\subset\mathscr{G}_{\be + n}$.

\end{proof}

\begin{prp}
Let $\al<\be$ be countable ordinal numbers. Then there exists $n\inn$ such that
\begin{equation*}
\big\{F\in\mathcal{G}_\al:\;\#F\geqslant 2\;\text{and}\;\min\{|\tau_1\wedge\tau_2|: \tau_1,\tau_2\in F, \tau_1\neq\tau_2\}\geqslant n\big\} \subset \mathcal{G}_\be.
\end{equation*}
\label{prpgeqn}
\end{prp}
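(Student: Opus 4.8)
The plan is to combine Lemma~\ref{lemgeqn} with Lemma~\ref{lemmineq}, which together reduce the statement to a one-line verification. First I would fix, using Lemma~\ref{lemgeqn} applied to the pair $\al<\be$, an $n\inn$ with the property that
\[
\{(F,\si)\in\mathscr{G}_\al:\;\varmin(F,\si)\geqslant n\} \subset \mathscr{G}_\be.
\]
I claim that this same $n$ witnesses the proposition.

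Next I would take any $F\in\mathcal{G}_\al$ with $\#F\geqslant 2$ and $\min\{|\tau_1\wedge\tau_2|:\tau_1,\tau_2\in F,\ \tau_1\neq\tau_2\}\geqslant n$. Since $F\neq\varnothing$, Definition~\ref{defggn} provides some $\si\in\cantor$ with $(F,\si)\in\mathscr{G}_\al$. The point is that I cannot feed this particular $\si$ into the inclusion above, because $\varmin(F,\si)$ need not be large; all that Corollary~\ref{corminbound} guarantees is that $\varmin(F,\si)$ is a \emph{lower} bound for the pairwise split levels, which is the wrong inequality. To manufacture a witness with a genuinely large $\varmin$, I would invoke Lemma~\ref{lemmineq} (legitimately, as $\#F\geqslant 2$): it yields $\si^\prime\in\cantor$ with $(F,\si^\prime)\in\mathscr{G}_\al$ and
\[
\varmin(F,\si^\prime) = \min\{|\tau_1\wedge\tau_2|:\tau_1,\tau_2\in F,\ \tau_1\neq\tau_2\}.
\]

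By hypothesis the right-hand side is $\geqslant n$, so $(F,\si^\prime)$ belongs to $\{(G,\rho)\in\mathscr{G}_\al:\varmin(G,\rho)\geqslant n\}$, and the inclusion supplied by Lemma~\ref{lemgeqn} then gives $(F,\si^\prime)\in\mathscr{G}_\be$. Appealing once more to Definition~\ref{defggn}, this says exactly that $F\in\mathcal{G}_\be$, completing the argument.

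The only genuinely nontrivial step is the passage from the arbitrary section $\si$ to the optimal one $\si^\prime$ whose $\varmin$ \emph{equals} the minimal pairwise split level of $F$; this is precisely the content of Lemma~\ref{lemmineq}, and it is what allows the combinatorial quantity appearing in the statement to be matched against the threshold $n$ produced by Lemma~\ref{lemgeqn}. Everything else is routine unwinding of the definition of $\mathcal{G}_\al$.
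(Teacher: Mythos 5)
Your proof is correct and coincides with the paper's own argument: the paper likewise fixes $n$ via Lemma~\ref{lemgeqn} and then uses Lemma~\ref{lemmineq} to replace the arbitrary witness $\si$ by a $\si^\prime$ with $\varmin(F,\si^\prime)$ equal to the minimal pairwise split level, which meets the threshold. Your side remark that Corollary~\ref{corminbound} only bounds $\varmin(F,\si)$ from the wrong direction, so that Lemma~\ref{lemmineq} is genuinely needed, is also an accurate reading of why the paper proves that lemma.
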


\begin{proof}
Let $\al<\be$ be countable ordinal numbers. Choose $n\inn$ such that the conclusion of Lemma \ref{lemgeqn} is satisfied. We show that this $n$ is the desired natural number. Let $F\in\mathcal{G}_\al$ with $\#F\geqslant 2\;\text{and}\;\min\{|\tau_1\wedge\tau_2|: \tau_1,\tau_2\in F, \tau_1\neq\tau_2\}\geqslant n$. Then there exists $\si\in\cantor$ with $(F,\si)\in\mathscr{G}_\al$. Lemma \ref{lemmineq} yields that there exists $\si^\prime\in\cantor$ such that $(F,\si^\prime)\in\mathscr{G}_\al$ and $\varmin(F,\si^\prime)\geqslant n$. By the choice of $n$, we have that $(F,\si^\prime)\in\mathscr{G}_\be$, i.e. $F\in\mathcal{G}_\be$.
\end{proof}

The following proposition is an obvious conclusion of Lemma \ref{lemplusn}

\begin{prp}
Let $\al<\be$ be countable ordinal numbers. Then there exists $n\inn\cup\{0\}$ such that $\mathcal{G}_\al \subset \mathcal{G}_{\be + n}$.\label{prpplusn}
\end{prp}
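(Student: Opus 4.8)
The plan is to derive this proposition directly from Lemma~\ref{lemplusn} by transferring the inclusion $\mathscr{G}_\al \subset \mathscr{G}_{\be+n}$ from the level of the auxiliary families $\mathscr{G}_\gamma$ (which live in $[\cantor]^{<\omega}\times\cantor$) down to the level of the projected families $\mathcal{G}_\gamma$. The key observation is that $\mathcal{G}_\gamma$ is, by Definition~\ref{defggn}, precisely the set of first coordinates $F$ appearing in some pair $(F,\si)\in\mathscr{G}_\gamma$, together with $\varnothing$. So an inclusion of the $\mathscr{G}$-families immediately yields the corresponding inclusion of the $\mathcal{G}$-families.

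First I would invoke Lemma~\ref{lemplusn} to fix $n\inn\cup\{0\}$ with $\mathscr{G}_\al \subset \mathscr{G}_{\be+n}$. Then I would take an arbitrary $F\in\mathcal{G}_\al$. If $F=\varnothing$, it belongs to $\mathcal{G}_{\be+n}$ by definition. Otherwise, by Definition~\ref{defggn} there exists $\si\in\cantor$ with $(F,\si)\in\mathscr{G}_\al$. The inclusion from Lemma~\ref{lemplusn} gives $(F,\si)\in\mathscr{G}_{\be+n}$, and hence, again by Definition~\ref{defggn}, $F\in\mathcal{G}_{\be+n}$. This establishes $\mathcal{G}_\al\subset\mathcal{G}_{\be+n}$, which is exactly the assertion.

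There is essentially no obstacle here: the entire content has already been carried out at the $\mathscr{G}$-level in Lemma~\ref{lemplusn}, and passing to the projection is a one-line unwinding of the definition of $\mathcal{G}_\gamma$. The only point requiring the slightest care is the treatment of the empty set, which is included in every $\mathcal{G}_\gamma$ by fiat and so causes no difficulty. For this reason the statement can be recorded as an immediate corollary, and the proof below simply performs the projection.

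\begin{proof}
By Lemma~\ref{lemplusn}, there exists $n\inn\cup\{0\}$ such that $\mathscr{G}_\al \subset \mathscr{G}_{\be+n}$. We claim that this $n$ witnesses the conclusion. Indeed, let $F\in\mathcal{G}_\al$. If $F = \varnothing$, then $F\in\mathcal{G}_{\be+n}$ by Definition~\ref{defggn}. Otherwise, by Definition~\ref{defggn} there exists $\si\in\cantor$ with $(F,\si)\in\mathscr{G}_\al$. Since $\mathscr{G}_\al\subset\mathscr{G}_{\be+n}$, we have $(F,\si)\in\mathscr{G}_{\be+n}$, and therefore $F\in\mathcal{G}_{\be+n}$. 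We conclude that $\mathcal{G}_\al\subset\mathcal{G}_{\be+n}$.
\end{proof}
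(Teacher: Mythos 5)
Your proof is correct and follows exactly the route the paper intends: the paper states the proposition as an immediate consequence of Lemma~\ref{lemplusn} (offering no further argument), and your write-up simply makes explicit the one-line projection from $\mathscr{G}_\al\subset\mathscr{G}_{\be+n}$ to $\mathcal{G}_\al\subset\mathcal{G}_{\be+n}$ via Definition~\ref{defggn}, including the trivial case $F=\varnothing$. Nothing is missing and nothing differs in substance from the paper's reasoning.
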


The following fact is proven in \cite{LT}, Proposition 7.4.
If $\kappa$ is an infinite cardinal number, then there exists a large, hereditary and compact family of finite subsets of $\kappa$, if and only if $\kappa$ is not $\omega$-Erd\H os. The following question arises naturally.

\begin{qst}
Let $\kappa$ be an infinite cardinal number which is not $\omega$-Erd\H os and $\alpha$ be a countable ordinal number. Does there exist an $\alpha$-large, hereditary and compact family of finite subsets of $\kappa$?
\end{qst}

\section{The space $\X$}

In this section we define the space $\X$ and prove that it is reflexive, has an unconditional Schauder basis of length the continuum and that it admits only $\ell_1$ as a spreading model. In the beginning we define a sequence of non separable spaces $X_n$, $n\inn$. Each one is defined using the family $\mathcal{G}_n$ in a similar manner as the Schreier family $\mathcal{S}_1$ is used to define the space in \cite{Sch}. Then the construction of $\X$ is presented, which combines the spaces $X_n$ and Tsirelson space, using a method first appeared in \cite{DFJP}. In the end the properties of the space $\X$ are deduced by directly using the structure of the families $\mathcal{G}_n$.

Before proceeding to the definition of the spaces $X_n$ and $\X$, let us first recall the notion of $\ell_1^\al$ spreading models.

\begin{dfn}
Let $\{x_k\}_k$ be a sequence in a Banach space and $\al$ be a countable ordinal number. We say that $\{x_k\}_k$ generates an $\ell_1^\al$ spreading model, if there exists a constant $c>0$ such that for every $F\in\mathcal{S}_\al$ and every real numbers $\{\lambda_k\}_{k\in F}$ the following holds:
\begin{equation*}
\|\sum_{k\in F}\lambda_kx_k\| \geqslant c\sum_{k\in F}|\lambda_k|.
\end{equation*}
\end{dfn}

Let us from now on fix a one to one and onto map $\tau \rightarrow \xi_\tau$ from $\cantor$ to the cardinal number $\cont$.

\begin{dfn}
For $n\inn$ define a norm on $c_{00}(\cont)$ in the following manner.
\begin{itemize}

\item[(i)]
For $n\inn$, we may identify an $F\in\mathcal{G}_n$ with a linear functional $F:c_{00}(\cont)\rightarrow\mathbb{R}$ in the following manner. For $x = \sum_{\xi<\cont}\la_\xi e_\xi \in c_{00}(\cont)$
\begin{equation*}
F(x) = \sum_{\tau\in F}\la_{\xi_\tau}
\end{equation*}

\item[(ii)]
For $x\in c_{00}(\cont)$ define
\begin{equation*}
\|x\|_n = \sup\{|F(x)|: F\in\mathcal{G}_n\}
\end{equation*}
Set $X_n$ to be the completion of $(c_{00}(\cont), \|\cdot\|_n)$.
\end{itemize}\label{defxn}

\end{dfn}

\begin{prp}
Let $n\inn$. Then the following hold.
\begin{itemize}

\item[(i)] The space $X_n$ is $c_0$ saturated.

\item[(ii)] The unit vector basis $\{e_\xi\}_{\xi<\cont}$ is a normalized, suppression unconditional and weakly null basis of $X_n$, with the length of the continuum.

\item[(iii)] Any subsequence of the unit vector basis admits only $\ell_1$ as a spreading model.

\end{itemize}\label{prpc0sat}

\end{prp}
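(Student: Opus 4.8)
I would prove the three assertions in the order (ii), (i), (iii), since the basis properties underpin the structural claims. For (ii), the key observation is that the norm $\|\cdot\|_n$ is defined as a supremum over the hereditary family $\mathcal{G}_n$ of functionals that only read off coordinates, so suppression unconditionality is automatic: if $G \subseteq \supp(x)$ then any $F \in \mathcal{G}_n$ evaluated on the projection $P_G x$ equals $F'(x)$ where $F' = F \cap G \in \mathcal{G}_n$ by heredity (Remark \ref{rmkincr}), giving $\|P_G x\|_n \leqslant \|x\|_n$. That each $e_\xi$ is normalized follows since singletons lie in $\mathcal{G}_n$ (a pair $(\{\tau\},\si)$ satisfies Definition \ref{defg1} for any $\si \neq \tau$ with $\si \wedge \tau \neq \varnothing$), so $\|e_\xi\|_n = 1$. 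For weak nullity I would use that $\mathcal{G}_n$ is compact (Theorem \ref{thmcompact}): any $F \in \mathcal{G}_n$ is finite with cardinality bounded by $\varmax$, but more usefully, for a fixed functional the coordinate $\xi_\tau$ appears in only finitely many of its evaluations, so along the basis $\{e_\xi\}$ each fixed $F$ eventually vanishes; combined with the standard Schreier-space argument this yields that $\{e_\xi\}$ is weakly null.

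\emph{The $c_0$-saturation step.} For (i), the heart of the matter is to extract from any infinite-dimensional subspace a sequence equivalent to the $c_0$-basis. I would first reduce, by a standard perturbation and gliding-hump argument, to a normalized block sequence $\{y_k\}_k$ of $\{e_\xi\}$ with respect to some fixed countable set of coordinates. The crucial point is that $\mathcal{G}_n$ restricted to any countable subset $B$ of $\cantor$ has Cantor--Bendixson index \emph{exactly} $\omega^n+1$ (the remark following Proposition \ref{prpnlarge} asserts this finite-index behavior), so the restricted norm behaves like the Schreier norm of order $n$, which is $c_0$-saturated. I would then show that for a suitably chosen block sequence any admissible functional $F \in \mathcal{G}_n$ can cover at most boundedly many blocks with nonnegligible mass, forcing $\|\sum_k a_k y_k\|_n \leqslant C \max_k |a_k|$, while the lower $c_0$-estimate is trivial from unconditionality and normalization.

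\emph{The $\ell_1$-spreading-model step.} For (iii), let $\{e_{\xi_k}\}_k$ be any subsequence; I must show some further subsequence generates an $\ell_1$ spreading model and that no other spreading model can arise. The positive direction uses that $\mathcal{G}_n$ is $n$-large (Proposition \ref{prpnlarge}, with $n \geqslant 1$): by $1$-largeness, after relabeling via the one-to-one map $\phi$ furnished there, every set of the form $\phi(F)$ with $F \in \mathcal{S}_n$ lies in $\mathcal{G}_n$, and in particular all singletons and indeed all initial segments up to length $\min F$ are admissible, so the functional $F = \{\tau_{k_1},\dots,\tau_{k_m}\}$ computes $\sum_j |a_j|$ on $\sum_j a_j e_{\xi_{k_j}}$ whenever $m$ is small relative to the indices; this gives the lower $\ell_1$-estimate $\|\sum a_j e_{\xi_{k_j}}\|_n \geqslant c\sum|a_j|$ on a spread-out subsequence. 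Since the trivial upper estimate $\|\sum a_j e_{\xi_{k_j}}\|_n \leqslant \sum |a_j|$ always holds, the spreading model is equivalent to $\ell_1$; uniqueness is then immediate because the upper $\ell_1$-bound forces every spreading model to dominate and be dominated by the $\ell_1$-basis.

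\emph{The main obstacle.} I expect the genuine difficulty to lie in the $c_0$-saturation of step (i). The $\ell_1$ lower estimate and the unconditionality are formal consequences of largeness and heredity, but producing a $c_0$-sequence requires quantitative control of how the admissible sets in $\mathcal{G}_n$ overlap a given block sequence, and this is exactly where the geometry of the tree $\cantor$ (via Corollary \ref{corminbound}, which bounds $\varmin$ by the splitting level $|\tau_1 \wedge \tau_2|$) must be exploited: passing to blocks whose supports lie deep in disjoint subtrees forces $\varmin(F,\si)$ to be large, hence by the $\varmin \geqslant n$-type admissibility constraints only $c_0$-many blocks can be simultaneously captured. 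Making this counting precise, and reconciling it with the Schreier-index calculation, is the step I would devote the most care to.
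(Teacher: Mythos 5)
Your parts (ii) and (iii) follow essentially the paper's route: suppression unconditionality and normalization come from heredity of $\mathcal{G}_n$ and the presence of singletons, and the $\ell_1$ lower estimates come from largeness via Proposition \ref{prpnlarge} (your claim that $F$ ``computes $\sum_j|a_j|$'' is literally true only for nonnegative coefficients, but the reduction to positive coefficients by unconditionality is routine and is exactly how the paper's Lemma \ref{lembasisell1} proceeds; likewise your weak-nullity sketch needs the isometric embedding into $C(\mathcal{G}_n)$ or Rainwater's theorem to pass from pointwise nullity on the norming set to genuine weak nullity, since the functionals $F$ do not exhaust the dual ball).

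The genuine gap is in (i). The paper proves $c_0$-saturation softly: $\mathcal{G}_n$ is compact and consists of finite sets, hence is scattered; the Pe\l czy\'nski--Semadeni theorem \cite{PZ} then gives that $C(\mathcal{G}_n)$ is $c_0$-saturated; and $x\mapsto (F\mapsto F(x))$ is an isometric embedding of $X_n$ into $C(\mathcal{G}_n)$. You instead attempt a direct counting argument and explicitly leave its core --- that inside every subspace one can select a block sequence on which each $F\in\mathcal{G}_n$ carries only boundedly much mass --- as ``the step I would devote the most care to.'' That step \emph{is} the theorem: no uniform upper estimate $\|\sum_k a_ky_k\|_n\leqslant C\max_k|a_k|$ can hold for arbitrary normalized block sequences, since by your own part (iii) the basis (itself a block sequence) generates $\ell_1^n$ spreading models, so everything rests on a selection mechanism you never supply. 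Moreover, the heuristic you offer for it runs the admissibility inequalities backwards: the constraints in Definitions \ref{defg1}, \ref{defskipped} and \ref{defattached} have the form $d\leqslant\varmin(F,\si)$, so a \emph{large} $\varmin$ \emph{permits} more components, and Corollary \ref{corminbound} bounds $\varmin(F,\si)$ above by the minimal pairwise meet --- hence supports lying ``deep in disjoint subtrees'' make larger sets admissible rather than limiting how many blocks can be captured. Finally, your appeal to the restriction of $\mathcal{G}_n$ to a countable set having Cantor--Bendixson index exactly $\omega^n+1$ leans on a remark the paper states without proof and never uses, and in any case index information alone does not yield $c_0$-saturation without an argument of the $C(K)$ type that the paper actually employs.
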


\begin{proof}
To prove (i), notice that since $\mathcal{G}_n$ is compact and contains only finite sets, it is scattered. The main Theorem from \cite{PZ} yields that $C(\mathcal{G}_n)$ is $c_0$ saturated. Evidently, the map $T:X_n\rightarrow C(\mathcal{G}_n)$ with $Tx(F) = F(x)$ is an isometric embedding and therefore, $X_n$ is $c_0$ saturated.

Property (ii) follows from the fact that $\mathcal{G}_n$ is hereditary and property (iii) is a consequence of the fact that $\mathcal{G}_n$ is 1-large.
\end{proof}

\begin{rmk}
For a cardinal number $\kappa$ and $\mathcal{B}$ a compact, hereditary and large family of finite subsets of $\kappa$, one may define a $c_0$ saturated space $X_\mathcal{B}$, in the same manner as in Definition \ref{defxn}. Then any subsequence of the unit vector basis $\{e_\xi\}_{\xi<\kappa}$ admits only $\ell_1$ as a spreading model. Therefore the problem of finding a basic sequence of length $\kappa$, admitting only $\ell_1$ as a spreading model, is reduced to the existence of such a family $\mathcal{B}$. As it is proven in \cite{LT}, Proposition 7.4, this is equivalent to $\kappa$ not being $\omega$-Erd\H os.

It is also worth noting, that for a given cardinal number $\kappa$, it is easy to construct a reflexive space $X$ with a basis $\{e_\xi\}_{\xi<\kappa}$, having the property that every subspace has a sequence admitting $\ell_1$ as a spreading model. As proven in \cite{D}, \cite{L} and \cite{S}, any space $X$ with an unconditional basis, embeds as a complemented subspace in a space with a symmetric basis $D$. As noted in \cite{AM}, the construction in \cite{D} has the following additional property. Every subspace of $D$ contains a copy of a subspace of $X$. One may therefore embed Tsirelson space $T$ into a space $D$ with a symmetric basis $\{e_n\}_n$, saturated with subspaces of $T$. Since this basis is symmetric, it may naturally be extended to a basis $\{e_\xi\}_{\xi<\kappa}$ to define a space $X$ having the desired property. However, this space also admits spreading models not equivalent to $\ell_1$. For instance, the basis itself being symmetric and not equivalent to $\ell_1$, fails this property.

\end{rmk}

By $T$ we denote Tsirelson space as defined in \cite{FJ} and by $\{e_n\}_n$ we denote its usual basis. We are now ready to define the space $\X$, using the spaces $X_n$, Tsirelson space $T$ and a method first appeared in \cite{DFJP}.

\begin{dfn}
Define the following norm on $c_{00}(\cont)$. For $x\in c_{00}(\cont)$
\begin{equation*}
\|x\| = \big\|\sum_{n=1}^\infty\frac{1}{2^n}\|x\|_ne_n\big\|_T
\end{equation*}
Set $\X$ to be the completion of $(c_{00}(\cont), \|\cdot\|)$.\label{defx}
\end{dfn}

Set $\la = \|\sum_{n=1}^\infty \frac{1}{2^n}e_n\|_T$ and for $\xi<\cont$, $\tilde{e}_\xi = \frac{1}{\la}e_\xi$. Since $\{e_\xi\}_{\xi<\cont}$ is normalized and suppression unconditional in $X_n$, and $\{e_n\}_n$ is 1-unconditional in T, we conclude that $\{\tilde{e}_\xi\}_{\xi<\cont}$ is a normalized suppression unconditional basis of $\X$.

For $n\inn$ define $P_n:\X\rightarrow X_n$ with $P_nx = \frac{1}{2^n}x$. Evidently $P_n$ is well defined and $\|P_n\|\leqslant 1$, for all $n\inn$.\vskip5pt

The main result is the following, which is a combination of Proposition \ref{prpell1smwn} and Corollary \ref{correflexive}, which will be presented in the sequel.

\begin{thm}
The space $\X$ is a non separable reflexive space with a suppression unconditional Schauder basis with the length of the continuum, having the following property. Every normalized weakly null sequence in $\X$ has a subsequence that generates an $\ell_1^n$ spreading model, for every $n\inn$.\label{mainthm}
\end{thm}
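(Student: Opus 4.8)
The plan is to treat the three assertions separately. The statements about the basis are already in hand: it was recorded after Definition \ref{defx} that $\{\tilde{e}_\xi\}_{\xi<\cont}$ is a normalized, suppression unconditional basis of length $\cont$, and non separability is then immediate, since suppression unconditionality forces $\|\tilde{e}_\xi-\tilde{e}_\eta\|\geqslant c>0$ for all $\xi\neq\eta$, so that $\X$ carries an uncountable $c$-separated set. Hence the theorem reduces to two independent points, which I would isolate as the Proposition and Corollary quoted just above the statement: (a) that every normalized weakly null sequence has a subsequence generating $\ell_1^n$ as a spreading model for each $n\inn$, and (b) that $\X$ is reflexive.

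For (b) I would argue as follows. Since $\{\tilde{e}_\xi\}$ is unconditional, James' theorem reduces reflexivity to the assertion that $\X$ contains no isomorphic copy of $c_0$ and none of $\ell_1$. I would obtain this from the stronger statement, announced in the introduction, that every infinite dimensional subspace of $\X$ contains a further subspace isomorphic to a subspace of Tsirelson space $T$. Granting this, each infinite dimensional subspace of $\X$ contains an infinite dimensional reflexive subspace, because $T$ is reflexive and reflexivity passes to subspaces; as no infinite dimensional subspace of $c_0$ or of $\ell_1$ is reflexive, neither space embeds, and $\X$ is reflexive. To produce the copy of a subspace of $T$, I would pass by a gliding hump to a normalized block sequence $\{y_k\}_k$ of the basis and consider its profiles $v_k=\sum_{n}2^{-n}\|y_k\|_n e_n\in T$, which have $\|v_k\|_T=1$. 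The triangle inequality in each $X_n$ and the $1$-unconditionality of the Tsirelson basis give the upper estimate $\|\sum_k a_k y_k\|\leqslant\|\sum_k|a_k|v_k\|_T$; the content lies in producing, after a suitable subsequence, a matching lower estimate identifying $\{y_k\}$ with an equivalent block sequence of $T$. This is exactly where the $c_0$ saturation of the $X_n$ from Proposition \ref{prpc0sat} enters, controlling the contribution of each coordinate $\|\cdot\|_n$ to the mixed norm.

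For (a) I would first invoke Brunel--Sucheston to pass to a subsequence, still denoted $\{x_k\}$, generating a spreading model $(\tilde{x}_i)_i$, and then show that for each fixed $n$ its restriction $(\tilde{x}_i)_{i=1}^n$ is equivalent to the $\ell_1^n$ basis. The upper estimate is trivial, since $\|x_k\|=1$ forces $\|\sum_{i=1}^n a_i\tilde{x}_i\|\leqslant\sum_{i=1}^n|a_i|$. For the lower estimate, fix $n$ and indices $k_1<\cdots<k_n$ far out in the sequence; after a gliding hump the $x_{k_i}$ may be taken to be blocks with pairwise disjoint, late supports in $\{e_\xi\}$. Using that $\mathcal{G}_N$ is $N$-large (Proposition \ref{prpnlarge}, characterised in Proposition \ref{alargechar}) for a suitable $N=N(n)$, I would select for each $i$ a functional in $\mathcal{G}_N$ concentrated on $\supp(x_{k_i})$ extracting a definite proportion of its mass, and then amalgamate these $n$ functionals into a single member of $\mathcal{G}_N$ --- legitimate precisely because the union of $n$ admissibly placed elements of such a family remains in the family (this is the attached branching mechanism of Definition \ref{defattached}) --- to obtain $\|\sum_i a_i x_{k_i}\|_N\geqslant c\sum_i|a_i|$; reading off the $e_N$ coordinate of the outer Tsirelson norm then yields $\|\sum_i a_i x_{k_i}\|\geqslant 2^{-N}c\sum_i|a_i|$. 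Passing to the spreading model gives the $\ell_1^n$ lower bound, with a constant that is allowed to depend on $n$.

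The main obstacle in both parts is the lower estimate, and the genuinely delicate case is when the profiles $v_k$ are spread over arbitrarily many coordinates (the honestly Tsirelson-like behaviour), so that no single level $X_N$ sees a fixed proportion of the mass of the $x_k$. Handling this requires playing the $\al$-largeness of the $\mathcal{G}_n$ against the damping factors $2^{-n}$ and against the $\ell_1$ spreading model property of $T$ itself applied to the profiles; quantifying this interaction --- and in particular tracking how far out the blocks must be placed as $n$ grows --- is where the combinatorial results of Section 2 (the branching structure of Lemma \ref{lemminmaxbetween} and its corollaries, together with the monotonicity $\|\cdot\|_1\leqslant\|\cdot\|_2\leqslant\cdots$ coming from Remark \ref{rmkincr}) do the real work.
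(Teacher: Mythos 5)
Your proposal for part (a) rests on a misreading of the statement. In this paper, ``generates an $\ell_1^n$ spreading model'' does not mean that the first $n$ vectors of the Brunel--Sucheston spreading model are equivalent to the basis of the $n$-dimensional $\ell_1^n$ with a constant depending on $n$; that property is vacuous, since any normalized basic sequence satisfies $\|\sum_{i=1}^n a_i\tilde{x}_i\|\geqslant K^{-1}\max_i|a_i|\geqslant (Kn)^{-1}\sum_i|a_i|$. What is actually proved (see the proofs of Lemma \ref{lembasisell1} and Lemmas \ref{lemelloneskipped}, \ref{lemelloneattached}) is a lower $\ell_1$-estimate with a single constant $c_n$ uniform over \emph{all} sets $F\in\mathcal{S}_n$, whose cardinalities are unbounded; your plan of fixing $n$ indices $k_1<\cdots<k_n$ and letting the constant degrade with $n$ proves nothing of this kind. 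Moreover your key step fails: there is no $N=N(n)$ such that a functional from $\mathcal{G}_N$ extracts ``a definite proportion of the mass'' of an arbitrary normalized vector of $\X$, because $\|x\|_N/\|x\|$ can be arbitrarily small --- this is exactly the regime where $\lim_k\|P_nx_k\|_n=0$ for every $n$, which the paper isolates in Proposition \ref{pnzero} and handles by a sliding hump identifying the sequence with a block sequence of $T$, so that the $\ell_1^n$ estimates there come from Tsirelson space itself and not from the families. Incidentally, you misidentify the hard case: spread-out profiles are the tractable Tsirelson case; the delicate case is when some fixed level $n_0$ retains mass while $\|x_k\|_\infty\to 0$ (Proposition \ref{prppnnotzero}).

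The amalgamation step is also unjustified as stated. A union of $n$ disjointly supported members of $\mathcal{G}_N$ does \emph{not} ``remain in the family'': unions climb the hierarchy (for $G\in\mathcal{S}_n$ one lands in $\mathcal{G}_{N+n}$, as in property (a) in the proof of Lemma \ref{lemelloneskipped}), and admissibility is not disjointness of supports but a delicate tree condition --- a common or converging $\si$, the interlacing $\varmax(F_i,\si)<\varmin(F_{i+1},\si)$ for attached branchings, or the chain $\si\wedge\si_1\cneq\cdots\cneq\si\wedge\si_d$ with $|\si\wedge\si_i|<\varmin(F_i,\si_i)$ for skipped ones. Arranging the extracted functionals $(F_k,\si_k)$ to be admissibly placed is the actual content of Proposition \ref{prppnnotzero}: one passes to $\si=\lim_k\si_k$ and splits each $F_k$ using Lemmas \ref{littlelemma}, \ref{lemsiprime} and \ref{lemnminusone} (together with the minimality of $n_0$, via Lemma \ref{lempnzero}) into a part forming an attached configuration (Lemma \ref{lemFkattached}) or a part forming a skipped one (Lemma \ref{lemelloneskipped}); none of this mechanism appears in your sketch. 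Finally, your parts (a) and (b) are not independent: the paper's $T$-saturation (Proposition \ref{prpTsat}) uses (a) to exclude $c_0$ and thereby make the maps $P_n$ strictly singular, which is what lets one find, inside an arbitrary subspace, a normalized sequence with $\|P_nx_k\|_n\to 0$ for all $n$ --- the hypothesis your reflexivity sketch silently assumes when passing to the profiles $v_k$. The skeleton of your reflexivity argument (James plus saturation by subspaces of $T$) does match the paper, but part (a) as proposed would fail.
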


\begin{lem}
Let $\{\tilde{e}_{\xi_k}\}_k$ be a subsequence of the basis $\{\tilde{e}_\xi\}_{\xi<2^{\aleph_0}}$ of $\X$. Then it has a subsequence that generates an $\ell_1^n$ spreading model for every $n\inn$.\label{lembasisell1}
\end{lem}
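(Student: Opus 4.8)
The plan is to pass to a subsequence of $\{\tilde{e}_{\xi_k}\}_k$ whose corresponding points in $\cantor$ are arranged so that the $1$-large family $\mathcal{G}_1$ alone forces an $\ell_1$ estimate through a single coordinate of the Tsirelson norm. First I would translate the given basis subsequence into a sequence of distinct points $\{\tau_k\}_k$ of $\cantor$, via the fixed bijection $\tau\mapsto\xi_\tau$. By compactness of $\cantor$ one extracts an order preserving subsequence converging to some $\si\in\cantor$ with $\tau_k\neq\si$ and $|\si\wedge\tau_k|$ strictly increasing; this is exactly the configuration used in the proof of Proposition \ref{prpnlarge}. Applying that proposition for $\al=1$, the order preserving enumeration $\phi(k)=\tau_k$ satisfies $\{\tau_k:\,k\in F\}\in\mathcal{G}_1$ for every $F\in\mathcal{S}_1$. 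In particular, whenever $n\leqslant k_1<\cdots<k_n$ we have $\{k_1,\dots,k_n\}\in\mathcal{S}_1$, so the set $G=\{\tau_{k_1},\dots,\tau_{k_n}\}$ belongs to $\mathcal{G}_1$.

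The key estimate comes next. Fix scalars $a_1,\dots,a_n$ and put $x=\sum_{i=1}^n a_i e_{\xi_{\tau_{k_i}}}\in c_{00}(\cont)$. Splitting $\{1,\dots,n\}$ according to the sign of $a_i$ and using that $\mathcal{G}_1$ is hereditary (Remark \ref{rmkincr}), both sign parts of $G$ again lie in $\mathcal{G}_1$ and, viewed as functionals on $X_1$, evaluate to $\sum_{a_i\geqslant 0}|a_i|$ and $\sum_{a_i<0}|a_i|$ respectively; hence $\|x\|_1\geqslant\frac{1}{2}\sum_{i=1}^n|a_i|$. Since $\|P_1\|\leqslant 1$ (as noted after Definition \ref{defx}), we get $\frac{1}{2}\|x\|_1\leqslant\|x\|$, and therefore $\|x\|\geqslant\frac{1}{4}\sum_{i=1}^n|a_i|$. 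Recalling $\tilde{e}_\xi=\frac{1}{\la}e_\xi$, this reads $\big\|\sum_{i=1}^n a_i\tilde{e}_{\xi_{\tau_{k_i}}}\big\|\geqslant\frac{1}{4\la}\sum_{i=1}^n|a_i|$, while the triangle inequality gives the matching upper bound $\sum_{i=1}^n|a_i|$. Finally I would pass to a further subsequence that generates a spreading model, as every bounded sequence does; since the lower bound is valid for all $k_1\geqslant n$, it survives in the spreading model limit, so for every $n$ the first $n$ spreading model vectors are equivalent to the $\ell_1^n$ basis with the uniform constant $1/(4\la)$.

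The point to get right, rather than a genuine obstacle, is the recognition that one need not let the index $m$ of the component norm $\|\cdot\|_m$ grow with $n$: the single fixed coordinate $m=1$ already produces a full $\ell_1$ estimate, because $\mathcal{G}_1$ contains the Schreier admissible configurations supplied by Proposition \ref{prpnlarge} and its constant does not deteriorate as $n$ increases. The only genuine care required is the bookkeeping of the subsequence extraction, namely arranging $\phi$ to be order preserving so that the resulting sequence is an honest subsequence of the one given, and checking that the $\mathcal{G}_1$-membership used for the estimate holds for all sufficiently spread out tuples.
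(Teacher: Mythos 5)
Your argument is correct as far as it goes, but it proves a strictly weaker statement than the lemma. You have read ``generates an $\ell_1^n$ spreading model for every $n\inn$'' as ``the spreading model is $\ell_1$, hence its first $n$ vectors are uniformly equivalent to the $n$-dimensional $\ell_1$ basis.'' In this paper (following \cite{AKT} and consistently with the use of $T_\al$ in Section 4), $\ell_1^n$ refers to the \emph{$n$-th order} spreading model: what must be shown, and what the paper's own proof shows, is that there are constants $c_n>0$ such that for every $n\inn$, every $F\in\mathcal{S}_n$ contained in the index set and all positive scalars $\{t_j\}_{j\in F}$ one has $\|\sum_{j\in F}t_j\tilde{e}_{\xi_{\phi(j)}}\|\geqslant c_n\sum_{j\in F}t_j$. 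Your estimate via $\mathcal{G}_1$ and the first coordinate only covers $F\in\mathcal{S}_1$, i.e.\ tuples $n\leqslant k_1<\cdots<k_n$, and $\mathcal{S}_1$-lower estimates do not upgrade to $\mathcal{S}_n$-lower estimates: the unit vector basis of the Schreier space satisfies $\mathcal{S}_1$-estimates with constant $1$ yet admits no $\ell_1^2$ spreading model, as second order Schreier averages of it are small in norm. Moreover, your closing claim that the constant can be taken uniform in $n$ is not merely unproved but false in $\X$: every finite set $F$ with $\min F\geqslant 2$ belongs to $\mathcal{S}_n$ for some $n$, so uniform $\mathcal{S}_n$-estimates for all $n$ would make the subsequence equivalent to the usual $\ell_1$ basis, contradicting reflexivity (Corollary \ref{correflexive}). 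The constants must degrade with $n$, as the paper's $c_n\sim 2^{-n}$ do.

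The specific step that fails is the assertion that ``the single fixed coordinate $m=1$ already produces a full $\ell_1$ estimate.'' The family $\mathcal{G}_1$ cannot absorb images of $\mathcal{S}_n$ for $n\geqslant 2$: if $(G,\si)\in\mathscr{G}_1$ and $\tau_1\neq\tau_2$ belong to $G$, then $\#G\leqslant\varmin(G,\si)\leqslant|\tau_1\wedge\tau_2|$ by Remark \ref{rmkdmin} and Corollary \ref{corminbound}; but $\mathcal{S}_2$ contains sets with their two least elements fixed and arbitrarily large cardinality, so $\phi(F)\in\mathcal{G}_1$ is impossible for all such $F$, whatever $\phi$ is. This is exactly why the paper invokes Proposition \ref{prpnlarge} in its full strength -- a \emph{single} map $\phi$ with $\phi(F)\in\mathcal{G}_n$ for every $F\in\mathcal{S}_n$ and every $n$ simultaneously -- and then, for $F\in\mathcal{S}_n$, routes the estimate through the $n$-th coordinate of the Tsirelson sum: $\|x\|\geqslant\frac{1}{2^n}\|x\|_n\geqslant\frac{1}{2^n}|\phi(F)(x)|$, yielding the $\mathcal{S}_n$-estimate with a constant of order $2^{-n}$. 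Your subsequence bookkeeping (extracting $\{\tau_k\}_k$ converging to $\si$ with $|\si\wedge\tau_k|$ strictly increasing, and passing to $L$ so that $j\mapsto\xi_{\phi(j)}$ is increasing) matches the paper and is fine; the repair is to keep that setup but replace the single-coordinate estimate by the $n$-dependent one just described.
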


\begin{proof}
Set $B = \{\tau: \xi_\tau = \xi_k$ for some $k\inn\}$. By Proposition \ref{prpnlarge} there exists a one to one map $\phi:\mathbb{N}\rightarrow B$ such that $\phi(F)\in\mathcal{G}_n$ for every $F\in\mathcal{S}_n$ and $n\inn$.

Pass to $L$ an infinite subset of the natural numbers such that the map $\tilde{\phi}:L\rightarrow 2^{\aleph_0}$ with $\tilde{\phi}(j) = \xi_{\phi(j)}$ is strictly increasing. We will show that $\{\tilde{e}_{\xi_{\phi(j)}}\}_{j\in L}$ admits an $\ell_1^n$ spreading model for every $n\inn$.

By unconditionality, it is enough to show that there are positive constants $c_n$ such that for every $n\inn$, $F\in\mathcal{S}_n$, $F\subset L$ and $\{t_j\}_{j\in F}$ positive real numbers, we have that
\begin{equation*}
\|\sum_{j\in F}t_j\tilde{e}_{\xi_{\phi(j)}}\| \geqslant c_n\sum_{j\in F}t_j
\end{equation*}
By definition, we have that $\|\sum_{j\in F}t_j\tilde{e}_{\xi_{\phi(j)}}\| \geqslant \frac{\la}{2^n}\|\sum_{j\in F}t_j e_{\xi_{\phi(j)}}\|_n$ and by the choice of $\phi$, we have that $\phi(F)\in\mathcal{G}_n$. Hence, $\phi(F)(\sum_{j\in F}t_j e_{\xi_{\phi(j)}}) = \sum_{j\in F}t_j$ which yields that $\|\sum_{j\in F}t_j e_{\xi_{\phi(j)}}\|_n = \sum_{j\in F}t_j$.

We finally conclude that $\|\sum_{j\in F}t_j\tilde{e}_{\xi_{\phi(j)}}\| \geqslant \frac{\la}{2^n}\sum_{j\in F}t_j$
\end{proof}

\begin{prp}
Let $\{x_k\}_k$ be a normalized, disjointly supported block sequence of $\{\tilde{e}_\xi\}_{\xi<\cont}$, such that $\lim\sup_k\|x_k\|_\infty > 0$. Then $\{x_k\}_k$ has a subsequence that generates an $\ell_1^n$ spreading model for every $n\inn$.\label{supnorm}
\end{prp}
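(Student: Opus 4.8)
The plan is to reduce matters to a single dominant coordinate in each block and then to exploit the fact that the family $\mathcal{G}_n$ regards such coordinates as an $\ell_1$-like set. Since $\limsup_k\|x_k\|_\infty>0$, I would first pass to a subsequence along which $\|x_k\|_\infty\geq\delta$ for some fixed $\delta>0$. For each $k$ I then choose a coordinate $\xi_{\tau_k}\in\supp(x_k)$, with $\tau_k\in\cantor$, whose $e_{\xi_{\tau_k}}$-coefficient $c_k$ satisfies $|c_k|\geq\delta$ (after absorbing the fixed constant relating the bases $\{e_\xi\}$ and $\{\tilde e_\xi\}$ into $\delta$). As the blocks are disjointly supported, the points $\tau_k$ are pairwise distinct.

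Next I would arrange the $\tau_k$ into the configuration needed to invoke the $n$-largeness of $\mathcal{G}_n$. By compactness of $\cantor$ I pass to a further subsequence with $\tau_k\to\sigma$ for some $\sigma\in\cantor$, $\tau_k\neq\sigma$; passing once more to a subsequence along which $|\sigma\wedge\tau_k|$ is strictly increasing yields $\sigma\wedge\tau_k\cneq\sigma\wedge\tau_{k+1}$ for all $k$. This is precisely the configuration built in the proof of Proposition \ref{prpnlarge}, so the same transfinite induction applied to the map $\phi(k)=\tau_k$ gives $\{\tau_j:j\in F\}\in\mathcal{G}_n$ for every $n\inn$ and every $F\in\mathcal{S}_n$. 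Finally I pass to a subsequence generating a spreading model (Brunel--Sucheston); all the preceding properties persist.

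The core of the argument is an $\ell_1^n$ lower bound over $\mathcal{S}_n$-admissible index sets. Fix $n$, $F\in\mathcal{S}_n$ and scalars $a_j\geq 0$. Since the Tsirelson basis is normalized and $1$-unconditional, its $n$-th coordinate functional has norm one, whence $\|y\|\geq\frac{1}{2^n}\|y\|_n$ for every $y\in\X$; applying this to $y=\sum_{j\in F}a_jx_j$ reduces the problem to bounding $\|\sum_{j\in F}a_jx_j\|_n$ below. For this I use the functional $\phi(F)=\{\tau_j:j\in F\}\in\mathcal{G}_n$: because the blocks are disjointly supported and $\tau_j\in\supp(x_j)$, we get $\phi(F)\big(\sum_{j\in F}a_jx_j\big)=\sum_{j\in F}a_jc_j$. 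To neutralize the signs of the $c_j$, split $F=F^+\cup F^-$ according to $\sgn(c_j)$; since $\mathcal{G}_n$ is hereditary (Theorem \ref{thmcompact}), both $\phi(F^+)$ and $\phi(F^-)$ lie in $\mathcal{G}_n$, and evaluating each gives $\sum_{j\in F^\pm}a_j|c_j|\geq\delta\sum_{j\in F^\pm}a_j$. Taking the larger of the two yields $\|\sum_{j\in F}a_jx_j\|_n\geq\frac{\delta}{2}\sum_{j\in F}a_j$, and therefore $\|\sum_{j\in F}a_jx_j\|\geq\frac{\delta}{2^{n+1}}\sum_{j\in F}a_j$.

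To finish, I note that for each fixed $n$ every set of $n$ indices with sufficiently large minimum belongs to $\mathcal{S}_n$ (the Schreier families are spreading and $\mathcal{S}_1\subseteq\mathcal{S}_n$), so the displayed estimate applies to the far-out terms producing the spreading model: for $a_1,\dots,a_n\geq 0$ one obtains $\|\sum_{i=1}^n a_i\tilde x_i\|\geq\frac{\delta}{2^{n+1}}\sum_{i=1}^n a_i$. Combined with the trivial bound $\|\sum_i a_i\tilde x_i\|\leq\sum_i|a_i|$ and the suppression unconditionality of the basis (which passes the lower estimate to arbitrary signs), this shows the spreading model is equivalent to the $\ell_1^n$ basis for every $n\inn$. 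The hard part is the middle step, namely converting the information ``one coordinate of mass $\geq\delta$ per block'' into a genuine $\ell_1$ estimate through the two nested norms; it is resolved exactly because $\mathcal{G}_n$ is $n$-large, so once the dominant coordinates $\tau_k$ are realigned as a branch converging to $\sigma$, their $\mathcal{S}_n$-indexed unions become admissible functionals, and the single-coordinate domination of the Tsirelson norm transmits their lower bound unharmed.
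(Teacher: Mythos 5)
Your proof is correct and is essentially the paper's own argument: the paper likewise extracts a dominant coordinate from each block, uses unconditionality to dominate a subsequence of the basis $\{\tilde{e}_\xi\}_{\xi<\cont}$, and then invokes Lemma \ref{lembasisell1}. The only difference is that you inline the content of Lemma \ref{lembasisell1} and Proposition \ref{prpnlarge} --- realigning the points $\tau_k$ along a branch converging to $\sigma$, evaluating the resulting $\mathcal{G}_n$-functionals, and using the coordinate-$n$ lower bound $\|y\|\geqslant 2^{-n}\|y\|_n$ --- rather than citing them.
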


\begin{proof}
By unconditionality, it is quite clear, that by passing, if necessary, to a subsequence of $\{x_k\}_k$, there exist $\e>0$ and $\{\tilde{e}_{\xi_k}\}_k$ a subsequence of $\{\tilde{e}_\xi\}_{\xi<\cont}$, such that for any $\la_1,\ldots,\la_m$ real numbers, one has that
\begin{equation*}
\|\sum_{k=1}^m\la_kx_k\| > \e\|\sum_{k=1}^m\la_k\tilde{e}_{\xi_k}\|
\end{equation*}
Lemma \ref{lembasisell1} yields the desired result.
\end{proof}

\begin{prp}
Let $\{x_k\}_k$ be a normalized block sequence in $\X$, such that $\lim_k\|P_nx_k\|_n = 0$, for all $n\inn$. Then $\{x_k\}_k$ has a subsequence equivalent to a block sequence in $T$. In particular, $\{x_k\}_k$ has a subsequence that generates an $\ell_1^n$ spreading model for every $n\inn$.\label{pnzero}
\end{prp}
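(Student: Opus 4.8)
The plan is to work through the nonlinear map $\Phi\colon c_{00}(\cont)\to T$ defined by $\Phi(x)=\sum_{n=1}^\infty\frac{1}{2^n}\|x\|_ne_n$, through which $\|x\|=\|\Phi(x)\|_T$ by Definition \ref{defx}. Each $x_k$ is finitely supported, so $\|x_k\|_n\leqslant\|x_k\|_{\ell_1}$ for every $n$; hence the series defining $\Phi(x_k)$ converges in $T$ and $\|\Phi(x_k)\|_T=1$. Since $\|P_nx_k\|_n=\frac{1}{2^n}\|x_k\|_n$, the hypothesis $\lim_k\|P_nx_k\|_n=0$ says exactly that the $n$-th coordinate of $\Phi(x_k)$ tends to $0$ as $k\to\infty$, for every fixed $n$; equivalently $\lim_k\|Q_N\Phi(x_k)\|_T=0$ for every $N$, where $Q_N$ is the basis projection of $T$ onto the first $N$ coordinates.

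First I would run a gliding hump argument on the normalized sequence $\{\Phi(x_k)\}_k$ in $T$. Fix $\e_j>0$ with $\sum_j\e_j\leqslant 1/2$. Using the vanishing of the initial projections to discard the front of each vector, and the norm-convergence of the expansion of each individual $\Phi(x_k)$ to discard its tail, I recursively select $k_1<k_2<\cdots$ and $0=N_0<N_1<\cdots$ so that, writing $y_j=\Phi(x_{k_j})$ and $u_j=(Q_{N_j}-Q_{N_{j-1}})y_j$, one has $\|y_j-u_j\|_T<\e_j$. The $u_j$ are supported in the successive intervals $(N_{j-1},N_j]$, so they form a block sequence in $T$ with $1/2\leqslant\|u_j\|_T\leqslant 3/2$.

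The claim is that $\{x_{k_j}\}_j$ is $2$-equivalent to $\{u_j\}_j$. For the upper estimate, the triangle inequality for each $\|\cdot\|_n$ gives, coordinatewise in $T$, that $\Phi(\sum_ja_jx_{k_j})$ is dominated by $\sum_j|a_j|\,y_j$; monotonicity of the $1$-unconditional basis of $T$ then yields $\|\sum_ja_jx_{k_j}\|\leqslant\|\sum_j|a_j|y_j\|_T$, and replacing each $y_j$ by $u_j$ costs at most $\sup_j|a_j|\sum_j\e_j$, which is $\leqslant\|\sum_ja_ju_j\|_T$ by $\|u_j\|_T\geqslant 1/2$ and unconditionality; so $\|\sum_ja_jx_{k_j}\|\leqslant 2\|\sum_ja_ju_j\|_T$. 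For the lower estimate, since the $x_{k_j}$ are disjointly supported and $\{e_\xi\}$ is suppression unconditional in each $X_n$ (Proposition \ref{prpc0sat}), a functional $F\in\mathcal{G}_n$ supported inside $\supp x_{k_j}$ shows $\|\sum_j a_jx_{k_j}\|_n\geqslant|a_j|\,\|x_{k_j}\|_n$ for each $j$; hence for $n\in(N_{j-1},N_j]$ the $n$-th coordinate of $\Phi(\sum_j a_jx_{k_j})$ is at least $|a_j|(y_j)_n=|a_j|(u_j)_n$, which is precisely the $n$-th coordinate of $\sum_j|a_j|u_j$. Monotonicity of the $T$-norm then gives $\|\sum_j a_jx_{k_j}\|\geqslant\|\sum_j a_ju_j\|_T$, completing the $2$-equivalence.

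The step demanding the most care is the lower estimate: reducing the genuinely nonlinear quantity $\|\sum_j a_jx_{k_j}\|_n$ to a single summand via suppression unconditionality, and then using the disjointness of the blocks $u_j$ to turn the resulting maximum over $j$ into the coordinates of an honest block vector. It is worth stressing that the gliding hump is legitimate \emph{despite} the absence of any uniform control on the tails of the $\Phi(x_k)$: the translated hypothesis kills the fronts uniformly, while each $\Phi(x_k)$ has its own norm-convergent $T$-expansion, so its tail can be discarded one vector at a time. Finally, for the last assertion, the block sequence $\{u_j\}$ is weakly null in the reflexive space $T$, so by the defining property of Tsirelson space (see the Introduction and \cite{FJ}) it has a subsequence generating $\ell_1$ as a spreading model; transporting this through the $2$-equivalence just established, the corresponding subsequence of $\{x_k\}$ generates an $\ell_1^n$ spreading model for every $n\inn$.
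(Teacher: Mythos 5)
Your gliding-hump construction and the $2$-equivalence of $\{x_{k_j}\}_j$ with the block sequence $\{u_j\}_j$ of $T$ are correct, and this is precisely the argument the paper compresses into a single sentence (``using a sliding hump argument\ldots\ $\{x_k\}_k$ is equivalent to $\{y_k\}_k$''). You verify both estimates carefully and correctly: the hypothesis indeed says exactly that the initial projections $Q_N\Phi(x_k)$ vanish uniformly in $k$ for each fixed $N$, while each $\Phi(x_k)$ has an absolutely convergent expansion since $\|x_k\|_n\leqslant\|x_k\|_{\ell_1}$; and the lower estimate is legitimate because restricting a norming functional $F\in\mathcal{G}_n$ to $\supp x_{k_j}$ leaves it in $\mathcal{G}_n$ (heredity), the blocks are disjointly supported, and coordinatewise domination between vectors with non-negative coordinates passes to the $T$-norm by $1$-unconditionality. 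So the first assertion of the proposition is fully proved, in the same way the paper intends.

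The final step, however, is a genuine non sequitur as written. From the defining property of $T$ you obtain a subsequence of $\{u_j\}_j$ generating $\ell_1$ as a \emph{first order} spreading model, and that is strictly weaker than generating an $\ell_1^n$ spreading model for every $n\inn$: the unit vector basis of the Schreier space generates an $\ell_1$ spreading model yet fails uniform lower $\mathcal{S}_2$-estimates, so no equivalence argument can upgrade a first-order statement by itself. What you need -- and what the paper tacitly invokes -- is the stronger standard property of Tsirelson space: every seminormalized block sequence $\{u_j\}_j$ in $T$ satisfies, for every $n\inn$ and every $F\in\mathcal{S}_n$, the estimate $\|\sum_{j\in F}a_ju_j\|_T\geqslant 2^{-n}\sum_{j\in F}|a_j|\,\|u_j\|_T$. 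This follows by iterating the implicit formula of the Tsirelson norm $n$ times, taking $E_j$ to be the smallest interval containing $\supp u_j$ and using that $\min\supp u_j\geqslant j$, so that $\{\min\supp u_j:\,j\in F\}$ is a spreading of $F$ and hence belongs to $\mathcal{S}_n$; no passage to a subsequence and no weak nullity are needed. Transporting these $\mathcal{S}_n$-estimates through your $2$-equivalence (reducing to non-negative coefficients by unconditionality, as in the paper's Lemma \ref{lembasisell1}) gives the $\ell_1^n$ spreading models for all $n\inn$; with this replacement your proof is complete.
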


\begin{proof}
Using a sliding hump argument, it is easy to see, that passing, if necessary, to a subsequence of $\{x_k\}_k$, there exist $\{I_k\}_k$ increasing intervals of the natural numbers, such that if we set $y_k = \sum_{n\in I_k}\frac{1}{2^n}\|x_k\|_ne_n$, then $\{x_k\}_k$ is equivalent to $\{y_k\}_k$.
\end{proof}

\begin{lem}
Let $\{x_k\}_k$ be a normalized, disjointly supported block sequence of $\{\tilde{e}_\xi\}_{\xi<\cont}$, such that the following holds. There exist $c>0, n_0\inn$, $(F_k,\si_k)\in \mathscr{G}_{n_0}$ for $k\inn$ and $\si\in\cantor$ satisfying the following.

\begin{itemize}

\item[(i)] $|F_k(x_k)| > c$ for all $k\inn$.

\item[(ii)] The $F_k$ are pairwise disjoint.

\item[(iii)] $\si\neq\si_k$ for all $k\inn$.

\item[(iv)] $\si\wedge\si_k \cneq \si\wedge\si_{k+1}$ for all $k\inn$.

\item[(v)] $|\si\wedge\si_k| < \varmin(F_k,\si_k)$ for all $k\inn$.

\end{itemize}

Then $\{x_k\}_k$ generates an $\ell_1^n$ spreading model for every $n\inn$.\label{lemelloneskipped}
\end{lem}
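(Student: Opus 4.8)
The plan is to follow the reduction used in the proof of Lemma \ref{lembasisell1}: by the suppression unconditionality of $\{\tilde{e}_\xi\}_{\xi<\cont}$ it suffices to produce, for each $n\inn$, a constant $c_n>0$ so that
\[
\Big\|\sum_{k\in F}t_kx_k\Big\| \geq c_n\sum_{k\in F}t_k
\]
for every $F\in\mathcal{S}_n$ contained in a fixed tail of the indices and every choice of positive scalars $\{t_k\}_{k\in F}$. First I would make two harmless reductions. Since (iv) forces $|\si\wedge\si_k|$ to be strictly increasing, after discarding at most the first term (the only one that could satisfy $\si\wedge\si_k=\varnothing$) and reindexing I may assume $\si\wedge\si_1\neq\varnothing$ and $|\si\wedge\si_k|\geq k$ for all $k$. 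For each $k$ I set $F_k' = \{\tau\in F_k:\ \xi_\tau\in\supp(x_k)\}$; then $F_k'$ is a nonempty subset of $F_k$, so $(F_k',\si_k)\in\mathscr{G}_{n_0}$ by Proposition \ref{prphereditary}, Corollary \ref{corminmax} gives $\varmin(F_k',\si_k)\geq\varmin(F_k,\si_k)>|\si\wedge\si_k|$, and $F_k'(x_k)=F_k(x_k)$ since the coordinates of $F_k$ outside $\supp(x_k)$ contribute nothing.

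The heart of the argument is the following claim, which I would prove by induction on $n$: for every $F\in\mathcal{S}_n$ in the tail, the pair $\big(\bigcup_{k\in F}F_k',\si\big)$ belongs to $\mathscr{G}_{n_0+n}$, with $\varmin = |\si\wedge\si_{\min F}|$ and $\varmax = |\si\wedge\si_{\max F}|$. For the base case $n=1$, write $F=\{k_1<\cdots<k_d\}$ with $d\leq\min F$; I would check that $(F_{k_i}',\si_{k_i})_{i=1}^d$ is a skipped branching of $\si$ in $\mathscr{G}_{n_0}$ in the sense of Definition \ref{defskipped}. Conditions (i)--(iv) there are exactly hypotheses (ii)--(v) restricted to the subsequence (using $\varmin(F_{k_i}',\si_{k_i})>|\si\wedge\si_{k_i}|$ for (iv)), while condition (v) holds because $d\leq\min F\leq|\si\wedge\si_{\min F}|=|\si\wedge\si_{k_1}|$. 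Hence the union is skipped and Definition \ref{defgn}(ii) assigns it the stated $\varmin$ and $\varmax$. For the inductive step I would decompose $F=\bigcup_{i=1}^d G_i$ with $G_i\in\mathcal{S}_{n-1}$, $G_1<\cdots<G_d$ and $d\leq\min G_1=\min F$; applying the inductive hypothesis to each $G_i$ gives $H_i := \big(\bigcup_{k\in G_i}F_k',\si\big)\in\mathscr{G}_{n_0+n-1}$ with $\varmin(H_i,\si)=|\si\wedge\si_{\min G_i}|$ and $\varmax(H_i,\si)=|\si\wedge\si_{\max G_i}|$. Since all the $H_i$ now share the single root $\si$, I would verify that $(H_i,\si)_{i=1}^d$ is an attached branching of $\si$ in the sense of Definition \ref{defattached}: the strict monotonicity of $k\mapsto|\si\wedge\si_k|$ together with $\max G_i<\min G_{i+1}$ yields $\varmax(H_i,\si)<\varmin(H_{i+1},\si)$, and $d\leq\min F\leq|\si\wedge\si_{\min F}|=\varmin(H_1,\si)$. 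Thus $\big(\bigcup_{k\in F}F_k',\si\big)$ is attached and Definition \ref{defgn}(iii) gives the asserted $\varmin$ and $\varmax$, completing the induction.

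Granting the claim, I would fix $n$ and $F\in\mathcal{S}_n$ in the tail, so that $\bigcup_{k\in F}F_k'\in\mathcal{G}_{n_0+n}$ by Definition \ref{defggn}. Because the $F_k'$ are pairwise disjoint, each is supported in $\supp(x_k)$, and the $x_k$ are disjointly supported, all cross terms vanish and $\big(\bigcup_{k\in F}F_k'\big)\big(\sum_{j}t_jx_j\big)=\sum_{k\in F}t_kF_k(x_k)$; choosing signs as permitted by unconditionality so that each summand equals $t_k|F_k(x_k)|>c\,t_k$, and using $\|x\|\geq 2^{-(n_0+n)}\|x\|_{n_0+n}$ (valid since $\{e_n\}_n$ is $1$-unconditional in $T$ and $\mathcal{G}_{n_0+n}$ realizes the $\|\cdot\|_{n_0+n}$ norm), I would obtain the desired estimate with $c_n=c/2^{n_0+n}$. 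The main obstacle is getting the hierarchy of branchings right: the first amalgamation must be read as a \emph{skipped} branching off the distinct roots $\si_k$, whereas every subsequent amalgamation is an \emph{attached} branching off the single common root $\si$, and one must propagate $\varmin$ and $\varmax$ correctly through both Definition \ref{defgn}(ii) and (iii) so that the admissibility inequalities $d\leq\varmin(H_1,\si)$ and $\varmax(H_i,\si)<\varmin(H_{i+1},\si)$ persist at every stage of the induction.
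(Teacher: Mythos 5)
Your proof is correct and takes essentially the same route as the paper: the paper likewise proves by induction (modeled on Proposition \ref{prpnlarge}) that for $G\in\mathcal{S}_n$ one has $(\cup_{k\in G}F_k,\si)\in\mathscr{G}_{n_0+n}$ with $\varmin(\cup_{k\in G}F_k,\si)=|\si\wedge\si_{\min G}|$ and $\varmax(\cup_{k\in G}F_k,\si)=|\si\wedge\si_{\max G}|$ --- a skipped branching at the first level and attached branchings thereafter --- and then concludes via $\|x\|\geqslant\|P_{n_0+n}x\|_{n_0+n}$ exactly as you do. Your two refinements (replacing $F_k$ by $F_k'=F_k\cap\{\tau:\xi_\tau\in\supp(x_k)\}$ to make the cross terms vanish, and passing to a tail so that $\si\wedge\si_1\neq\varnothing$ and $|\si\wedge\si_k|\geqslant k$) are bookkeeping points the paper leaves implicit, and you handle them correctly.
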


\begin{proof}
By changing the signs of the $x_k$, we may assume that $F_k(x_k) > c$ for all $k\inn$.

Arguing in a similar manner as in the proof of Proposition \ref{prpnlarge} one can inductively prove that for every $n\inn$ and $G\in\mathcal{S}_n$ the following hold.

\begin{itemize}

\item[(a)] $(\cup_{k\in G}F_k, \si)\in\mathscr{G}_{n_0+n}$

\item[(b)] $\varmin(\cup_{k\in G}F_k, \si) = |\si\wedge\si_{\min G}|$ and $\varmax(\cup_{k\in G}F_k, \si) = |\si\wedge\si_{\max G}|$

\end{itemize}

 Since $\{x_k\}_k$ is unconditional, it is enough find positive constants $c_n>0$, such that fixing $G\in\mathcal{S}_n$ and $\{\la_k\}_{k\in G}$ non negative reals, we have the following.
\begin{equation*}
\|\sum_{k\in G}\la_kx_k\| > c_n\sum_{k\in G}\la_k
\end{equation*}
Properties (a) and (b), yield that $F = \cup_{k\in G}F_k\in\mathcal{G}_{n_0 + n}$. This means the following.
\begin{equation*}
\|\sum_{k\in G}\la_kx_k\| \geq \|P_{n_0+n}(\sum_{k\in G}\la_kx_k)\|_{n_0+n} = \frac{2}{2^{n_0+n}}\|\sum_{k\in G}\la_kx_k\|_{n_0+n} > \frac{2c}{2^{n_0+n}}\sum_{k\in G}\la_k
\end{equation*}
\end{proof}

\begin{lem}
Let $\{x_k\}_k$ be a normalized, disjointly supported block sequence of $\{\tilde{e}_\xi\}_{\xi<\cont}$, such that the following holds. There exist $c>0, n_0\inn, \si\in\cantor$, a sequence $\{F_k\}_k$ in $\mathcal{G}_{n_0}$ satisfying the following.
\begin{itemize}

\item[(i)] $|F_k(x_k)| > c$ for all $k\inn$.

\item[(ii)] The sets $F_k$ are pairwise disjoint

\item[(iii)] $(F_k,\si)\in\mathscr{G}_{n_0}$ for all $k\inn$

\item[(iv)] $\varmax(F_k,\si) < \varmin(F_{k+1},\si)$, for all $k\inn$

\end{itemize}
Then $\{x_k\}_k$ generates an $\ell_1^n$ spreading model for every $n\inn$.\label{lemelloneattached}
\end{lem}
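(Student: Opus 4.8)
The plan is to mirror the proof of Lemma \ref{lemelloneskipped}, replacing the skipped-branching construction with the attached-branching one. The essential difference is that in Lemma \ref{lemelloneskipped} the pairs $(F_k,\si_k)$ had distinct second coordinates and were combined via Definition \ref{defskipped}, whereas here all the $(F_k,\si)$ share the same $\si$ and the ordering condition (iv) is precisely the separation $\varmax(F_k,\si) < \varmin(F_{k+1},\si)$ required by Definition \ref{defattached}. So the whole proof reduces to checking that, along a Schreier set $G\in\mathcal{S}_n$, the family $(F_k,\si)_{k\in G}$ forms an attached branching of $\si$ in $\mathscr{G}_{n_0}$, whence $(\cup_{k\in G}F_k,\si)\in\mathscr{G}_{n_0+n}$.

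First I would change signs so that $F_k(x_k) > c$ for all $k$. Then, arguing by induction on $n$ exactly as in Proposition \ref{prpnlarge} and in Lemma \ref{lemelloneskipped}, I would prove that for every $n\inn$ and every $G\in\mathcal{S}_n$ the following hold:
\begin{itemize}
\item[(a)] $(\cup_{k\in G}F_k,\si)\in\mathscr{G}_{n_0+n}$;
\item[(b)] $\varmin(\cup_{k\in G}F_k,\si) = \varmin(F_{\min G},\si)$ and $\varmax(\cup_{k\in G}F_k,\si) = \varmax(F_{\max G},\si)$.
\end{itemize}
The inductive step uses that if $G\in\mathcal{S}_{n+1}$ decomposes as $\min G\leqslant G_1<\cdots<G_d$ with each $G_i\in\mathcal{S}_n$, then the blocks $H_i = \cup_{k\in G_i}F_k$ satisfy, by the inductive hypothesis and assumption (iv), the separation $\varmax(H_i,\si) < \varmin(H_{i+1},\si)$, and the cardinality bound $d\leqslant\min G\leqslant\varmin(F_{\min G},\si)=\varmin(H_1,\si)$ from Definition \ref{defattached}(iii). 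Pairwise disjointness of the $H_i$ follows from assumption (ii). Hence $(H_i,\si)_{i=1}^d$ is an attached branching of $\si$ in $\mathscr{G}_{n_0+n}$, giving $(\cup_{k\in G}F_k,\si)\in\mathscr{G}_{n_0+n+1}$ together with the min/max bookkeeping of (b).

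Given (a), the lower $\ell_1^n$ estimate is immediate and identical to the skipped case. By unconditionality of $\{x_k\}_k$ it suffices to produce constants $c_n>0$ so that for $G\in\mathcal{S}_n$ and nonnegative reals $\{\la_k\}_{k\in G}$ one has $\|\sum_{k\in G}\la_k x_k\| > c_n\sum_{k\in G}\la_k$. Setting $F=\cup_{k\in G}F_k\in\mathcal{G}_{n_0+n}$ by (a), and using $\|x\|\geqslant\|P_{n_0+n}x\|_{n_0+n}=\tfrac{2}{2^{n_0+n}}\|x\|_{n_0+n}$ together with $F(x_k)>c$ and the disjointness of the $F_k$, I obtain
\begin{equation*}
\big\|\sum_{k\in G}\la_k x_k\big\| \geqslant \frac{2}{2^{n_0+n}}\Big|F\big(\sum_{k\in G}\la_k x_k\big)\Big| > \frac{2c}{2^{n_0+n}}\sum_{k\in G}\la_k,
\end{equation*}
so $c_n = 2c/2^{n_0+n}$ works. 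The only point requiring care—and the main obstacle—is the inductive verification of (a), specifically confirming that the attached-branching conditions of Definition \ref{defattached} propagate correctly up the Schreier hierarchy; once that bookkeeping is in place, the norm estimate is routine.
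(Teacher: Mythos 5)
Your proposal is correct and follows essentially the paper's own route: the paper proves this lemma by declaring it identical to the proof of Lemma \ref{lemelloneskipped}, which is precisely the argument you spell out (induction as in Proposition \ref{prpnlarge} showing $(\cup_{k\in G}F_k,\si)\in\mathscr{G}_{n_0+n}$ for $G\in\mathcal{S}_n$, here via attached branchings, followed by the same $P_{n_0+n}$ norm estimate). Your added bookkeeping---that $\varmin(F_k,\si)\geqslant k$ follows from hypothesis (iv) together with $\varmin\leqslant\varmax$, which yields the cardinality condition $d\leqslant\varmin(H_1,\si)$ of Definition \ref{defattached}---is exactly the verification the paper leaves implicit.
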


\begin{proof}

The proof is identical to the proof of Lemma \ref{lemelloneskipped}.

\end{proof}

\begin{lem}
Let $\{x_k\}_k$ be a sequence in $\X$ and $n\inn$ such that $\lim_k\|P_nx_k\|_n = 0$. Then for every $\e > 0$ there exists $k_0\inn$ such that for every $k\geqslant k_0$ the following holds.
\begin{equation*}
|F(x_k)| < \e\quad\text{for every}\;F\in\mathcal{G}_{n}
\end{equation*}\label{lempnzero}
\end{lem}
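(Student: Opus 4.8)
The plan is to observe that this lemma is essentially a reformulation of the hypothesis, obtained by unwinding the two relevant definitions. First I would recall from the definition of $P_n$ that $P_nx_k = \frac{1}{2^n}x_k$, and that $\|\cdot\|_n$ is a norm and hence positively homogeneous, so that
\begin{equation*}
\|P_nx_k\|_n = \Big\|\frac{1}{2^n}x_k\Big\|_n = \frac{1}{2^n}\|x_k\|_n,\qquad\text{equivalently}\qquad \|x_k\|_n = 2^n\|P_nx_k\|_n.
\end{equation*}
Next I would invoke Definition \ref{defxn} (ii), which states that $\|x_k\|_n = \sup\{|F(x_k)|:\;F\in\mathcal{G}_n\}$. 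Combining these, for every $F\in\mathcal{G}_n$ we obtain the pointwise bound $|F(x_k)| \leqslant \|x_k\|_n = 2^n\|P_nx_k\|_n$.

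With this inequality in hand the conclusion is immediate. Given $\e>0$, the hypothesis $\lim_k\|P_nx_k\|_n = 0$ furnishes a $k_0\inn$ such that $\|P_nx_k\|_n < \e/2^n$ for all $k\geqslant k_0$; for any such $k$ and any $F\in\mathcal{G}_n$ the displayed bound then gives $|F(x_k)| \leqslant 2^n\|P_nx_k\|_n < \e$, as required. The only point that deserves a word of care is that the $x_k$ live in the completion $\X$ rather than in $c_{00}(\cont)$, so one should note that each functional $F\in\mathcal{G}_n$ extends continuously to $\X$ (indeed $|F(\cdot)| \leqslant \|\cdot\|_n = 2^n\|P_n(\cdot)\|_n \leqslant 2^n\|\cdot\|$, using $\|P_n\|\leqslant 1$), so every quantity above is well defined and the homogeneity and supremum identities pass to the completion. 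Beyond this routine check there is no genuine obstacle: the statement is simply the translation of $\lim_k\|P_nx_k\|_n = 0$ into uniform smallness of the functionals in $\mathcal{G}_n$, which is the form in which it will be convenient to apply it later.
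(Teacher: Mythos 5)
Your proposal is correct and coincides with the paper's own proof: both simply use $\|P_nx_k\|_n = \frac{1}{2^n}\|x_k\|_n$ together with the definition $\|x_k\|_n = \sup\{|F(x_k)| : F\in\mathcal{G}_n\}$ and choose $k_0$ so that $\|P_nx_k\|_n < \e/2^n$ for $k\geqslant k_0$. Your additional remark that each $F\in\mathcal{G}_n$ extends continuously to the completion $\X$ is a harmless (and reasonable) elaboration that the paper leaves implicit.
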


\begin{proof}

Fix $\e>0$. Choose $k_0\inn$, such that $\|P_{n}x_k\|_{n} = \frac{1}{2^{n}}\|x_k\|_{n} < \frac{1}{2^{n}}\e$, for every $k\geqslant k_0$. By definition of the norm $\|\cdot\|_n$, this means the following.
\begin{equation*}
|F(x_k)| < \e\quad\text{for every}\;F\in\mathcal{G}_{n}
\end{equation*}
\end{proof}

\begin{lem}
Let $\{x_k\}_k$ be a normalized, disjointly supported block sequence of $\{\tilde{e}_\xi\}_{\xi<\cont}$, such that $\lim_k\|x_k\|_\infty = 0$ and there exists $n\inn$ such that $\lim\sup_k\|P_nx_k\|_n > 0$. Assume moreover, that if $n_0 = \min\{n: \lim\sup_k\|P_nx_k\|_n > 0\}$, there exists $c>0, \si\in\cantor$ and $\{F_k\}_k$ a sequence in $\mathcal{G}_{n_0}$ satisfying the following.
\begin{itemize}

\item[(i)] $|F_k(x_k)| > c$ for all $k\inn$.

\item[(ii)] The sets $F_k$ are pairwise disjoint

\item[(iii)] $(F_k,\si)\in\mathscr{G}_{n_0}$ for all $k\inn$

\end{itemize}
Then $\{x_k\}_k$ has a subsequence that generates an $\ell_1^n$ spreading model for every $n\inn$.\label{lemFkattached}
\end{lem}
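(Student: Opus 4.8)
The plan is to reduce this lemma to the two preceding lemmas (Lemma \ref{lemelloneskipped} and Lemma \ref{lemelloneattached}) by extracting, after passing to a subsequence, a geometric configuration of the $\si_k$-like data that matches the hypotheses of one of them. The given data provides, for each $k$, a pair $(F_k,\si)\in\mathscr{G}_{n_0}$ with a common second coordinate $\si$, with $|F_k(x_k)|>c$, and with the $F_k$ pairwise disjoint; what is missing is exactly the branching structure (skipped or attached) that the earlier lemmas require. The first move is to apply Lemma \ref{lemmineq} to each $(F_k,\si)$: for those $k$ with $\#F_k\geqslant 2$ we obtain $\si_k\in\cantor$ with $(F_k,\si_k)\in\mathscr{G}_{n_0}$ and $\varmin(F_k,\si_k)=\min\{|\tau_1\wedge\tau_2|:\tau_1,\tau_2\in F_k,\ \tau_1\neq\tau_2\}$, i.e.\ the minimal branching value is realized. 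I would like to arrange that $\varmin(F_k,\si_k)\to\infty$, which is what will eventually let me verify condition (v) of Lemma \ref{lemelloneskipped}.

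First I would dispose of the degenerate case. If infinitely many $F_k$ are singletons, say $F_k=\{\tau_k\}$, then passing to that subsequence the vectors behave like a subsequence of the basis (each $F_k(x_k)=\la_{\xi_{\tau_k}}$ is a single coordinate bounded below by $c$), and since $\lim_k\|x_k\|_\infty=0$ forces these coordinates to shrink we would reach a contradiction with $|F_k(x_k)|>c$; hence only finitely many $F_k$ are singletons and we may assume $\#F_k\geqslant 2$ for all $k$. Then I claim $\varmin(F_k,\si_k)\to\infty$: if not, after passing to a subsequence $\varmin(F_k,\si_k)\leqslant d$ for some fixed $d$, so each $F_k$ contains two branches agreeing up to depth at most $d$; combined with $\lim_k\|x_k\|_\infty=0$ and $|F_k(x_k)|>c$ (which forces $F_k$ to gather mass from many coordinates inside a bounded-depth subtree of $\cantor$, of which there are only finitely many nodes up to that depth) one contradicts the minimality of $n_0$ or pairwise disjointness. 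This is the step I expect to be the main obstacle: turning the analytic hypothesis $\lim_k\|x_k\|_\infty=0$ together with the lower bound $|F_k(x_k)|>c$ into the geometric conclusion $\varmin(F_k,\si_k)\to\infty$, since it requires controlling how the functionals $F_k$ distribute their mass across the dyadic tree.

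Once $\varmin(F_k,\si_k)\to\infty$ is secured, I would choose, by a Ramsey/diagonalization argument on the tree $\cantor$, a subsequence along which the sequence $\{\si_k\}_k$ is well behaved relative to a fixed $\si'\in\cantor$. Concretely I would pass to a subsequence so that the meets $\si_k\wedge\si_{k+1}$ are strictly increasing along the tree, converging to some $\si'$, and so that $|\si'\wedge\si_k|<\varmin(F_k,\si_k)$ for all $k$ (possible precisely because $\varmin(F_k,\si_k)\to\infty$ while the successive meets can be kept bounded at each stage). Using Lemma \ref{lemsiprime}, replacing $\si$ by $\si'$ does not affect membership in $\mathscr{G}_{n_0}$ nor the $\varmin$ values as long as $\si'\wedge\tau=\si_k\wedge\tau$ on each $F_k$, which I arrange in the selection. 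This produces exactly the data of Lemma \ref{lemelloneskipped}: pairwise disjoint $F_k$, a common $\si'$, the meets $\si'\wedge\si_k$ strictly increasing, and $|\si'\wedge\si_k|<\varmin(F_k,\si_k)$, together with $|F_k(x_k)|>c$.

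Finally I would invoke Lemma \ref{lemelloneskipped} on this subsequence to conclude that $\{x_k\}_k$ generates an $\ell_1^n$ spreading model for every $n\inn$, which is the assertion. (In the alternative scenario where the selection naturally yields the interleaving condition $\varmax(F_k,\si')<\varmin(F_{k+1},\si')$ rather than the skipped-branching inequalities, one applies Lemma \ref{lemelloneattached} instead; either way the conclusion is the same.) The overall shape is therefore: remove singletons, prove $\varmin(F_k,\si_k)\to\infty$, select a subsequence realizing a skipped branching of a limit point $\si'$, and apply the relevant $\ell_1^n$-generation lemma.
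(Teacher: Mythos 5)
There is a genuine gap, and it sits exactly where you predicted: the claim that, after removing singletons, one can pass to a subsequence with $\varmin(F_k,\si_k)\to\infty$ is not just unproven but false under the hypotheses. Take $n_0=2$ and let each $(F_k,\si)$ be skipped, $F_k=\bigcup_{i=1}^{d_k}F_i^k$ with $d_k\to\infty$, where each piece $F_i^k\in\mathcal{G}_1$ carries action roughly $c/d_k$ on $x_k$ (so $\|x_k\|_1\to 0$, consistent with $n_0=2$, and $\|x_k\|_\infty\to 0$), but the first piece splits from $\si$ at depth $1$, i.e.\ $|\si\wedge\si_1^k|=1$. By Lemma \ref{lemminmaxbetween} (iii) and Lemma \ref{littlelemma}, any $\tau_1\in F_1^k$ and $\tau_2\in F_2^k$ satisfy $|\tau_1\wedge\tau_2|=|\si\wedge\si_1^k|=1$, so $\min\{|\tau_1\wedge\tau_2|:\tau_1\neq\tau_2\in F_k\}=1$ for every $k$, and by Lemma \ref{lemmineq} and Corollary \ref{corminbound} no choice of $\si_k$ can make $\varmin(F_k,\si_k)$ large. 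Your contradiction sketch fails because a bounded meet between \emph{two} elements of $F_k$ does not confine $F_k$ to a bounded-depth subtree: the elements are infinite branches, and the bulk of the mass of $F_k(x_k)$ can sit on pieces splitting arbitrarily deep, so neither the minimality of $n_0$ nor disjointness is violated. Your subsequent selection of $\si'$ with $\si'\wedge\tau=\si_k\wedge\tau$ on each $F_k$ (to invoke Lemma \ref{lemsiprime}) is also only asserted, but the argument already breaks at the previous step.

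The missing idea is \emph{truncation}: one should not try to make $\varmin$ large for the whole $F_k$, but rather extract a subset $G_k\subset F_k$ carrying most of the action with $\varmin(G_k,\si)$ large, keeping the \emph{common} $\si$ throughout. This is what the paper does. Fix $m$; since $n_0$ is minimal, $\lim_k\|P_{n_0-1}x_k\|_{n_0-1}=0$, so by Lemma \ref{lempnzero} one finds $k$ with $|F(x_k)|<c/(2m)$ for all $F\in\mathcal{G}_{n_0-1}$; in particular $(F_k,\si)\notin\mathscr{G}_{n_0-1}$, hence it decomposes as a skipped or attached branching $(F_i^k)_{i=1}^d$ over $\mathscr{G}_{n_0-1}$. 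Setting $G_k=\bigcup_{i=m+1}^dF_i^k$, the $m$ discarded pieces each lie in $\mathcal{G}_{n_0-1}$, so $|G_k(x_k)|>c-m\cdot\frac{c}{2m}=c/2$, while $(G_k,\si)\in\mathscr{G}_{n_0}$ by Proposition \ref{prphereditary}, and $\varmin(G_k,\si)>m$ because the quantities $|\si\wedge\si_i^k|$ (skipped case) or $\varmin(F_i^k,\si)$ (attached case) strictly increase in $i$. Iterating with $m$ chosen larger than $\varmax$ of the previously selected set produces the interleaving $\varmax(G_{k_j},\si)<\varmin(G_{k_{j+1}},\si)$, i.e.\ exactly the hypotheses of Lemma \ref{lemelloneattached} — the skipped-branching Lemma \ref{lemelloneskipped}, and with it Lemmas \ref{lemmineq} and \ref{lemsiprime}, never enter. (Also note the hypothesis $\|x_k\|_\infty\to 0$ is needed only in the case $n_0=1$, where the discarded pieces are singletons; your plan instead leaned on it as the main analytic engine, which is another sign the route cannot work as stated.)
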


\begin{proof}

We shall prove that for every $k_0, m$ natural numbers, there exist $k\geqslant k_0$ and $G_k\subset F_k$ such that $|G_k(x_k)| > c/2$ and $\varmin(G_k,\si) > m$.

If the above statement is true, we may clearly choose $\{G_k\}_k$ in $\mathcal{G}_{n_0}$ satisfying the assumptions of Lemma \ref{lemelloneattached}, which will complete the proof.

We assume that $n_0\geqslant 2$, as the case $n_0 = 1$ uses similar arguments and the fact that $\lim_k\|x_k\|_\infty = 0$.

Fix $k_0,m\inn$. By Lemma \ref{lempnzero}, choose $k\geqslant k_0$, such that the following holds.
\begin{equation}
|F(x_k)| < \frac{c}{2m}\quad\text{for every}\;F\in\mathcal{G}_{n_0-1}\label{eq2lemFk}
\end{equation}

We distinguish two cases.
\begin{itemize}

\item[{\em Case 1:}] There is $(F_i^k,\si_i^k)_{i=1}^d$ a skipped branching of $\si$ in $\mathscr{G}_{n_0-1}$ with $F_k = \cup_{i=1}^dF_i^k$.

\item[{\em Case 2:}] There is $(F_i^k,\si)_{i=1}^d$ an attached branching of $\si$ in $\mathscr{G}_{n_0-1}$ with $F_k = \cup_{i=1}^dF_i^k$.

\end{itemize}

In either case, by Proposition \ref{prphereditary} we have that if we set $G_k = \cup_{i=m+1}^dF_i^k$, then $(G_k,\si)\in\mathscr{G}_{n_0}$. Moreover, \eqref{eq2lemFk} yields that $|G_k(x_k)| > c/2$.

All that remains, is to show that $\varmin(G_k,\si) > m$.

If we are in case 1, then $\varmin(G_k,\si) = |\si\wedge\si_{m+1}^k|$. By Definition \ref{defskipped} we have that $|\si\wedge\si_{i}^k| < |\si\wedge\si_{i+1}^k|$ for $i=1,\ldots,m$, which of course yields that $|\si\wedge\si_{m+1}^k| > m$.

If, on the other hand, we are in case 2, then $\varmin(G_k,\si) = \varmin(F_{m+1}^k,\si)$. By Definition \ref{defattached} we have that $\varmin(F_i^k,\si) \leqslant \varmax(F_{i}^k,\si) < \varmin(F_{i+1}^k,\si)$ for $i=1,\ldots,m$, which yields that $\varmin(F_{m+1}^k,\si) > m$.

\end{proof}

\begin{lem}
Let $\{x_k\}_k$ be a normalized, disjointly supported block sequence of $\{\tilde{e}_\xi\}_{\xi<\cont}$, such that there exists $n\inn$ such that $\lim\sup_k\|P_nx_k\|_n > 0$. Then, passing if necessary, to a subsequence, there exist $c>0$ and $(F_k,\si_k)\in\mathscr{G}_n$ satisfying the following.

\begin{itemize}

\item[(i)] The $F_k$ are pairwise disjoint.

\item[(ii)] $|F_k(x_k)| > c$ for all $k\inn$.

\end{itemize}\label{lempnnotzero}
\end{lem}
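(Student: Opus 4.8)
The plan is to extract the functionals $F_k$ directly from the definition of the norm $\|\cdot\|_n$ and to obtain their pairwise disjointness for free from the disjointness of the supports of the $x_k$; no iterative or branching construction is needed, since the conclusion asks only for membership $(F_k,\si_k)\in\mathscr{G}_n$ with no compatibility relation across different $k$.

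First I would translate the hypothesis. Since $P_nx=\frac{1}{2^n}x$, we have $\|P_nx_k\|_n=\frac{1}{2^n}\|x_k\|_n$, so the assumption $\limsup_k\|P_nx_k\|_n>0$ is equivalent to $\limsup_k\|x_k\|_n>0$. Fixing any $c$ with $0<c<\limsup_k\|x_k\|_n$, the set $\{k:\|x_k\|_n>c\}$ is infinite; passing to it and relabelling, I may assume $\|x_k\|_n>c$ for every $k$.

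Next I would read off the witnessing functionals. By Definition \ref{defxn} we have $\|x_k\|_n=\sup\{|F(x_k)|:F\in\mathcal{G}_n\}$, so for each $k$ there is $\tilde F_k\in\mathcal{G}_n$ with $|\tilde F_k(x_k)|>c$; in particular $\tilde F_k\neq\varnothing$. The one point requiring care is to make the $F_k$ pairwise disjoint, and this is where the hypothesis that $\{x_k\}_k$ is disjointly supported enters. Writing $x_k=\sum_\xi\la_\xi^k e_\xi$, the coefficient $\la^k_{\xi_\tau}$ vanishes whenever $\xi_\tau\notin\supp(x_k)$, so deleting from $\tilde F_k$ every $\tau$ with $\xi_\tau\notin\supp(x_k)$ leaves the value $\tilde F_k(x_k)=\sum_{\tau\in\tilde F_k}\la^k_{\xi_\tau}$ unchanged. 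Let $F_k$ be the resulting set. Since $\mathcal{G}_n$ is hereditary (Remark \ref{rmkincr}) and $F_k\neq\varnothing$, we have $F_k\in\mathcal{G}_n$, and $|F_k(x_k)|=|\tilde F_k(x_k)|>c$, which is (ii). Moreover $F_k\subset\{\tau:\xi_\tau\in\supp(x_k)\}$, and since the supports $\supp(x_k)$ are pairwise disjoint so are these sets; hence the $F_k$ are pairwise disjoint, which is (i).

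Finally, each $F_k$ is a nonempty element of $\mathcal{G}_n$, so by Definition \ref{defggn} there is $\si_k\in\cantor$ with $(F_k,\si_k)\in\mathscr{G}_n$, completing the extraction. I do not expect a genuine obstacle here: the argument is merely an extraction of witnesses from a $\limsup$ condition, and the only subtle step is the restriction of $\tilde F_k$ to the support of $x_k$, which simultaneously preserves the functional value (via the vanishing of off-support coordinates), keeps us inside $\mathcal{G}_n$ (via heredity), and yields the disjointness of the $F_k$ out of the disjointness of the supports.
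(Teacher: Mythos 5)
Your proof is correct and takes essentially the same route as the paper: pass to a subsequence on which $\|x_k\|_n$ is bounded below by some $c>0$, extract witnessing pairs $(F_k,\si_k)\in\mathscr{G}_n$ from the definition of the norm $\|\cdot\|_n$, and use heredity (Proposition \ref{prphereditary}) together with the disjointness of the supports to replace each $F_k$ by its restriction to $\{\tau:\xi_\tau\in\supp(x_k)\}$, making the $F_k$ pairwise disjoint without changing the values $F_k(x_k)$. The only difference is cosmetic: the paper compresses this restriction-to-support step into a single sentence, whereas you spell out why the functional value is preserved and why the restricted set is nonempty and stays in $\mathcal{G}_n$.
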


\begin{proof}
Pass to a subsequence of $\{x_k\}_k$ and choose $\e>0$, such that the following holds.
\begin{equation*}
\|P_nx_k\|_n = \frac{1}{2^n}\|x_k\|_n > \e,\quad \text{for all}\;k\inn.
\end{equation*}
 By the definition of the norm $\|\cdot\|_n$, there exist $(F_k,\si_k)\in\mathscr{G}_n$ with $|F_k(x_k)| > 2^n\e$, for all $k\inn$. By virtue of Proposition \ref{prphereditary} and the fact that $\{x_k\}_k$ is disjointly supported, we may assume that the $F_k$ are pairwise disjoint. Setting $c = 2^n\e$ finishes the proof.
\end{proof}

\begin{prp}
Let $\{x_k\}_k$ be a normalized, disjointly supported block sequence of $\{\tilde{e}_\xi\}_{\xi<\cont}$, such that $\lim_k\|x_k\|_\infty = 0$ and there exists $n\inn$ such that $\lim\sup_k\|P_nx_k\|_n > 0$. Then $\{x_k\}_k$ has a subsequence that generates an $\ell_1^n$ spreading model for every $n\inn$.\label{prppnnotzero}
\end{prp}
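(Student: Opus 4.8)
The plan is to feed the data produced by Lemma~\ref{lempnnotzero} into either the ``skipped'' Lemma~\ref{lemelloneskipped} or the ``attached'' Lemma~\ref{lemFkattached}, after a careful three--way decomposition of the functionals. Set $n_0=\min\{n:\ \limsup_k\|P_nx_k\|_n>0\}$. Applying Lemma~\ref{lempnnotzero} with $n=n_0$ and passing to a subsequence, I obtain $c>0$ and pairs $(F_k,\si_k)\in\mathscr{G}_{n_0}$ with the $F_k$ pairwise disjoint and $|F_k(x_k)|>c$ for all $k$. By compactness of $\cantor$ I pass to a further subsequence so that $\si_k\to\si$ for some $\si\in\cantor$. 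If $\si_k=\si$ for infinitely many $k$, I restrict to that subsequence and apply Lemma~\ref{lemFkattached} directly to $(F_k,\si)$, which finishes this case; note that minimality of $n_0$ is preserved on the subsequence, since $\lim_k\|P_nx_k\|_n=0$ for $n<n_0$ persists and $\|P_{n_0}x_k\|_{n_0}$ stays bounded away from $0$.

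So assume $\si_k\neq\si$ for all $k$; then $|\si\wedge\si_k|\to\infty$, and I set $p_k=|\si\wedge\si_k|$ and $w_k=\si\wedge\si_k$. Each $\tau\in F_k$ falls into exactly one of three classes according to whether $|\si_k\wedge\tau|$ is $<p_k$, $=p_k$, or $>p_k$; write $F_k=G_k\cup E_k\cup H_k$ for the corresponding subsets. For $\tau\in G_k$ both $\si_k\wedge\tau$ and $w_k$ are initial segments of $\si_k$, so $\si_k\wedge\tau\cneq\si_k\wedge\si$; hence Lemma~\ref{littlelemma} (with pivot $\si_k$) gives $\si\wedge\tau=\si_k\wedge\tau$, and combining Proposition~\ref{prphereditary} with Lemma~\ref{lemsiprime} yields $(G_k,\si)\in\mathscr{G}_{n_0}$. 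For $\tau\in H_k$ one has $|\si_k\wedge\tau|>p_k$, so $(H_k,\si_k)\in\mathscr{G}_{n_0}$ by Proposition~\ref{prphereditary} and $\varmin(H_k,\si_k)>p_k=|\si\wedge\si_k|$ by Corollary~\ref{corminmax}. Finally, for $\tau\in E_k$ the segments $\si_k\wedge\tau$ and $w_k$ are initial segments of $\si_k$ of the same length $p_k$, hence $\si_k\wedge\tau=w_k$ is constant over $E_k$.

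The crux is the diagonal class $E_k$, and the claim is that $E_k\in\mathcal{G}_{n_0-1}$ when $n_0\geq 2$ (when $n_0=1$ the constancy of $\si_k\wedge\tau$ forces $\#E_k\leq 1$ directly from Definition~\ref{defg1}). Indeed $(E_k,\si_k)\in\mathscr{G}_{n_0}$ by Proposition~\ref{prphereditary}, and $\varmin(E_k,\si_k)=\varmax(E_k,\si_k)=p_k$ by Corollary~\ref{corminmax}; inspecting Definition~\ref{defgn} for $\mathscr{G}_{n_0}=\mathscr{G}_{(n_0-1)+1}$, a nontrivial attached branching is excluded by the strict inequalities $\varmax(F_i,\si_k)<\varmin(F_{i+1},\si_k)$ of Definition~\ref{defattached}, and a nontrivial skipped branching is excluded because Definition~\ref{defskipped} demands the strict chain $\si_k\wedge\si_1\cneq\cdots\cneq\si_k\wedge\si_d$ while Lemma~\ref{lemminmaxbetween}(iii) forces every $\si_k\wedge\si_i$ to equal the constant $w_k$. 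Thus the branching is trivial and $(E_k,\cdot)\in\mathscr{G}_{n_0-1}$. Since $n_0$ is minimal, $\lim_k\|P_{n_0-1}x_k\|_{n_0-1}=0$, so Lemma~\ref{lempnzero} gives $|E_k(x_k)|<c/3$ for all large $k$ (for $n_0=1$ the same bound follows from $\lim_k\|x_k\|_\infty=0$ and $\#E_k\leq 1$). Consequently $|G_k(x_k)|+|H_k(x_k)|>2c/3$, and after passing to a subsequence either $|G_k(x_k)|>c/3$ for all $k$ or $|H_k(x_k)|>c/3$ for all $k$. In the first case $(G_k,\si)_k$ meets the hypotheses of Lemma~\ref{lemFkattached}; in the second, after thinning so that $|\si\wedge\si_k|$ is strictly increasing (whence $\si\wedge\si_k\cneq\si\wedge\si_{k+1}$), the data $(H_k,\si_k)_k$ together with $\si$ meets the hypotheses of Lemma~\ref{lemelloneskipped}. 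Either lemma produces the desired subsequence. The main obstacle is exactly the identification of $E_k$ as living one Schreier level below $n_0$, which is what renders it negligible via the minimality of $n_0$; once $E_k$ is discarded, the two extreme classes align precisely with the attached and skipped configurations already analyzed.
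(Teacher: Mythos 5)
Your proof is correct and follows essentially the same route as the paper: your three classes $G_k$, $E_k$, $H_k$ coincide exactly with the paper's $G_k''$, $G_k'$ and $G_k$ (you classify by comparing $|\si_k\wedge\tau|$ with $|\si\wedge\si_k|$, the paper by comparing $\si_k\wedge\tau$ with $\si\wedge\tau$, which amounts to the same trichotomy), and both arguments discard the middle class via minimality of $n_0$ and Lemma \ref{lempnzero}, then feed the two extreme classes into Lemmas \ref{lemFkattached} and \ref{lemelloneskipped}. The only cosmetic deviations are your direct pigeonhole between the masses of $G_k$ and $H_k$ in place of the paper's dichotomy on $\lim_k\max\{|G(x_k)|:\ G\subset F_k,\ (G,\si)\in\mathscr{G}_{n_0}\}$, and your inline branching-exclusion argument showing $E_k\in\mathcal{G}_{n_0-1}$, which re-proves exactly the special case of Lemma \ref{lemnminusone} that the paper cites.
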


\begin{proof}
Set $n_0 = \min\{n: \lim\sup_k\|P_nx_k\|_n > 0\}$ and as in the proof of Lemma \ref{lemFkattached} let us assume that $n_0\geqslant 2$. Apply Lemmas \ref{lempnnotzero} and \ref{lempnzero}, pass to a subsequence of $\{x_k\}_k$ and find $c>0$, $(F_k,\si_k)\in\mathscr{G}_{n_0}$ such that the following are satisfied.
\begin{itemize}

\item[(i)] The $F_k$ are pairwise disjoint.

\item[(ii)] $|F_k(x_k)| > c$ for all $k\inn$.

\item[(iii)] $|F(x_k)| < c/4$ for every $k\inn$ and $F\in\mathcal{G}_{n_0-1}$.

\end{itemize}

Passing to a further subsequence, choose $\si\in\cantor$ such that $\lim_k\si_k = \si$. We distinguish two cases.

\begin{itemize}

\item[{\em Case 1:}] $\lim_k\max\{|G(x_k)|: G\subset F_k$\; with \;$(G,\si)\in\mathscr{G}_{n_0}\} = 0$

\item[{\em Case 2:}] $\lim\sup_k\max\{|G(x_k)|: G\subset F_k$\; with \;$(G,\si)\in\mathscr{G}_{n_0}\} > 0$

\end{itemize}

Let us first treat case 1. Pass once more to a subsequence of $\{x_k\}_k$, satisfying the following.

\begin{itemize}

\item[(a)] $\max\{|G(x_k)|: G\subset F_k$\; with \;$(G,\si)\in\mathscr{G}_{n_0}\} < c/4$, for all $k\inn$.

\item[(b)] $\si\neq\si_k$, for every $k\inn$.

\item[(c)] $\si\wedge\si_k \cneq \si\wedge\si_{k+1}$ for all $k\inn$.

\end{itemize}

We shall prove the following. For every $k$, there exists $G_k\subset F_k$, such that the following hold.
\begin{itemize}

\item[(d)] $|G_k(x_k)| > c/2$

\item[(e)] $|\si\wedge\si_k| < \varmin(G_k,\si_k)$

\end{itemize}

Combining (b), (c), (d) and (e), we conclude that the assumptions of Lemma \ref{lemelloneskipped} are satisfied, which proves the desired result, in case 1.

Set $G_k^{\prime\prime} = \{\tau\in F_k: \si_k\wedge\tau = \si\wedge\tau\}$. Proposition \ref{prphereditary} and Lemma \ref{lemsiprime} yield that $(G_k^{\prime\prime},\si)\in\mathscr{G}_{n_0}$. Setting $F_k^\prime = F_k\setminus G_k^{\prime\prime}$, property (a) yields that $|F_k^\prime(x_k)| > 3c/4$.

Set $G_k^\prime = \{\tau\in F_k^\prime: \si_k\wedge\tau \cneq \si\wedge\tau\}$. Once more, Proposition \ref{prphereditary} yields that $(G_k^\prime,\si_k)\in\mathscr{G}_{n_0}$, however Lemma \ref{lemnminusone} yields that $G_k^\prime\in\mathcal{G}_{n_0-1}$ and therefore, by (iii) we have that $|G_k^\prime(x_k)| < c/4$.

Set $G_k = F_k^\prime\setminus G_k^\prime$. Then we have that $|G_k(x_k)| > c/2$, i.e. (d) holds.

We will show that (e) also holds. By Corollary \ref{corminmax}, there exists $\tau\in G_k$, with $\varmin(G_k,\si_k) = |\si_k\wedge\tau|$. Since $\tau\notin G_k^{\prime\prime}$, we have that $|\si_k\wedge\tau| \neq |\si\wedge\tau|$.

We will show that $|\si\wedge\tau| < |\si_k\wedge\tau|$. Assume that this is not the case, i.e. $|\si_k\wedge\tau| < |\si\wedge\tau|$. In other words, $\si_k\wedge\tau \cneq \si\wedge\tau$. This means that $\tau\in G_k^\prime$, a contradiction.

We conclude that $\si\wedge\tau \cneq \si_k\wedge\tau$. Lemma \ref{littlelemma} yields that $\si\wedge\tau = \si_k\wedge\si$. Applying Lemma \ref{littlelemma} once more, we conclude that $\si\wedge\si_k \cneq \si_k\wedge\tau$, i.e. $|\si\wedge\si_k| < |\si_k\wedge\tau| = \varmin(G_k,\si_k)$, which completes the proof for case 1.

It only remains to treat case 2. Observe, that in this case, we may easily pass to a subsequence of $\{x_k\}_k$, satisfying the assumptions of Lemma \ref{lemFkattached}. This completes the proof.

\end{proof}

Combining Propositions \ref{supnorm}, \ref{pnzero} and \ref{prppnnotzero}, one obtains the following.

\begin{prp}
Let $\{x_k\}_k$ be a normalized weakly null sequence in $\X$. Then $\{x_k\}_k$ has a subsequence that generates an $\ell_1^n$ spreading model for every $n\inn$.\label{prpell1smwn}
\end{prp}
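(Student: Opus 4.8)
The plan is to combine the three preceding propositions by first reducing to a block sequence and then running a trichotomy. Since generating an $\ell_1^n$ spreading model is inherited by subsequences and is preserved under equivalence of basic sequences (hence under summable perturbations), it suffices to prove the conclusion for a normalized, disjointly supported block sequence of $\{\tilde{e}_\xi\}_{\xi<\cont}$, and then pull the result back to $\{x_k\}_k$. So the first task is to pass to such a block sequence.

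To carry out this reduction I would use that each $x_k$, as an element of the completion $\X$, has countable support with respect to the Schauder basis, whence $C=\bigcup_k\supp(x_k)$ is a countable subset of $\cont$ and the entire sequence lies in the separable subspace $\overline{\spn}\{\tilde{e}_\xi:\xi\in C\}$. Approximating each $x_k$ in norm by a finitely supported vector with errors summable along $k$, and using that the coordinate functionals tend to $0$ on the weakly null sequence, a gliding hump argument yields a subsequence whose finitely supported approximants have successive supports in the ordinal ordering of $C$. The principle of small perturbations then shows that this subsequence is equivalent to the resulting normalized block sequence $\{y_k\}_k$, and it remains only to treat $\{y_k\}_k$.

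Next I would run the trichotomy according to the hypotheses of the three propositions. If $\lim\sup_k\|y_k\|_\infty>0$, then after passing to a subsequence along which $\|y_k\|_\infty$ is bounded below, Proposition \ref{supnorm} applies. Otherwise $\lim_k\|y_k\|_\infty=0$, and I split according to the projections $P_n$: if $\lim_k\|P_ny_k\|_n=0$ for every $n\inn$, then Proposition \ref{pnzero} applies; if instead there exists $n\inn$ with $\lim\sup_k\|P_ny_k\|_n>0$, then Proposition \ref{prppnnotzero} applies. These subcases are exhaustive, since for a bounded nonnegative sequence failing $\lim_k\|P_ny_k\|_n=0$ for some $n$ is exactly $\lim\sup_k\|P_ny_k\|_n>0$ for that $n$. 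In each case the chosen proposition delivers a further subsequence generating an $\ell_1^n$ spreading model for every $n\inn$, and transporting this back through the equivalence with $\{x_k\}_k$ finishes the proof.

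The substantive analysis has already been absorbed into Propositions \ref{supnorm}, \ref{pnzero} and \ref{prppnnotzero}, so the only genuine point in this final step is the block-sequence reduction. The hard part will be that the basis is indexed by $\cont$ rather than by $\mathbb{N}$, so that the notions of block sequence, successive supports and gliding hump must be interpreted with respect to the ordinal ordering of an uncountable index set. This obstacle is resolved by the observation that a fixed countable family of vectors is supported on a countable set of coordinates, which localizes the whole argument inside a separable subspace; there the classical Bessaga--Pe\l czy\'nski selection and small-perturbation techniques apply verbatim, with ``successive supports'' read off from the induced well-ordering of $C$.
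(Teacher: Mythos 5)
Your proposal is correct and is essentially the paper's own argument: the paper proves this proposition by simply stating that it follows from combining Propositions \ref{supnorm}, \ref{pnzero} and \ref{prppnnotzero}, which is exactly your exhaustive trichotomy, with the standard small-perturbation/gliding-hump reduction of a normalized weakly null sequence to a normalized disjointly supported block sequence (via the countable support set $C$) left implicit. One minor inaccuracy that does not affect validity: a gliding hump cannot in general make the supports successive in the \emph{ordinal} ordering of $C$ (consider pairwise disjoint supports of the form $\{k,\omega+k\}$ in a set of order type $\omega+\omega$), but this is immaterial, since the three propositions only ever use disjointness of the finite supports, which your reduction does deliver.
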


\begin{prp}
The space $\X$ is saturated with subspaces of Tsirelson space.\label{prpTsat}
\end{prp}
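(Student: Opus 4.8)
The plan is to show that every infinite dimensional closed subspace $Y$ of $\X$ contains a further subspace isomorphic to a subspace of $T$; this is the meaning of saturation. Since any infinite dimensional subspace contains an infinite dimensional separable one, I may assume $Y$ separable. Every vector of $\X$ has countable support (its basis expansion is a norm convergent series), so taking a countable dense subset of $Y$ shows that $Y$ is supported on a countable set $\Gamma\subset\cont$. Relative to the unconditional basis $\{\tilde{e}_\gamma\}_{\gamma\in\Gamma}$ of $[\Gamma]=\overline{\spn}\{\tilde{e}_\gamma:\gamma\in\Gamma\}$ we are in the classical countable basis setting, so a Bessaga--Pe\l czy\'nski gliding hump argument yields a normalized block sequence $\{x_k\}_k$ equivalent to a normalized sequence in $Y$. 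It therefore suffices to produce a subsequence of $\{x_k\}_k$ equivalent to a block sequence of $T$, and by Proposition \ref{pnzero} this will follow once I arrange $\lim_k\|P_nx_k\|_n=0$ for every $n\inn$.

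The heart of the argument is the following non embedding lemma, which I would prove first. For every infinite dimensional block subspace $Z$ of $[\Gamma]$ and every $N\inn$ one has $\inf\{\|z\|_N:\;z\in Z,\;\|z\|=1\}=0$; equivalently, the formal identity $\X\to X_N$ is not bounded below on $Z$. I would argue by contradiction. If it were bounded below, then on $Z$ the norms $\|\cdot\|$ and $\|\cdot\|_N$ would be equivalent, the reverse estimate $\|z\|\geqslant 2^{-N}\|z\|_N$ being automatic from Definition \ref{defx} and the $1$-unconditionality of the basis of $T$. Since $X_N$ is $c_0$ saturated by Proposition \ref{prpc0sat}, a block subspace of $Z$ inside $X_N$ contains a normalized block sequence $\{u_k\}_k$ equivalent to the $c_0$ basis in $\|\cdot\|_N$; because the inclusion is formal (the vectors are literally unchanged) and the two norms are equivalent on $Z$, the very same vectors form a block sequence in $\X$ equivalent to the $c_0$ basis in $\|\cdot\|$. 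But the trichotomy of Propositions \ref{supnorm}, \ref{pnzero} and \ref{prppnnotzero} applies to any normalized disjointly supported block sequence and yields a subsequence generating an $\ell_1^n$ spreading model for every $n$. In particular, the order one Schreier estimate exactly as in Lemma \ref{lembasisell1} forces $\|u_{k_1}+\cdots+u_{k_m}\|$ to grow linearly in $m$ with a constant independent of $m$, contradicting the uniform boundedness of such sums for a $c_0$ basis.

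With the lemma in hand I would complete the construction by diagonalization. Having chosen successive normalized blocks $x_1,\dots,x_{k-1}$, I apply the lemma to the tail block subspace (the blocks supported beyond those already used) to select a normalized block $x_k$ with $\|x_k\|_k<1/k$. Since $\{\|\cdot\|_n\}_n$ is increasing by Remark \ref{rmkincr}, this gives $\|x_k\|_n\leqslant\|x_k\|_k<1/k$ for all $n\leqslant k$, hence $\lim_k\|x_k\|_n=0$ and so $\lim_k\|P_nx_k\|_n=2^{-n}\lim_k\|x_k\|_n=0$ for every $n\inn$. Proposition \ref{pnzero} then provides a subsequence equivalent to a block sequence of $T$, and transporting this through the gliding hump equivalence of the first paragraph exhibits a subspace of $Y$ isomorphic to a subspace of $T$.

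I expect the non embedding lemma of the second paragraph to be the main obstacle: everything else is routine once one knows that the formal maps $\X\to X_N$ are never bounded below on an infinite dimensional subspace. It is precisely here that both the $c_0$ saturation of the $X_N$ and the absence of a $c_0$ spreading model in $\X$ (through the trichotomy) are genuinely used. The two points I would check most carefully are the passage from a $c_0$ block sequence in $X_N$ to the corresponding block sequence in $\X$, and the fact that the lower $\ell_1$ estimate coming from the order one Schreier family has a constant independent of the length $m$, so that it truly contradicts the $c_0$ behavior.
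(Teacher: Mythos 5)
Your proof is correct and follows essentially the same route as the paper: your non embedding lemma is exactly the paper's observation that the operators $P_N:\X\rightarrow X_N$ are strictly singular, deduced as you do from the $c_0$ saturation of $X_N$ (Proposition \ref{prpc0sat}) together with the impossibility of $c_0$ behavior in $\X$ supplied by the trichotomy of Propositions \ref{supnorm}, \ref{pnzero} and \ref{prppnnotzero}, after which both arguments diagonalize to produce a normalized block sequence satisfying the hypothesis of Proposition \ref{pnzero}. You merely make explicit the countable support, gliding hump and perturbation-to-block reductions that the paper leaves implicit, including the point (which you rightly flag) that the $c_0$ sequence should be taken inside $Z$ so that the norm equivalence applies before perturbing to disjointly supported blocks in $\|\cdot\|$.
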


\begin{proof}
It is an immediate consequence of Proposition \ref{prpell1smwn} that $\X$ does not contain a copy of $c_0$. By Proposition \ref{prpc0sat}, the spaces $X_n$ are $c_0$ saturated and therefore, the operators $P_n:\X\rightarrow X_n$, are strictly singular.

We conclude, that in any infinite dimensional subspace $Y$ of $\X$, $n_0\inn$ and $\e>0$, there exists $x\in Y$ with $\|x\| = 1$ and $\|P_nx\|_n < \e$ for $n=1,\ldots,n_0$. One may easily construct a normalized sequence in $Y$, satisfying the assumption of Proposition \ref{pnzero}, which completes the proof.
\end{proof}

In particular, the previous result yields that neither $c_0$ nor $\ell_1$ embed into $\X$. Using James' well known theorem for spaces with an unconditional basis, we conclude the following.

\begin{cor}
The space $\X$ is reflexive.\label{correflexive}
\end{cor}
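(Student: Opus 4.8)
The plan is to deduce reflexivity from James' classical characterization: a Banach space with an unconditional basis is reflexive if and only if it contains no isomorphic copy of $c_0$ and no isomorphic copy of $\ell_1$. Since $\{\tilde{e}_\xi\}_{\xi<\cont}$ is a suppression unconditional basis of $\X$, as observed following Definition \ref{defx}, it suffices to rule out both of these copies.

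The non-embedding of $c_0$ and $\ell_1$ I would obtain from Proposition \ref{prpTsat}, which asserts that $\X$ is saturated with subspaces of Tsirelson space $T$; that is, every infinite dimensional closed subspace of $\X$ contains a further infinite dimensional subspace isomorphic to a subspace of $T$. The defining property of $T$ (see \cite{FJ}, \cite{T}) is precisely that $T$ contains no isomorphic copy of $c_0$ or of $\ell_p$, in particular none of $c_0$ or $\ell_1$. I would combine these two facts as follows. Suppose, toward a contradiction, that $c_0$ (respectively $\ell_1$) embeds into $\X$, and let $Y$ be the corresponding subspace. By saturation, $Y$ contains a subspace $Z$ isomorphic to a subspace of $T$. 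But $Z$ is then an infinite dimensional subspace of a copy of $c_0$ (respectively $\ell_1$), and since every infinite dimensional subspace of $c_0$ (respectively $\ell_1$) contains an isomorphic copy of the whole space, $Z$, and hence $T$, would contain a copy of $c_0$ (respectively $\ell_1$), contradicting the defining property of $T$. Therefore neither $c_0$ nor $\ell_1$ embeds into $\X$.

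Having established that $\X$ has an unconditional basis and contains no copy of $c_0$ or $\ell_1$, James' theorem yields that $\X$ is reflexive, completing the proof. I do not expect a genuine obstacle here: the substantive work has already been carried out in Proposition \ref{prpTsat} (and, for the $c_0$ part, already in Proposition \ref{prpell1smwn}, which alone forbids $c_0$, since a $c_0$-basis is weakly null yet generates a $c_0$, and not an $\ell_1^n$, spreading model). The only point requiring minor care is the elementary fact that every infinite dimensional subspace of $c_0$, and of $\ell_1$, reproduces the ambient space; this is what converts the saturation statement of Proposition \ref{prpTsat} into the outright non-embedding statement needed to apply James' theorem.
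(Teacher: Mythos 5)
Your proposal is correct and is essentially the paper's own argument: the paper likewise deduces from Proposition \ref{prpTsat} that neither $c_0$ nor $\ell_1$ embeds into $\X$ and then invokes James' theorem for spaces with an unconditional basis. Your added remark on why saturation by subspaces of $T$ forbids copies of $c_0$ and $\ell_1$ (via the fact that every infinite dimensional subspace of $c_0$ or $\ell_1$ contains a copy of the whole space) is exactly the implicit step the paper leaves to the reader.
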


\begin{rmk}
As is well known, (see \cite{AKT}, Lemma 37), if $\{x_k\}_k$ is a normalized weakly null sequence in a Banach space $X$ and $x\in X$, then $\{x_k\}_k$ admits an $\ell_1$ spreading model, if and only if $\{x_k - x\}_k$ admits an $\ell_1$ spreading model as well. Since $\X$ is reflexive and every normalized weakly null sequence admits an $\ell_1$ spreading model, we conclude that any bounded sequence in $\X$, without a norm convergent subsequence, admits an $\ell_1$ spreading model. In other words, every spreading model admitted by $\X$, is either trivial or equivalent to the usual basis of $\ell_1$.
\end{rmk}

\section{Spaces admitting $\ell_1^\al$ as a unique spreading model}

The goal of the present section, is to give an outline of the construction, for a given countable ordinal number $\al$, of a non separable reflexive space $\mathfrak{X}_{\cont}^\al$, having the following property. Every normalized weakly null sequence in $\mathfrak{X}_{\cont}^\al$ has a subsequence that generates an $\ell_1^\al$ spreading model.

\begin{dfn}

Let $\al$ be a countable ordinal number. Define $\|\cdot\|_{T_\al}$ to be the unique norm on $c_{00}(\mathbb{N})$ that satisfies the following implicit formula, for every $x\in c_{00}(\mathbb{N})$.

\begin{equation*}
\|x\|_{T_\al} = \max\big\{\|x\|_\infty,\; \frac{1}{2}\sup\sum_{i=1}^d\|E_ix\|_{T_\al}\big\}
\end{equation*}
where the supremum is taken over all $E_1<\cdots<E_d$ subsets of the natural numbers with $\{\min E_i:\; i=1,\ldots,d\}\in\mathcal{S}_\al$.

Define the Tsirelson space of order $\al$, denoted by $T_\al$, to be the completion of $c_{00}(\mathbb{N})$ with the aforementioned norm.
\end{dfn}

The space $T_\al$ is reflexive and the unit vector basis $\{e_n\}_n$, forms a 1-unconditional basis for $T_\al$. Moreover, every normalized weakly null sequence in $T_\al$, has a subsequence that generates an $\ell_1^\al$ spreading model. For more details see \cite{AT}.

Given a countable ordinal number $\al$, we shall construct $\{\mathcal{G}_n^\al\}_n$ an increasing sequence of families of finite subsets of $\cantor$, strongly related to $\{\mathcal{G}_n\}_n$. As before, we first define some auxiliary families $\mathscr{G}_n^\al$, $n\inn$.

\begin{dfn}
We define $\mathscr{G}_1^\al$ to be all pairs $(F,\si)$, where $F = \{\tau_i\}_{i=1}^d\in[\cantor]^{<\omega}$, $d\inn$ and $\si\in\cantor$, such that the following are satisfied.

\begin{itemize}

\item[(i)] $\si \neq \tau_i$ for $i=1,\ldots,d$

\item[(ii)] $\si\wedge\tau_1\neq\varnothing$ and if $d>1$, then $\si\wedge\tau_1 \cneq \si\wedge\tau_2 \cneq \cdots \cneq \si\wedge\tau_d$

\item[(iii)] $\{|\si\wedge\tau_i|:\; i=1,\ldots,d\}\in\mathcal{S}_\al$

\end{itemize}
Define $\varmin(F,\si) = |\si\wedge\tau_1|$ and $\varmax(F,\si) = |\si\wedge\tau_d|$.\label{defg1al}
\end{dfn}

Assume that $n\inn$, $\mathscr{G}_{k}^\al$ have been defined for $k\leqslant n$ and that for $(F,\si)\in\mathscr{G}_k^\al$, $\varmin(F,\si)$ and $\varmax(F,\si)$ have also been defined.

\begin{dfn}
Let $(F_i,\si_i)_{i=1}^d$, $d\inn$ be a finite sequence of elements of $\mathscr{G}_{n}^\al$ and $\si\in\cantor$.

We say that $(F_i,\si_i)_{i=1}^d$ {\em is a skipped branching of $\si$ in $\mathscr{G}_n^\al$}, if the following are satisfied.

\begin{itemize}

\item[(i)] The $F_i, i=1,\ldots,d$ are pairwise disjoint

\item[(ii)] $\si\neq\si_i$ for $i=1,\ldots,d$

\item[(iii)] $\si\wedge\si_1\neq\varnothing$ and if $d>1$, then $\si\wedge\si_1 \cneq \si\wedge\si_2 \cneq \cdots \cneq \si\wedge\si_d$

\item[(iv)] $|\si\wedge\si_i| < \varmin(F_i,\si_i)$ for $i=1,\ldots,d$

\item[(v)] $\{|\si\wedge\si_i|:\;i=1,\ldots,d\}\in\mathcal{S}_\al$

\end{itemize}\label{defskippedal}
\end{dfn}

\begin{dfn}
Let  $\si\in\cantor$ and $(F_i,\si)_{i=1}^d$, $d\inn$ be a finite sequence of elements of $\mathscr{G}_{n}^\al$.

We say that $(F_i,\si)_{i=1}^d$ {\em is an attached branching of $\si$ in $\mathscr{G}_{n}^\al$} if the following are satisfied.

\begin{itemize}

\item[(i)] The $F_i, i=1,\ldots,d$ are pairwise disjoint

\item[(ii)] If $d>1$, then $\varmax(F_i,\si) < \varmin(F_{i+1},\si)$, for $i=1,\ldots,d-1$

\item[(iii)] $\{\varmin(F_i,\si):\;i=1,\ldots,d\}\in\mathcal{S}_\al$

\end{itemize}\label{defattachedal}

\end{dfn}

We are now ready to define $\mathscr{G}_{n+1}^\al$.

\begin{dfn}
We define $\mathscr{G}_{n+1}^\al$ to be all pairs $(F,\si)$, where $F\in[\cantor]^{<\omega}$ and $\si\in\cantor$, such that one of the following is satisfied.

\begin{itemize}

\item[(i)] $(F,\si)\in\mathscr{G}_{n}^\al$.

\item[(ii)] There is $(F_i,\si_i)_{i=1}^d$ a skipped branching of $\si$ in $\mathscr{G}_{n}^\al$ such that $F = \cup_{i=1}^d F_i$.

In this case we say that $(F,\si)$ {\em is skipped}. Moreover set $\varmin(F,\si) = |\si\wedge\si_1|$ and $\varmax(F,\si) = |\si\wedge\si_d|$.

\item[(iii)] There is $(F_i,\si)_{i=1}^d$ an attached branching of $\si$ in $\mathscr{G}_{n}^\al$ such that $F = \cup_{i=1}^d F_i$.

In this case we say that $(F,\si)$ {\em is attached}. Moreover set $\varmin(F,\si) = \varmin(F_1,\si)$ and $\varmax(F,\si) = \varmax(F_d,\si)$.

\end{itemize}
\label{defgnal}
\end{dfn}

\begin{dfn}
For a countable ordinal number $\al$ and $n\inn$ we define
\begin{equation*}
\mathcal{G}_n^\al = \{F\subset\cantor:\;\text{there exists}\;\si\in\cantor\;\text{with}\;(F,\si)\in\mathscr{G}_n^\al\}\cup\{\varnothing\}
\end{equation*}\label{defggnal}
\end{dfn}

\begin{rmk}
It is clear that for $\al = 1$, $\mathscr{G}_n^\al = \mathscr{G}_n$, for every $n\inn$. Moreover, for a countable ordinal number $\al$, every result stated for $\mathscr{G}_n$, up to Proposition \ref{prphereditary}, holds also for $\mathscr{G}_n^\al$ and the proofs are identical. On the other hand, if for $n\inn$ we denote by $\mathcal{S}_\al^n$ the convolution of $\mathcal{S}_\al$ with itself $n$ times, Proposition \ref{prpnlarge} can be restated as follows.\label{rmkidentical}
\end{rmk}

\begin{prp}
Let $\al$ be a countable ordinal number. Then  for every $B$ infinite subset of $\cantor$ there exists a one to one map $\phi:\mathbb{N}\rightarrow B$ with $\phi(F)\in\mathcal{G}_n^\al$ for every $F\in\mathcal{S}_\al^n$ and $n\inn$.\label{prpalnlarge}
\end{prp}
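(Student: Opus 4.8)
The plan is to mimic the proof of Proposition \ref{prpnlarge}, replacing the transfinite induction on $\al$ by a finite induction on $n$ and exploiting the convolution structure of $\mathcal{S}_\al^n$ at each step. First I would fix, exactly as in Proposition \ref{prpnlarge}, pairwise disjoint elements $\{\tau_k\}_k$ of $B$ together with $\si\in\cantor$ such that $\lim_k\tau_k = \si$ and $\si\wedge\tau_k \cneq \si\wedge\tau_{k+1}$ for every $k\inn$, and set $\phi(k) = \tau_k$. Since $\si\wedge\tau_1 \neq\varnothing$ and the sequence $\{\si\wedge\tau_k\}_k$ is strictly $\cneq$-increasing, the numbers $|\si\wedge\tau_k|$ strictly increase, so $|\si\wedge\tau_k| \geq k$ for all $k$. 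This inequality is the bridge between the Schreier condition imposed on index sets and the Schreier condition imposed on the ``splitting depths'' $|\si\wedge\tau_k|$ that enters the definition of the families $\mathscr{G}_n^\al$.

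The statement I would prove by induction on $n\inn$ is: for every $F\in\mathcal{S}_\al^n$ one has $(\phi(F),\si)\in\mathscr{G}_n^\al$, with $\varmin(\phi(F),\si) = |\si\wedge\tau_{\min F}|$ and $\varmax(\phi(F),\si) = |\si\wedge\tau_{\max F}|$; the proposition then follows by letting $n$ range over $\mathbb{N}$. For the base case $n=1$, write $F = \{k_1<\cdots<k_d\}$, so that $\phi(F) = \{\tau_{k_1},\ldots,\tau_{k_d}\}$. Conditions (i) and (ii) of Definition \ref{defg1al} are immediate from $\tau_{k_i}\neq\si$ and from the choice of $\{\tau_k\}_k$, while condition (iii) reads $\{|\si\wedge\tau_{k_i}|:\,i=1,\ldots,d\}\in\mathcal{S}_\al$; this follows from $\{k_i\}\in\mathcal{S}_\al$ together with $|\si\wedge\tau_{k_i}|\geq k_i$ and the spreading property of $\mathcal{S}_\al$.

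For the inductive step I would decompose $F\in\mathcal{S}_\al^{n+1}$, using the definition of the convolution, as $F = \cup_{i=1}^d F_i$ with $F_1<\cdots<F_d$ successive, each $F_i\in\mathcal{S}_\al^n$, and $\{\min F_i:\,i=1,\ldots,d\}\in\mathcal{S}_\al$. The inductive hypothesis gives $(\phi(F_i),\si)\in\mathscr{G}_n^\al$ together with $\varmin(\phi(F_i),\si) = |\si\wedge\tau_{\min F_i}|$ and $\varmax(\phi(F_i),\si) = |\si\wedge\tau_{\max F_i}|$. I would then verify that $(\phi(F_i),\si)_{i=1}^d$ is an attached branching of $\si$ in the sense of Definition \ref{defattachedal}: disjointness holds since $\phi$ is one to one and the $F_i$ are successive; condition (ii) holds because $\max F_i<\min F_{i+1}$ forces $|\si\wedge\tau_{\max F_i}|<|\si\wedge\tau_{\min F_{i+1}}|$; and condition (iii), namely $\{\varmin(\phi(F_i),\si)\} = \{|\si\wedge\tau_{\min F_i}|\}\in\mathcal{S}_\al$, again follows from $\{\min F_i\}\in\mathcal{S}_\al$, the inequalities $|\si\wedge\tau_{\min F_i}|\geq \min F_i$, and the spreading property. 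By Definition \ref{defgnal}(iii) this yields $(\phi(F),\si)\in\mathscr{G}_{n+1}^\al$, and the prescribed values of $\varmin$ and $\varmax$ are read off from the first and last pieces, closing the induction.

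I expect the main obstacle to be purely organizational: correctly matching the convolution decomposition of $\mathcal{S}_\al^n$ with the attached-branching formation of $\mathscr{G}_{n+1}^\al$, and invoking the spreading property of $\mathcal{S}_\al$ at precisely the two points (base-case (iii) and inductive condition (iii)) where the index-set Schreier condition must be transferred to the depth sequence $\{|\si\wedge\tau_k|\}_k$ via $|\si\wedge\tau_k|\geq k$. Crucially, no limit-ordinal case arises, since the induction is on the natural number $n$ with $\al$ fixed; this is exactly what turns the argument into a direct adaptation of the successor step of Proposition \ref{prpnlarge}.
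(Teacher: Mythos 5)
Your proof is correct and is precisely the argument the paper intends: the paper gives no separate proof of Proposition \ref{prpalnlarge}, stating in Remark \ref{rmkidentical} that it is the restatement of Proposition \ref{prpnlarge} with identical proof, and your finite induction on $n$ via attached branchings is exactly that adaptation. You also correctly supply the one genuinely new detail, namely transferring the Schreier conditions in Definitions \ref{defg1al} and \ref{defattachedal} from the index sets to the depths $|\si\wedge\tau_k|$ using $|\si\wedge\tau_k|\geqslant k$ and the spreading property of $\mathcal{S}_\al$, which replaces the mere cardinality bound used in the original proof.
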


Theorem \ref{thmcompact} takes the following form and the proof uses the compactness of $\mathcal{S}_\al$ and Corollary \ref{corcomal}.

\begin{thm}
Let $\al$ be a countable ordinal number and $n\inn$. Then $\mathcal{G}_n^\al$ is an $\al$-large, hereditary and compact family of finite subsets of $\cantor$.\label{thmcompactal}
\end{thm}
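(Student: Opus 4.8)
The plan is to dispose of the two soft properties immediately and then devote all the work to compactness, which is the only substantial point. Hereditarity of $\mathcal{G}_n^\al$ is automatic: by Remark \ref{rmkidentical} the $\al$-analogue of Proposition \ref{prphereditary} holds, so $(F,\si)\in\mathscr{G}_n^\al$ and $\varnothing\neq G\subset F$ give $(G,\si)\in\mathscr{G}_n^\al$. For $\al$-largeness, Proposition \ref{prpalnlarge} furnishes, for every infinite $B\subset\cantor$, a one to one $\phi:\mathbb{N}\to B$ with $\phi(F)\in\mathcal{G}_n^\al$ for all $F\in\mathcal{S}_\al^n$; since $\mathcal{S}_\al\subset\mathcal{S}_\al^n$ (an element of $\mathcal{S}_\al$ is a one-block convolution), the same $\phi$ witnesses $\al$-largeness. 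Hence it remains only to prove that each $\mathcal{G}_n^\al$ is compact, and I would do so by induction on $n$, following the template of Theorem \ref{thmcompact} but inducting on $n$ rather than transfinitely on the order, and replacing every appeal to the crude bound $\#F\leqslant\varmin$ by membership of an index set in $\mathcal{S}_\al$ together with compactness of $\mathcal{S}_\al$.

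Since $\mathcal{G}_n^\al$ is hereditary with finite members, it suffices to show that no infinite $F\subset\cantor$ has all of its finite subsets in $\mathcal{G}_n^\al$. Assume otherwise; enumerate $F=\{\tau_i\}_i$, set $F_k=\{\tau_1,\ldots,\tau_k\}$, and choose $(F_k,\si_k)\in\mathscr{G}_n^\al$. The $\al$-analogue of Corollary \ref{corminbound} gives $\varmin(F_k,\si_k)\leqslant|\tau_1\wedge\tau_2|=:d$ for all $k$. For the base case $n=1$ I would first pass, using compactness of $\cantor$, to a subsequence $\{\rho_m\}$ of $\{\tau_i\}$ with $\rho_m\to\si^\ast$ and $c_m:=|\si^\ast\wedge\rho_m|$ strictly increasing. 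Given a witness $\si_k$ of $\{\rho_1,\ldots,\rho_k\}$ and writing $e=|\si_k\wedge\si^\ast|$, Lemma \ref{littlelemma} forces $|\si_k\wedge\rho_m|=c_m$ whenever $c_m<e$; as the meet-lengths in Definition \ref{defg1al}(ii) are pairwise distinct, all but at most two of $c_1<\cdots<c_k$ occur among them, so the initial segment $\{c_1,\ldots,c_{k-2}\}$ lies in the $\mathcal{S}_\al$-set of Definition \ref{defg1al}(iii), hence in $\mathcal{S}_\al$. Letting $k\to\infty$, every initial segment of $\{c_m\}_m$ belongs to $\mathcal{S}_\al$, so $\{c_m\}_m\in\overline{\mathcal{S}_\al}=\mathcal{S}_\al$, an infinite member of the compact family $\mathcal{S}_\al$, which is absurd.

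For the inductive step, assuming $\mathcal{G}_n^\al$ compact, I would examine the witnesses $(F_k,\si_k)\in\mathscr{G}_{n+1}^\al$ and, after passing to a subsequence, arrange that they are all of one type: in $\mathscr{G}_n^\al$, all skipped, or all attached. The first alternative forces $F\in\overline{\mathcal{G}_n^\al}$, contradicting the inductive hypothesis. In the skipped (resp. attached) case write $F_k=\cup_{i=1}^{d_k}F_i^k$ for the corresponding branching; the index set $\{|\si_k\wedge\si_i^k|:i\}$ (resp. $\{\varmin(F_i^k,\si_k):i\}$) belongs to $\mathcal{S}_\al$ and has minimum $\varmin(F_k,\si_k)\leqslant d$. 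Using compactness of $\mathcal{S}_\al$ I pass to a subsequence along which this index set converges to a finite set, and by a diagonal choice of thresholds $T_k\to\infty$ I split $F_k=L_k\sqcup R_k$, where $L_k$ collects the finitely many (say $p$) pieces whose index stays at most $T_k$ and $R_k$ the remaining pieces. By Lemma \ref{lemminmaxbetween} and Definitions \ref{defskippedal}, \ref{defattachedal}, every pairwise meet inside $R_k$ exceeds $T_k$, so $\varmin(R_k,\si_k)\to\infty$ and the $\al$-analogue of Corollary \ref{corcomal} shows that every limit point of $\{R_k\}$ has at most one element; meanwhile $L_k$ consists of at most $p$ sets in the compact family $\mathcal{G}_n^\al$, so a further subsequence makes each $F_i^k$ ($i\leqslant p$) converge to a finite $G_i\in\mathcal{G}_n^\al$. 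Splitting $F$ into the elements lying in $L_k$ infinitely often (contained in the finite set $\cup_{i\leqslant p}G_i$) and those eventually in $R_k$ (at most one, by Corollary \ref{corcomal}), I conclude that $F$ is finite, the desired contradiction.

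The \emph{main obstacle} is precisely what distinguishes this from Theorem \ref{thmcompact}: the number $d_k$ of branches is no longer bounded by $\varmin(F_k,\si_k)$, but is controlled only indirectly through membership of the index set in $\mathcal{S}_\al$. The crux is therefore to turn compactness of $\mathcal{S}_\al$ into a genuine separation of the branches into a bounded ``low'' part, to which the inductive hypothesis applies, and a ``high'' tail whose minimal split diverges, to which Corollary \ref{corcomal} applies. Arranging the threshold split compatibly and checking that the low part really involves only finitely many branches (so that compactness of $\mathcal{G}_n^\al$ may be invoked) is where the argument demands the most care; the base case is cleaner because there the branches are singletons and a single appeal to compactness of $\mathcal{S}_\al$ already closes the argument.
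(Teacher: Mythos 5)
Your proof is correct and follows precisely the route the paper indicates for Theorem \ref{thmcompactal}: induction on $n$ modeled on Theorem \ref{thmcompact}, with the cardinality bound $\#F\leqslant\varmin(F,\si)$ of Remark \ref{rmkdmin} replaced by membership of the index sets of Definitions \ref{defg1al}--\ref{defattachedal} in the compact family $\mathcal{S}_\al$, and with Corollary \ref{corcomal} (valid for $\mathscr{G}_n^\al$ by Remark \ref{rmkidentical}) disposing of the branches whose $\varmin$ diverges. Since the paper gives no detailed proof beyond the remark that ``the proof uses the compactness of $\mathcal{S}_\al$ and Corollary \ref{corcomal},'' your threshold-splitting argument is a correct and complete realization of exactly that sketch.
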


In order to define the desired space $\X^\al$, one takes the same steps as in the previous section. All proofs are identical.

\begin{dfn}
For $\al$ a countable ordinal number and $n\inn$ define a norm on $c_{00}(\cont)$ in the following manner.
\begin{itemize}

\item[(i)]
For $n\inn$, we may identify an $F\in\mathcal{G}_n^\al$ with a linear functional $F:c_{00}(\cont)\rightarrow\mathbb{R}$ in the following manner. For $x = \sum_{\xi<\cont}\la_\xi e_\xi \in c_{00}(\cont)$
\begin{equation*}
F(x) = \sum_{\tau\in F}\la_{\xi_\tau}
\end{equation*}

\item[(ii)]
For $x\in c_{00}(\cont)$ define
\begin{equation*}
\|x\|_n^\al = \sup\{|F(x)|: F\in\mathcal{G}_n^\al\}
\end{equation*}
Set $X_n^\al$ to be the completion of $(c_{00}(\cont), \|\cdot\|_n^\al)$.
\end{itemize}\label{defxnal}

\end{dfn}

\begin{dfn}
Define the following norm on $c_{00}(\cont)$. For $x\in c_{00}(\cont)$
\begin{equation*}
\|x\| = \big\|\sum_{n=1}^\infty\frac{1}{2^n}\|x\|_n^\al e_n\big\|_{T_\al}
\end{equation*}
Set $\X^\al$ to be the completion of $(c_{00}(\cont), \|\cdot\|)$.\label{defxal}
\end{dfn}

\begin{thm}
The space $\X^\al$ is a non separable reflexive space with a suppression unconditional Schauder basis with the length of the continuum, having the following property. Every normalized weakly null sequence in $\X^\al$ has a subsequence that generates an $\ell_1^\al$ spreading model.\label{mainthmal}
\end{thm}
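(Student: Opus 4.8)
The plan is to follow the scheme used for the space $\X$ in Section 3, relying on the crucial observation in Remark \ref{rmkidentical} that all the structural lemmas about the families $\mathscr{G}_n^\al$ are proved exactly as before, and on Theorem \ref{thmcompactal} which guarantees that each $\mathcal{G}_n^\al$ is an $\al$-large, hereditary and compact family. The reflexivity of $\X^\al$ will be established precisely as for $\X$: since each $X_n^\al$ is $c_0$-saturated (the analogue of Proposition \ref{prpc0sat}, following from scatteredness of $\mathcal{G}_n^\al$ and the embedding into $C(\mathcal{G}_n^\al)$), the projections $P_n:\X^\al\to X_n^\al$ are strictly singular; combined with the fact that $\X^\al$ admits no $c_0$ spreading model, one shows every subspace contains a sequence equivalent to a block sequence of $T_\al$, and then James' theorem for unconditional bases gives reflexivity. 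The unconditionality and non-separability of the basis $\{\tilde e_\xi\}_{\xi<\cont}$ follow as before from the hereditariness of the $\mathcal{G}_n^\al$ and the $1$-unconditionality of the basis of $T_\al$.

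The substantive part is the spreading model statement. First I would reduce, exactly as in Propositions \ref{supnorm}, \ref{pnzero} and \ref{prppnnotzero}, to the case of a normalized disjointly supported block sequence $\{x_k\}_k$ of the basis, and split according to the behavior of $\|x_k\|_\infty$ and of $\lim\sup_k\|P_nx_k\|_n^\al$. The case $\lim\sup_k\|x_k\|_\infty>0$ is handled by the analogue of Lemma \ref{lembasisell1}: here Proposition \ref{prpalnlarge} supplies, for each infinite $B\subset\cantor$, a one-to-one $\phi:\mathbb N\to B$ with $\phi(F)\in\mathcal{G}_n^\al$ for every $F\in\mathcal{S}_\al^n$, which yields a lower $\ell_1^{\mathcal{S}_\al^n}$ estimate on the basis; feeding these through the norm of $T_\al$ produces an $\ell_1^\al$ spreading model. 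The case $\lim_k\|P_nx_k\|_n^\al=0$ for all $n$ reduces, via the sliding hump argument of Proposition \ref{pnzero}, to a block sequence of $T_\al$, whose spreading models are $\ell_1^\al$ by the stated properties of $T_\al$.

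The heart of the matter, and the main obstacle, is the remaining case $\lim_k\|x_k\|_\infty=0$ with $n_0=\min\{n:\lim\sup_k\|P_nx_k\|_n^\al>0\}$ finite, which must reproduce the dichotomy of Proposition \ref{prppnnotzero}. I would invoke the analogue of Lemma \ref{lempnnotzero} to extract $(F_k,\si_k)\in\mathscr{G}_{n_0}^\al$ with $|F_k(x_k)|>c$ and the $F_k$ pairwise disjoint, pass to $\si=\lim_k\si_k$, and split into the skipped and attached cases via the limit of $\max\{|G(x_k)|:G\subset F_k,\ (G,\si)\in\mathscr{G}_{n_0}^\al\}$. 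The delicate point is the analogue of Lemmas \ref{lemelloneskipped} and \ref{lemelloneattached}: one must verify that when $G\in\mathcal{S}_\al^n$ indexes a skipped (resp. attached) branching, the union $\cup_{k\in G}F_k$ lands in $\mathcal{G}_{n_0+n}^\al$. This is where the Schreier-family conditions (iii)/(v) in Definitions \ref{defg1al}, \ref{defskippedal}, \ref{defattachedal} replace the cardinality conditions of Section 2, and where the convolution structure $\mathcal{S}_\al^n$ enters: the induction on $n$ must track that admissibility with respect to $\mathcal{S}_\al$ at each of the $n$ stages composes to give membership of the relevant index set in $\mathcal{S}_\al^n$. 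Once this combinatorial bookkeeping is in place, the lower bound $\|\sum_{k\in G}\la_k x_k\|\geq (2/2^{n_0+n})\,c\sum_{k\in G}\la_k$ for $G\in\mathcal{S}_\al^n$, combined with the $T_\al$-norm, yields the $\ell_1^\al$ spreading model, completing the proof.
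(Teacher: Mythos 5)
Your proposal is correct and follows essentially the same route as the paper, which for Theorem \ref{mainthmal} simply declares that one repeats the Section 3 construction verbatim with $\mathcal{G}_n^\al$, $X_n^\al$ and $T_\al$ in place of $\mathcal{G}_n$, $X_n$ and $T$, the only new ingredients being Proposition \ref{prpalnlarge} (with the convolutions $\mathcal{S}_\al^n$) and Theorem \ref{thmcompactal}. In fact you supply more detail than the paper does, correctly isolating the one point where the arguments are not literally identical: the $\mathcal{S}_\al$-admissibility conditions in Definitions \ref{defskippedal} and \ref{defattachedal} replacing the cardinality bounds, and the bookkeeping (via the spreading property of $\mathcal{S}_\al$) needed in the analogues of Lemmas \ref{lemelloneskipped} and \ref{lemelloneattached}.
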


\end{document}